\newcommand{\Z}{{\mathbb Z}} 
\newcommand{\Q}{{\mathbb Q}}
\newcommand{\R}{{\mathbb R}}
\newcommand{\F}{{\mathbb F}}
\newcommand{\Char}{\operatorname{char}}
\newfont{\cyr}{wncyb10}
\newcommand{\sha}{\mbox{\cyr Sh}}
\newcommand{\GCD}{gcd }
\newtheorem{lemma}{Lemma}
\newtheorem{prop}{Proposition}
\newtheorem{thm}{Theorem}
\newtheorem{thm*}{Theorem}
\newtheorem{cor}{Corollary}
\theoremstyle{definition}
\newtheorem{example}{Example}
\newtheorem{exercise}{Exercise}
\theoremstyle{remark}
\newtheorem*{remark}{Remark}
\begin{document}

\title {Counterexamples to the Hasse principle}
\author{W. Aitken, F. Lemmermeyer}



\maketitle

\section{Introduction}\label{s0}

In this article we develop counterexamples to the Hasse principle
using only techniques from undergraduate
number theory and algebra. By keeping the technical prerequisites to a minimum, we hope
to provide a path for nonspecialists to this interesting area of number theory.
The counterexamples considered here extend the classical counterexample of Lind and Reichardt.
As discussed in an appendix, this type of counterexample is
important in the theory of elliptic
curves: today they are interpreted as nontrivial elements
in the Tate--Shafarevich group.


\section{Background}\label{s1}

The problem of determining if the Diophantine equation
\begin{equation}\label{EL}
    aX^2 + bY^2 + cZ^2 = 0
\end{equation}
has nontrivial solutions with values in~$\Z$
has played a prominent role in the 
history of number theory.
We assume that $a, b,$ and $c$ are nonzero
integers and,
using a simple argument, we
reduce to the case where the product
$a b c$ is square-free.
Lagrange (1768) solved the problem by giving a descent procedure
which determines in a finite number of steps whether or not (\ref{EL}) has
a nontrivial $\Z$-solution, but Legendre (1788) gave the definitive solution.
Legendre proved that the following conditions,  known
by Euler to be necessary, are sufficient
for the existence of a nontrivial~$\Z$-solution:
\emph{(i) $a$, $b$, and $c$ do not all have the same sign,
and  (ii) $-ab$ is a square modulo~$|c|$,
$-ca$ is a square modulo~$|b|$,
and $-bc$ is a square modulo~$|a|$.}
Legendre then made interesting use of
this result in the first
attempted proof of quadratic reciprocity.\footnote{Legendre's proof of quadratic reciprocity
had gaps. The first complete proof was given by Gauss
in his \emph{Disquisitiones Arithmeticae} (1801).
Equation~(\ref{EL}) also figures prominently in 
the \emph{Disquisitiones}~\cite[Art.~294--300]{Gauss};
Gauss proves
Legendre's theorem on the solvability of~(\ref{EL}) using his theory of ternary quadratic forms.
He then discusses the gaps in Legendre's proof of quadratic reciprocity.

For a proof of Legendre's
theorem on the solvability of (\ref{EL}) based on Lagrange's descent see
\cite[Ch.~VII \S 3]{Davenport},
\cite[Ch.~17 \S 3]{IR}, or
\cite[Ch.~II \S XIV, Ch.~IV Appendix I]{Weil}
(which gives historical background including Lagrange's
role in the solution to the problem).
For more on Legendre's theorem see
\cite[Ex.~1.8, 2.36]{LRL},
and various books on Diophantine equations.
Lagrange's descent gives an explicit method for finding a solution
if it exists; see Cremona and Rusin \cite{CR} for practical
improvements on the descent method.

Equation~(\ref{EL}) arises more often than
it might at first appear.
If $F \in \Z[X, Y, Z]$ is a homogeneous quadratic
polynomial, then $F(X, Y, Z)=0$
can be transformed to the form~(\ref{EL})
(see \cite[Thm.~$1'$, Ch.~IV]{Ser}).
As a consequence, the
problem of determining if a conic, defined over~$\Q$, has a rational
point reduces in a straightforward manner to the
solvability of~(\ref{EL}).}

There was a large interest in generalizing Legendre's result to
quadratic forms in arbitrarily many variables. Hasse's solution
(1923) was formulated in a very elegant way using the $p$-adic
numbers developed earlier by his teacher Hensel.

To explain Hasse's result we will need to fix some terminology.
 A \emph{homogeneous polynomial} of degree~$d$
is a sum of monomials that are all of total degree $d$; such
polynomials are sometimes called
\emph{forms} of degree~$d$.
Consider the  Diophantine equation
\begin{equation}\label{gen}
F(X_1, \ldots, X_m) = 0
\end{equation}
where $F  \in \Z[X_1, \ldots, X_m]$
is a homogeneous polynomial of positive degree~$d$.
The $m$-tuple $(0, \ldots, 0)$ is a solution, but not an interesting one.
The $m$-tuple $(a_1, \ldots,  a_m)$ is
called \emph{nontrivial} if at least one $a_i$ is nonzero.
We are interested in finding necessary and sufficient
conditions for the existence of nontrivial integer solutions to~(\ref{gen}).
A nontrivial $m$-tuple
$(a_1, \ldots,  a_m)\in \Z^m$
is said to be \emph{primitive} if the greatest common divisor
of $a_1, \ldots, a_m$ is~$1$.
Observe that by homogeneity if (\ref{gen})
has any nontrivial solution (in $\Z^m$, or even~$\Q^m$) it has a
primitive solution.
We extend this terminology in two ways: to systems of homogeneous
polynomial equations, and to solutions modulo~$N$.
For example, a \emph{primitive solution modulo~$N$} is a primitive $m$-tuple of integers
that solves the congruence $F(X_1, \ldots, X_m)\equiv 0$ 
modulo~$N$.\footnote{Any $m$-tuple of integers solving $F(X_1, \ldots, X_m)\equiv 0$ modulo $N$
that is nontrivial with \GCD prime to~$N$
is congruent
to a solution whose \GCD is~$1$.
Thus, if desired, one
can relax the definition of \emph{primitive $m$-tuple modulo~$N$} to
allow any nontrivial $m$-tuple whose \GCD is prime to~$N$. }
For systems and systems modulo~$N$ of homogeneous equations we do not require that the
equations have the same degree.

An easy way to show that~(\ref{gen}) has no
nontrivial $\Z$-solution is to show that it has no nontrivial
$\R$-solutions.
This trick only establishes the nonsolvability of the most blatant offenders:
any interesting Diophantine equation 
will require some number-theoretic tools.
The next-easiest way to show the nonsolvability of~(\ref{gen}) 
is to show that it fails to have a
primitive solution modulo~$N$ for some positive integer~$N$. What is surprising
is that in degree $2$ these two techniques are all that is needed.

\begin{thm*} [Hasse's Theorem: version 1]
If $F \in \Z[X_1, \ldots, X_m]$
is homogeneous of degree~$2$,
then $F(X_1, \ldots, X_m) = 0$ has a nontrivial $\Z$-solution
if and only if
\begin{enumerate}
\item[(i)]  it has a nontrivial $\R$-solution, and
\item[(ii)] it has a primitive solution modulo~$N$
             for all positive integers~$N$.
\end{enumerate}
\end{thm*}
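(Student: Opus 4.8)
The plan is to reduce Hasse's theorem (version 1) to the classical Legendre theorem stated earlier, via Gauss's reduction of an arbitrary quadratic form to a diagonal ternary form $aX^2+bY^2+cZ^2$ with $abc$ squarefree. The "only if" direction is trivial: a nontrivial $\Z$-solution is automatically a nontrivial $\R$-solution and, clearing nothing, reduces modulo every $N$; to get a *primitive* solution modulo $N$ one divides the $\Z$-solution by the gcd of its coordinates first. So the content is entirely in the "if" direction.

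**First I would** observe that conditions (i) and (ii) are insensitive to invertible linear changes of variable over $\Q$ (rescaling to clear denominators affects neither the existence of a nontrivial real zero nor the existence of primitive solutions mod $N$ — here one uses the footnote's remark that a nontrivial solution mod $N$ with gcd prime to $N$ suffices). Hence one may assume $F$ has been diagonalized over $\Q$, $F = a_1 X_1^2 + \dots + a_r X_r^2$ with all $a_i \neq 0$ (the degenerate case $r<m$ is easy: set the extra variables to $0$ and the others so as to force a solution, or note that a zero coefficient already gives a nontrivial $\Z$-solution). **Next** I would dispose of the easy arities: $r=1$ has no nontrivial solution and indeed fails (ii) mod any prime not dividing $a_1$; $r=2$ amounts to $-a_1/a_2$ being a square, which one shows is forced by (i) and (ii) using that a rational number that is a square in $\R$ and a square modulo every prime power must be a square (an elementary consequence of unique factorization and quadratic reciprocity / the Chinese remainder theorem).

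**The heart of the argument** is the ternary case $r=3$. After scaling I reduce to $aX^2+bY^2+cZ^2=0$ with $abc$ squarefree, and I must show that Legendre's two conditions hold. Condition (i) of Hasse says $a,b,c$ are not all of the same sign — this is exactly Legendre's sign condition (i). For Legendre's condition (ii), I must show $-ab$ is a square mod $|c|$, etc. Here I would take a prime $p \mid c$ (so $p \nmid ab$ since $abc$ is squarefree), feed $N = p$ (or a suitable $p$-power) into Hasse's hypothesis (ii), and extract from a primitive solution mod $p$ the fact that $-ab$ is a square mod $p$: if $p \nmid Z$ in the primitive solution then $aX^2 \equiv -bY^2$ forces it directly, and one checks $p \nmid Z$ is automatic in a primitive solution since $p \mid Z$ together with $aX^2 + bY^2 \equiv 0$ and $p\nmid ab$ would force $p\mid X, Y$ too. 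Assembling this over all $p\mid abc$ via the Chinese remainder theorem gives Legendre's condition (ii) in full. Then Legendre's theorem (quoted in Section~2) supplies the nontrivial $\Z$-solution.

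**The remaining obstacle** is the higher-arity case $r \geq 4$, where there is no Legendre theorem to invoke directly; the standard route is a descent reducing $r$ to $r-1$ or $r-2$. The cleanest version: for $r=4$ write $F = g_1(X_1,X_2) - g_2(X_3,X_4)$ with $g_i$ binary; conditions (i) and (ii) for $F$ let one show that $g_1$ and $g_2$ represent a common nonzero rational value $t$ (the real condition handles signs, the mod-$N$ conditions handle the local square conditions governing which $t$ are representable), and then $g_1 - t W^2$ and $g_2 - t W^2$ are ternary forms to which the $r=3$ case applies; solving both with the same $W$ patches to a solution of $F=0$. For $r\geq 5$ one similarly splits off variables and inducts, noting that conditions (i) and (ii) are inherited by the relevant sub-forms. **I expect the main difficulty** to be making this "common value $t$" step elementary — it is essentially the statement that a quadratic form in $\geq 3$ variables that is locally isotropic at every place is globally isotropic, i.e.\ the full Hasse--Minkowski theorem, so keeping it within undergraduate tools will require care, probably an explicit bound on $t$ and a finite search, in the spirit of Lagrange's descent rather than a slick abstract argument.
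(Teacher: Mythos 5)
First, note that the paper never proves this theorem: it is quoted as Hasse's classical result (1923), with only the special cases $m=3$ (reduced to Legendre's theorem in the end-of-section exercises) and $m=2$ (reduced to Gauss's theorem on squares modulo $p$) discussed. So your proposal is being measured against the literature rather than against an argument in the paper, and as a proof of the full statement it has a genuine gap. The decisive step is exactly the one you flag at the end: for $r\ge 4$ you need the two sub-forms $g_1$ and $g_2$ to represent a common nonzero rational value $t$, and you offer only the hope that this can be made elementary ``by an explicit bound on $t$ and a finite search.'' That hope is not an argument. In the standard treatments (e.g.\ Serre's \emph{Course in Arithmetic}, which the paper cites) this step is where the real work lies: one chooses $t$ by prescribing its square class at the finitely many relevant primes and at $\infty$, and then invokes Dirichlet's theorem on primes in arithmetic progressions (or an equivalent approximation/reciprocity argument) to produce such a $t$; the inductive patching for $r\ge 5$ likewise needs this. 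Without supplying that ingredient, your outline proves the theorem only for $m\le 3$, i.e.\ it reproduces what the paper's exercises already extract from Legendre.

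Two smaller points. In the ternary case your extraction of Legendre's condition from hypothesis (ii) is garbled as stated: from a primitive solution modulo $p$ alone, the case $p\mid X$, $p\mid Y$, $p\nmid Z$ cannot be excluded (your claim that $p\mid Z$ together with $aX^2+bY^2\equiv 0$ and $p\nmid ab$ forces $p\mid X,Y$ is false when $-ab$ is a square, which is what you are trying to prove). The correct move is the one in the paper's exercises: use a primitive solution modulo $p^2$, observe it is ``$p$-focused,'' and then read off that $-ab$ is a square modulo $p$. Second, your opening reduction --- that conditions (i) and (ii) are insensitive to invertible linear changes of variable over $\Q$ --- is true but not free in the mod-$N$ formulation: transporting a primitive solution through $A^{-1}$ introduces denominators and possible common factors with $N$, so one must either work modulo $N\cdot\det(A)^2$ and re-primitivize, or pass to the $\Z_p$/$\Q_p$ formulation (version 3, via the equivalence in Appendix A). These are repairable, but the $r\ge 4$ step is a missing idea, not a detail.
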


The assertion that (i) and (ii) are necessary and sufficient for the
existence of nontrivial solutions is called the \emph{Hasse principle}
for polynomials of degree~$2$.

The Chinese remainder theorem allows us to restate
this result as follows.

\begin{thm*} [Hasse's Theorem: version 2]
If $F \in \Z[X_1, \ldots, X_m]$
is homogeneous of degree~$2$,
then $F(X_1, \ldots, X_m) = 0$ has a nontrivial $\Z$-solution
  if and only if
\begin{enumerate}
\item[(i)]  it has a nontrivial $\R$-solution, and
\item[(ii)] it has a primitive solution modulo $p^k$ for all
             primes~$p$ and exponents~$k\ge 1$.
\end{enumerate}
\end{thm*}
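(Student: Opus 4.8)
The plan is to derive this second version from the first (which we take as already established), since the two statements differ only in condition~(ii): condition~(i) and the conclusion are word-for-word the same. So it suffices to prove that, for a fixed $F\in\Z[X_1,\ldots,X_m]$ homogeneous of degree~$2$, the statement ``$F=0$ has a primitive solution modulo~$N$ for every positive integer~$N$'' is equivalent to the statement ``$F=0$ has a primitive solution modulo~$p^k$ for every prime~$p$ and every $k\ge 1$.'' One implication is immediate, since each prime power $p^k$ is itself a positive integer.

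For the converse, fix a positive integer~$N$ and write $N=p_1^{k_1}\cdots p_r^{k_r}$. By hypothesis, for each $i$ there is a primitive $m$-tuple $(a_1^{(i)},\ldots,a_m^{(i)})$ with $F(a_1^{(i)},\ldots,a_m^{(i)})\equiv 0\pmod{p_i^{k_i}}$. Applying the Chinese remainder theorem to each coordinate separately, I would obtain integers $a_1,\ldots,a_m$ with $a_j\equiv a_j^{(i)}\pmod{p_i^{k_i}}$ for all $i$ and $j$. Since $F$ has integer coefficients, congruent inputs yield congruent outputs, so $F(a_1,\ldots,a_m)\equiv F(a_1^{(i)},\ldots,a_m^{(i)})\equiv 0\pmod{p_i^{k_i}}$ for every~$i$, and therefore $F(a_1,\ldots,a_m)\equiv 0\pmod N$.

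It remains to arrange primitivity. For each~$i$, because $(a_1^{(i)},\ldots,a_m^{(i)})$ is primitive modulo $p_i^{k_i}$, the prime $p_i$ does not divide all of its entries, hence it does not divide all of $a_1,\ldots,a_m$; as this holds for every prime dividing~$N$, the greatest common divisor $g=\gcd(a_1,\ldots,a_m)$ is prime to~$N$. Replacing $(a_1,\ldots,a_m)$ by $(a_1/g,\ldots,a_m/g)$ gives an $m$-tuple that is primitive, and by homogeneity of degree~$2$ one has $F(a_1/g,\ldots,a_m/g)=F(a_1,\ldots,a_m)/g^2$, which is still divisible by~$N$ precisely because $\gcd(g,N)=1$. (Alternatively, one may appeal to the relaxed notion of primitivity modulo~$N$ recorded in the footnote above.) This produces a primitive solution modulo~$N$, which establishes the equivalence; the statement of version~2 then follows from version~1.

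The argument is essentially routine; the only step that calls for a little care is the treatment of primitivity — checking that a tuple obtained by gluing together pieces that are primitive modulo the individual prime powers $p_i^{k_i}$ can be made primitive modulo~$N$, rather than merely nontrivial with $\gcd$ prime to~$N$. Dividing out the $\gcd$, which we have verified is coprime to~$N$, disposes of this, and no deeper difficulty arises.
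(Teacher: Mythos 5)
Your argument is correct and matches the paper's approach: the paper derives version~2 from version~1 with the one-line remark that the Chinese remainder theorem permits the restatement, and your proposal simply fills in that CRT gluing, including the careful step of dividing out the $\gcd$ (coprime to~$N$) to recover a genuinely primitive solution modulo~$N$.
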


When a homogeneous Diophantine equation or system of such equations 
has primitive solutions modulo all 
powers $p^k$ of a given prime $p$,
we say that it is \emph{$p$-locally solvable}. 
If it is $p$-locally solvable for every prime $p$ and 
has nontrivial real solutions 
then we say that it is \emph{locally solvable}.
If it has a nontrivial $\Z$-solution then we say that it is
\emph{globally solvable}.
Global solvability clearly implies local solvability.
The above theorem states that, for a certain class of equations,
global solvability is actually equivalent to local solvability.

\begin{example} [$m=3$]
For equation~(\ref{EL}), 
Hasse's theorem is
a consequence of Legendre's theorem. See
the exercises at the end of this section.
\end{example}

\begin{example} [$m=2$]\label{example2}
The equation
$a X^2 + b X Y + c Y^2 = 0$ has a nontrivial $\Z$-solution if and only if
the discriminant $d = b^2 - 4 ac$ is a square, and has a primitive solution modulo 
an odd prime~$p$ if and only $d$ is a square modulo $p$.
So Hasse's theorem for $a X^2 + b X Y + c Y^2 = 0$  is a consequence of the
following theorem of Gauss:
\emph{If an integer is a square modulo $p$ for all odd primes $p$ then it is a square} \cite[Art.~125]{Gauss}. 

In this case one only needs to check modulo $p$ for odd primes $p$.
One does not need to check for solutions for $\R$, $p=2$, and powers of odd primes.
(The above theorem of Gauss extends to $n$th powers for $n$ up to $7$,
but it does not extend to $8$th powers
since $16$ is an $8$th power modulo all primes.\footnote{For a proof that this 
is in some sense the most general
counterexample, see Kraft and Rosen~\cite{KR}.}
As a consequence $X^8 - 16Y^8 = 0$ has primitive solutions modulo $p$ for
all primes $p$, but does not possess a global solution. 
The equation $X^8 - 16 Y^8 = 0$ is not, however, a counterexample to a higher-degree 
Hasse principle:
the solvability condition fails modulo $32$.)
\end{example}

Typically $p$-local solvability reduces to showing solvability modulo $p^k$ for
some sufficiently large $k$. As Example~\ref{example2} illustrates, 
and as we will see in this paper, $k=1$
is often enough. (See Appendix A for more
on the phenomenon of ``lifting'' solutions modulo $p$ to solutions modulo $p^k$.)

A natural setting for understanding solutions modulo~$p^k$ as
$k$ varies is through the
\emph{ring of $p$-adic integers~$\Z_p$} developed by Hensel.
Using the ring~$\Z_p$ allows one to organize a coherent
sequence of solutions modulo~$p^k$ for all $k$ into
one $p$-adic solution.
The field $\Q_p$ of $p$-adic numbers is the fraction field of~$\Z_p$.
The rings $\Z_p$ and fields~$\Q_p$ play a crucial role in modern
number theory, and are present in virtually every discussion of the
Hasse principle.
The current paper is somewhat exceptional:
in order to make this paper more accessible,
we do not use the $p$-adic numbers or Hensel's lemma.
We do discuss $\Z_p$ and Hensel's lemma in Appendix~A, but for now we merely
mention that, like $\R$, the field $\Q_p$ is complete for
a certain absolute value, and much of real or complex analysis
generalizes to~$\Q_p$. In fact, number theorists often formally
introduce a ``prime'' $\infty$, and denote $\R$ by~$\Q_\infty$;
the fields $\Q_p$ for $p$ a prime or $\infty$ give all the
\emph{completions} of~$\Q$ and are called the \emph{local fields}
associated with~$\Q$:

\begin{pspicture}(-3,-1)(2.5,2.2)
\rput(-2,1.5){$\R$}
\psline(-2,0.4)(-2,1.2)
\rput(-2,0){$\Q$}
\rput(-2,-0.5){calculus}
\rput(3,0){$\Q$}
\rput(1.5,1.5){$\Q_2$}
\psline(2.6,0.4)(1.5,1.2)
\rput(2.2,1.5){$\Q_3$}
\psline(2.8,0.4)(2.2,1.2)
\rput(2.9,1.5){$\Q_5$}
\psline(3,0.4)(2.9,1.2)
\rput(3.6,1.5){$\ldots$}
\psline[linestyle=dotted](3.2,0.4)(3.6,1.2)
\rput(4.7,1.5){$\Q_\infty = \R$}
\psline(3.4,0.4)(4.6,1.2)
\rput(3,-0.5){number theory}
\end{pspicture}

It is in this language that Hasse's theorem
achieves its standard form.

\begin{thm*} [Hasse's Theorem: version 3]
If $F \in \Z[X_1, \ldots, X_m]$
is homogeneous of degree~$2$,
then $F(X_1, \ldots, X_m) = 0$ has a nontrivial $\Q$-solution
if and only if
it has a nontrivial $\Q_p$-solution for all~$p$ (including $p=\infty$).
\end{thm*}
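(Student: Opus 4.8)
The plan is to deduce version~3 from version~2 of Hasse's Theorem, which we take as already established, by translating the single hypothesis ``nontrivial $\Q_p$-solution for every~$p$'' into conditions~(i) and~(ii) of version~2. Most of this is packaging. Since $F$ is homogeneous of degree~$2$, clearing denominators converts any nontrivial $\Q$-solution of~(\ref{gen}) into a nontrivial (indeed primitive) $\Z$-solution and conversely, so ``nontrivial $\Q$-solution'' and ``nontrivial $\Z$-solution'' may be used interchangeably; and since $\Q_\infty$ is by definition~$\R$, the factor at $p=\infty$ is precisely condition~(i). So the real content lies in comparing, for each finite prime~$p$, solvability over~$\Q_p$ with $p$-local solvability, i.e.\ with the existence of a primitive solution modulo~$p^k$ for every $k\ge1$.

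I would first note that the ``only if'' direction of version~3 needs nothing: a nontrivial $\Z$-solution is at once a nontrivial $\R$-solution and a nontrivial $\Z_p$-solution for every finite~$p$, so global solvability forces $\Q_p$-solvability for all~$p$. For ``if'', I would prove the key lemma: \emph{$(\ref{gen})$ has a nontrivial $\Q_p$-solution if and only if it has a primitive solution modulo~$p^k$ for every $k\ge1$.} One half is routine: given a nontrivial $v\in\Q_p^m$ with $F(v)=0$, rescaling by a suitable power of~$p$ places~$v$ in~$\Z_p^m$ with some coordinate a unit, and reduction modulo~$p^k$ then gives an integer $m$-tuple with $\gcd$ prime to~$p$ solving $F\equiv0\pmod{p^k}$ --- a primitive solution modulo~$p^k$ in the relaxed sense of the earlier footnote. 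Applying this half at every finite~$p$ (and using $\Q_\infty=\R$) converts the hypothesis of version~3 into conditions~(i) and~(ii); version~2 then yields a nontrivial $\Z$-solution, hence a nontrivial $\Q$-solution, which completes the argument.

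The hard part --- and essentially the only step with any content --- is the other half of the key lemma, which is what makes ``$\Q_p$-solvable'' and ``$p$-locally solvable'' genuinely the same condition: from primitive solutions~$w_k$ modulo~$p^k$ for all~$k$, which a priori need not be compatible under reduction, one must produce an actual point of~$\Z_p^m$. I would do this by a compactness extraction: lift each~$w_k$ to an integer $m$-tuple~$\widetilde{w}_k$ retaining a coordinate prime to~$p$, pass to a subsequence converging in the compact space~$\Z_p^m$ to a point~$v$, and observe that continuity of the polynomial~$F$ together with $F(\widetilde{w}_k)\equiv0\pmod{p^k}$ forces $F(v)=0$, while the reduction of~$v$ modulo~$p$ agrees with that of some~$\widetilde{w}_k$ and is therefore nonzero, so $v\ne0$. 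Stated without topology --- which suits the elementary spirit of the paper, since $\Z_p^m$ is by construction the inverse limit $\varprojlim(\Z/p^k\Z)^m$ --- one instead organizes the primitive solutions modulo~$p^k$ into an infinite, finitely branching tree under reduction and applies K\"onig's lemma to extract a coherent sequence. The essential input is the finiteness of~$(\Z/p^k\Z)^m$; everything else is bookkeeping with homogeneity and with the definition of the local fields.
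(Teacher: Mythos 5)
Your proposal is correct and takes essentially the same route as the paper: the paper likewise treats version~3 as a repackaging of version~2, the only substantive point being the equivalence of nontrivial $\Q_p$-solvability with the existence of primitive solutions modulo~$p^k$ for all~$k$, which is exactly the Proposition proved in Appendix~A. Your compactness/K\"onig's-lemma extraction of a coherent sequence from possibly incompatible solutions modulo~$p^k$ is precisely the paper's ``infinitely extendable solution'' construction, just phrased topologically.
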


We say that a class of homogeneous equations
satisfies the \emph{Hasse principle} or
the \emph{local-global principle} if each equation
in the class has a nontrivial $\Z$-solution
if and only if ($i$) it has a nontrivial $\R$-solution, and
($ii$) it has a primitive solution modulo~$N$ for each $N$.
As mentioned above, the Chinese remainder theorem allows us
to replace ($ii$) by the following:
($ii'$) it has a primitive solution modulo~$p^k$ for each prime~$p$
and exponent~$k\ge 1$.
We formulate the Hasse principle for systems of homogeneous
polynomials in a similar manner.\footnote{We formulate the Hasse principle 
for homogeneous equations
in order to restrict our attention to integer solutions.
In the language of algebraic geometry, this formulation
asserts the existence of $\Q$-points (global solutions) on the associated projective 
variety given the existence of $\Q_p$-points (local solutions) for all~$p$
including $p=\infty$.}

However, \emph{the Hasse principle fails in general}.
In fact, it fails for the next
obvious class of equations: cubic equations.
The most famous
example is due to Selmer~\cite{Selmer}:
\begin{equation}\label{ESel}
    3X^3 + 4Y^3 + 5Z^3 = 0.
\end{equation}
This cubic obviously has nontrivial $\R$-solutions and
it can be shown to be locally solvable,
but \emph{it has no global solutions}.\footnote{Showing the absence of
global solutions is not elementary. Known proofs
of this fact use the arithmetic of cubic number fields; one possible approach
is to multiply (\ref{ESel}) through by $2$, change $Z$ to $-Z$, and factor the
left-hand side of the transformed
equation $6 X^3 + Y^3  = 10Z^3$ over $\Q(\sqrt[3]{6}\,)$. In contrast,
there are elementary proofs, like the one given in the current paper, that 
(\ref{ER}) has no global solutions.

The work of Selmer on this problem led Cassels to introduce the notion of Selmer groups
and to his groundbreaking work on Tate--Shafarevich groups in the theory
of elliptic curves; nowadays, Selmer's example can be interpreted as
representing an element of order $3$ in the Tate--Shafarevich group~$\sha_E$
of the elliptic curve $E: X^3 + Y^3 + 60Z^3 = 0$.
See \cite{Cassels}, \cite{Maz}, and our
Appendix~$B$
for more on the relationship between counterexamples
and the Tate--Shafarevich group.}

What if one sticks to quadratic equations, but allows \emph{systems} of
equations? In this paper we will show that the Hasse principle also
fails for this class. In particular we study systems of the form
\begin{equation}\label{twoquad}
a U^2 + b V^2 + c W^2 = d Z^2, \qquad UW= V^2,
\end{equation}
mainly when $b=0$,
and use
elementary
methods to produce counterexamples to the Hasse principle.
The case where $a=1, b=0, c=-17$, and $d=2$ is important
since it was the first known counterexample to the 
Hasse principle for Diophantine equations.\footnote{Counterexamples to the 
Hasse principle for norms, according to which an element of a number 
field is a norm if and only if it is a norm in every localization, were 
known for noncyclic extensions already around 1934.}
It was produced by Lind~\cite{Lind} and Reichardt~\cite{Rei} several 
years before Selmer's. As we will discuss in Section~\ref{s6}, 
the system (\ref{twoquad}) with  $a=1, b=0, c=-17$, and $d=2$
can be transformed into the single 
(nonhomogeneous) equation
\begin{equation}\label{ER}
    X^4 - 17Y^4 = 2 Z^2,
\end{equation}
the form considered by Lind
and Reichardt.

The purpose of this article is to give a self-contained,
accessible proof
of the existence of counterexamples
to the Hasse principle of the form (\ref{twoquad}),
counterexamples similar to
those of Lind and Reichardt's, using the easy
and well-known technique of parametrizing conics
to justify local solvability.\footnote{Other approaches 
use the less elementary method of
quartic Gauss and Jacobi sums.
Applied to quartics like $aX^4 + bY^4 = Z^2$
this method shows the solvability only for sufficiently large
values of $p$, and making the bounds explicit is quite technical.
Short but less elementary arguments can be given
by appealing to the Hasse-Weil bounds for curves of genus~$1$
defined over finite fields,
or F.~K.~Schmidt's result
on the existence of points on genus~$1$ curves over finite fields.}
The only required background is a standard undergraduate course in
number theory up to quadratic reciprocity, and a
standard undergraduate
course in modern algebra up to basic facts about polynomials
over rings and fields. (Material directed to a more
advanced audience will be confined to the footnotes
and the appendices.)
As far as we know, this paper is unique in developing interesting
counterexamples to the Hasse principle
in such an elementary manner.\footnote{A less interesting, but simpler
counterexample is $(X^2 - 2 Y^2)(X^2 - 17 Y^2)(X^2 - 34 Y^2)=0$.
To find solutions modulo~$p^k$, use properties of the Legendre symbol, and
Propositions~\ref{henselpower} and~\ref{fourthpowermod2}.}
We hope that
this paper will give a general mathematical audience a taste of this
interesting subject.

Variants of the Hasse principle, and the study of the manner in which
these principles fail, is a very important and active area of current
research.\footnote{For some accounts of recent activity 
see Mazur~\cite{Maz} or the summary of
mini-courses by Colliot-Th\'el\`ene and others 
(at
\texttt{http://swc.math.arizona.edu/oldaws/99GenlInfo.html}).}
As discussed in Appendix~B, these counterexamples are of interest
from the point of view of elliptic curves.


\bigskip\noindent
We conclude this section
by offering exercises showing the relationship between Legendre's
theorem, discussed at the start of this section,
and the Hasse principle.
As above, assume that $a, b, c\in\Z$ are such that $abc$ is  nonzero and
square-free.

\begin{exercise}
Let $p$ be a prime.
Call $(x_0, y_0, z_0)$ a \emph{$p$-focused} triple
if at most one of $x_0, y_0, z_0$ is divisible by~$p$.
Show that any primitive solution to the congruence
$$a X^2 + b Y^2 + c Z^2 \equiv 0 \pmod {p^2}$$
is $p$-focused.
\end{exercise}

\begin{exercise}
Suppose that $p\mid a$ and that
the congruence $a X^2 + b Y^2 + c Z^2 \equiv 0$ modulo $p$
has a $p$-focused solution. Show that $-bc$ is a square modulo~$p$.

Conclude that if $a X^2 + b Y^2 + c Z^2 \equiv 0$ modulo $p$
has a $p$-focused solution for all odd $p\mid a$, then
$-bc$ is a square modulo~$|a|$.
\end{exercise}

\begin{exercise}
Take Legendre's theorem as given
and use the preceding exercise to show that
if the congruence $a X^2 + b Y^2 + c Z^2 \equiv 0$ modulo $p$
has a $p$-focused solution for all odd primes $p\mid abc$,
and if the equation $a X^2 + b Y^2 + c Z^2 =0$
has a nontrivial $\R$-solution, then
$a X^2 + b Y^2 + c Z^2 =0$ has a
nontrivial $\Z$-solution.
\end{exercise}

\begin{exercise}
Use the above exercises to show that the Hasse
principle for the equation $a X^2 + b Y^2 + c Z^2 =0$
is a consequence of Legendre's theorem.
\end{exercise}


\section{Parametrizing Conics} \label{s2}

A standard method for finding Pythagorean triples
is through the rational para\-metriza\-tion
of the unit circle~$\big\{ (x,y) \in\R^2 \mid x^2+y^2 = 1\big\}$.
The parametrization is found
by intersecting the circle with the line of slope $t$ going
through the point $P = (-1, 0)$ of the circle.
The line defined by $y=t(x+1)$ intersects
the circle defined by $x^2+y^2 - 1 = 0$
at points whose first coordinates
satisfy the equation $0=x^2+t^2(x+1)^2-1= (x+1)(x-1 + t^2x + t^2)$.
Thus, the points of intersection are the point $P = (-1,0)$ we
started with, as well as
$P_t = \bigl( \frac{1-t^2}{1+t^2}, \frac{2t}{1+t^2} \bigr)$.

\smallskip

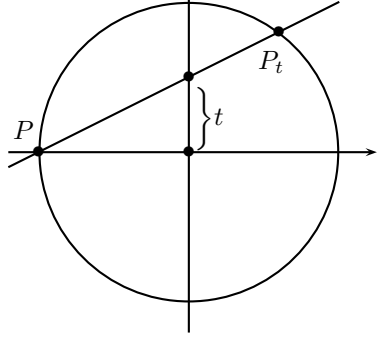
\begin{figure}[ht]
\begin{pspicture}(0,-2)(2,2)
\pscircle(0,0){2}
\psline(-2.4,-0.2)(2,2)
\psline{->}(-2.4,0)(2.5,0)
\psline{->}(0,-2.4)(0,2.5)
\rput(-2.2,0.3){$P$}
\rput(1.1,1.2){$P_t$}
\rput(0,0){$\bullet$}
\rput(0,1){$\bullet$}
\rput(0.27,0.45){$\bigg\} t$}
\rput(1.2,1.6){$\bullet$}
\rput(-2,0){$\bullet$}
\end{pspicture}
\caption{Parametrizing the unit circle.}
\end{figure}

This parametrization leads us to the following identity in $\R[T]$:
\begin{equation}\label{standard}
(1-T^2)^2 + (2T)^2 = (1+T^2)^2.
\end{equation}
Specializing $T$ to $n/m$ with $n,m\in\Z$
gives Pythagorean triples:
$$  (m^2-n^2)^2 + (2mn)^2 = (m^2+n^2)^2.  $$

The above procedure is purely algebraic, and
there is no problem modifying it to the
equation
$ax^2 + by^2 = 1$
over a general field~$F$ where $a,b\in F$ are nonzero.
Of course, we need  a starting point:
we need $x_0, y_0\in F$ such that $ax_0^2 + b y_0^2 = 1$.
The analogue to~(\ref{standard}) is displayed in the following lemma
as~(\ref{qequation}).\footnote{In
the language of algebraic geometry, a nonsingular plane conic
possessing at least one $F$-rational point
is isomorphic to ${\mathbb P}^1$ via such a parametrization.
The restriction to conics of the form $ax^2+by^2=1$
is not a true restriction:
if $\Char F\ne 2$ then every nondegenerate conic
can be brought into the form $ax^2+by^2=1$ with a projective transformation.}

\begin{lemma}\label{qlemma}
Let $F$ be a field, and let $a,b\in F$ be nonzero.
Let $x_0, y_0 \in F$ be such that $ax_0^2+b y_0^2 =1$.
Then in $F[T]$
\begin{equation}\label{qequation}
a q_1^2  + b q_2^2 =  q_3^2
\end{equation}
where
$$q_1 = b x_0 T^2 - 2  b y_0 T - a x_0,\quad
q_2 = - b y_0 T^2 - 2 a  x_0 T + a y_0,  \quad q_3 = b T^2 + a.$$
Furthermore, at least two of $q_1, q_2, q_3$ have degree exactly $2$.
Finally, if $\Char F \ne 2$, then each of $q_1, q_2, q_3$ is nonzero,
and no two are associates.\footnote{Recall
that two nonzero polynomials of $F[T]$ are associates if one
is a constant multiple of the other. More generally, in any unique factorization domain, two nonzero elements are called \emph{associates} if the second is the product of a unit with the first.}
\end{lemma}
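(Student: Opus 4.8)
The plan is to verify the polynomial identity~(\ref{qequation}) by direct expansion, and then to read off the degree and non-associate claims from the explicit formulas for $q_1, q_2, q_3$. For the identity itself, I would compute $a q_1^2 + b q_2^2$ as a polynomial in $T$ and collect coefficients of $T^4, T^3, T^2, T^1, T^0$. The leading coefficient ($T^4$) is $a b^2 x_0^2 + b (b y_0)^2 = b^2(a x_0^2 + b y_0^2) = b^2$, matching the $T^4$-coefficient of $q_3^2 = (bT^2 + a)^2 = b^2 T^4 + 2ab T^2 + a^2$. The constant term is $a(a x_0)^2 + b (a y_0)^2 = a^2(a x_0^2 + b y_0^2) = a^2$, again matching. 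The $T^2$-coefficient works out to $2ab(a x_0^2 + b y_0^2) = 2ab$ after the cross terms from each square are combined. The $T^3$ and $T$ coefficients should each vanish: in each case the contribution from $a q_1^2$ exactly cancels the contribution from $b q_2^2$, using only the relation $a x_0^2 + b y_0^2 = 1$ (and in fact for $T^3$ and $T$ one sees cancellation even before invoking that relation). This is the routine computational core; I would present it compactly rather than writing out every monomial.

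Next, for the degree claim: $q_3 = bT^2 + a$ has degree exactly $2$ since $b \ne 0$. The polynomials $q_1$ and $q_2$ have leading coefficients $b x_0$ and $-b y_0$ respectively, so each has degree exactly $2$ unless the corresponding coordinate vanishes. Since $a x_0^2 + b y_0^2 = 1 \ne 0$, we cannot have both $x_0 = 0$ and $y_0 = 0$; hence at least one of $q_1, q_2$ also has degree exactly $2$, giving at least two of the three.

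For the final claim (assume $\Char F \ne 2$), I first check each $q_i$ is nonzero. We have $q_3 \ne 0$ as noted. If $q_1 = 0$ then all its coefficients vanish, so $b x_0 = 0$ and $a x_0 = 0$ and $b y_0 = 0$; since $a, b \ne 0$ this forces $x_0 = y_0 = 0$, contradicting $a x_0^2 + b y_0^2 = 1$. Similarly $q_2 = 0$ forces $x_0 = y_0 = 0$. For the non-associate claim, suppose two of them were associates, say $q_i = \lambda q_j$ with $\lambda \in F^\times$. Comparing $q_3$ with $q_1$: a relation $bT^2 + a = \lambda(b x_0 T^2 - 2 b y_0 T - a x_0)$ forces $y_0 = 0$ (from the $T$-coefficient, using $b \ne 0$), hence $a x_0^2 = 1$, and then matching $T^2$- and constant-coefficients gives $b = \lambda b x_0$ and $a = -\lambda a x_0$, so $x_0 = 1/\lambda = -1/\lambda$, forcing $2/\lambda = 0$, impossible in characteristic $\ne 2$. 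The comparison of $q_3$ with $q_2$ is symmetric (it forces $x_0 = 0$, then the analogous contradiction). For $q_1$ versus $q_2$: from $q_1 = \lambda q_2$, matching the three leading coefficients gives $b x_0 = -\lambda b y_0$, $-2 b y_0 = -2 \lambda a x_0$, and $-a x_0 = \lambda a y_0$; the first and third give $x_0 = -\lambda y_0$ and $x_0 = -\lambda y_0$ consistently, but substituting into the middle equation yields $b y_0 = \lambda a x_0 = -\lambda^2 a y_0$ and $b x_0 = -\lambda b y_0 = \lambda^2 a x_0$; if $y_0 \ne 0$ then $b = -\lambda^2 a$, if $x_0 \ne 0$ then $b = \lambda^2 a$, and since not both $x_0, y_0$ vanish we get $b = \pm \lambda^2 a$ — one then pushes this together with $a x_0^2 + b y_0^2 = 1$ and $x_0 = -\lambda y_0$ to a contradiction in characteristic $\ne 2$. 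I expect this last case (the $q_1$--$q_2$ comparison) to be the fiddliest bookkeeping; the main obstacle is not conceptual but simply organizing the several sub-cases cleanly, and I would look for a slicker uniform argument — for instance, noting that an associate relation among the $q_i$ would, after specializing $T$, force a degeneracy of the conic parametrization — to avoid the case split if possible.
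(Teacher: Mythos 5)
Your expansion of the identity, the degree claim, and the nonvanishing of each $q_i$ are all fine, and your comparisons of $q_3$ with $q_1$ and of $q_3$ with $q_2$ are complete and correct. The one genuine problem is the $q_1$--$q_2$ case, which you leave unfinished after a sign slip. From $b x_0=-\lambda b y_0$ and $b y_0=\lambda a x_0$ one gets $b x_0=-\lambda(b y_0)=-\lambda^2 a x_0$, not $+\lambda^2 a x_0$, so the branch $b=+\lambda^2 a$ never actually arises --- which matters, because in that phantom branch the facts you propose to ``push together'' ($b=\lambda^2 a$, $x_0=-\lambda y_0$, $a x_0^2+b y_0^2=1$) are mutually consistent (they just say $2a\lambda^2 y_0^2=1$), so the asserted contradiction would not follow from them. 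The correct computation gives $b=-\lambda^2 a$ uniformly: note first that $y_0=0$ is impossible, since $x_0=-\lambda y_0$ would then force $x_0=0$ as well, against $a x_0^2+b y_0^2=1$; then $b y_0=\lambda a x_0=-\lambda^2 a y_0$ gives $b=-\lambda^2 a$, and hence $1=a x_0^2+b y_0^2=a\lambda^2 y_0^2-a\lambda^2 y_0^2=0$, an immediate contradiction. With that repair your case analysis is complete and proves the lemma.

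It is worth comparing with the paper's proof, which takes exactly the ``slicker uniform argument'' you were hoping for and avoids the case split. There, if two of $q_1,q_2,q_3$ are associates they must both have degree $2$, and the relation $a q_1^2+b q_2^2=q_3^2$ then forces $q_1^2, q_2^2, q_3^2$ to be pairwise associates; Lemma~\ref{assoclemma} (in a UFD, associate $n$th powers have associate bases) then makes $q_1$ and $q_2$ constant multiples of $q_3$, which is impossible because $q_3$ has no linear term while, since $\Char F\ne 2$ and $x_0,y_0$ are not both zero, at least one of the linear coefficients $-2b y_0$, $-2a x_0$ is nonzero. The paper's route buys a shorter, caseless argument at the price of invoking the UFD lemma; your route is more pedestrian but entirely elementary, needing only coefficient comparison.
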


\begin{remark}
The polynomials $q_1, q_2, q_3$ are found using the parametrization method, 
but how they are discovered is not crucial to the proof below. What is important 
is that $a q_1^2 + b q_2^2 =  q_3^2$.
\end{remark}

\begin{proof}
A straightforward calculation verifies that $a q_1^2 + b q_2^2 =  q_3^2$.
Observe that $\deg q_3 =2$ since $b\ne 0$.
Since $q_3^2 = a q_1^2 + b q_2^2$, we also have
$\deg q_1 =2$ or $\deg q_2 =2$.

Assume $\Char F\ne 2$.
Since $a$ and $b$ are nonzero, and $x_0$ and $y_0$ are not both~$0$,
each of $q_1, q_2, q_3$ is nonzero.
Suppose two of $q_1, q_2, q_3$ are associates. Then
these two must have degree $2$.
The equation $a q_1^2 + b q_2^2 =  q_3^2$ then implies
$q_1^2, q_2^2, q_3^2$ are all associates.
By Lemma~\ref{assoclemma} below,
$q_1$ and $q_2$ are constant multiplies of $q_3$.
But this contradicts the fact that at least one of $q_1$ or $q_2$
has a nonzero linear term.
\end{proof}

The above makes use of the following general fact:

\begin{lemma}\label{assoclemma}
Let $a$ and $b$ be nonzero elements of a unique factorization domain~$R$.
If $a^n$ and $b^n$ are associates for some positive~$n$, then $a$ and $b$ are associates.
\end{lemma}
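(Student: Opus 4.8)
The plan is to use unique factorization in $R$ directly. Write the factorizations of $a$ and $b$ into irreducibles: $a = u \prod_i p_i^{e_i}$ and $b = v \prod_j q_j^{f_j}$, where $u,v$ are units and the $p_i$, $q_j$ are irreducibles, pairwise nonassociate within each list. Then $a^n = u^n \prod_i p_i^{n e_i}$ and $b^n = v^n \prod_j q_j^{n f_j}$, and these are still factorizations into irreducibles (with multiplicities scaled by $n$).

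First I would invoke the hypothesis that $a^n$ and $b^n$ are associates: this means $a^n = w\, b^n$ for some unit $w$. By uniqueness of factorization, the multiset of irreducible factors of $a^n$ (counted with multiplicity, up to associates) equals that of $b^n$. Hence after reindexing, the $p_i$ are associate to the $q_i$ (so the two lists have the same length, say $r$), and the exponents match: $n e_i = n f_i$ for each $i$. Since $R$ is a domain and $n \ge 1$, cancel $n$ to get $e_i = f_i$ for all $i$.

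Then I would conclude: $a = u \prod_i p_i^{e_i}$ and $b = v \prod_i q_i^{e_i}$ with $p_i$ associate to $q_i$, say $q_i = w_i p_i$ for units $w_i$. Substituting, $b = \bigl( v \prod_i w_i^{e_i} \bigr) \prod_i p_i^{e_i} = \bigl( v u^{-1} \prod_i w_i^{e_i}\bigr)\, a$, which exhibits $b$ as a unit times $a$, so $a$ and $b$ are associates.

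I do not expect a genuine obstacle here; the only point requiring a little care is the bookkeeping in matching up irreducible factors and making sure the cancellation $n e_i = n f_i \Rightarrow e_i = f_i$ is legitimate — this uses that $R$, being a UFD, is an integral domain, so $n \ne 0$ in $\Z$ lets us cancel the positive integer $n$ from the natural-number exponents. One could also phrase the whole argument using $p$-adic valuations $v_p$ attached to each irreducible $p$: the condition ``$a^n$ and $b^n$ associates'' says $n\, v_p(a) = n\, v_p(b)$ for every irreducible $p$, hence $v_p(a) = v_p(b)$ for every $p$, which is exactly the statement that $a$ and $b$ are associates. Either formulation is routine; I would present the valuation version for brevity.
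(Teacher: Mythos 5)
Your proof is correct and follows essentially the same route as the paper's: factor $a$ and $b$ into irreducibles, use uniqueness of factorization to match the irreducible factors and exponents of $a^n$ and $b^n$, and cancel $n$; your version just spells out the bookkeeping (or the valuation reformulation) that the paper leaves implicit. The only cosmetic remark is that cancelling $n$ from $ne_i = nf_i$ is ordinary integer arithmetic and needs no appeal to $R$ being a domain.
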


\begin{proof}
Factor $a$ and $b$ into irreducible elements. Up to multiplication by units, $a^n$ and $b^n$ have the same factorization.
This means that $a$ and $b$ must also have the same factorization up to multiplication by units.
\end{proof}

The existence of $q_1, q_2, q_3$ in the above lemma depends
on the existence of at least one solution $a x_0^2+b y_0^2 = 1$.
For $F= \F_p$ the existence of such a solution
follows from Euler's criterion.\footnote{Suppose $p$ is an odd prime
and $a\in \F_p$ is nonzero.
Euler's criterion
states that $a$ is a square in
the field~$\F_p$ if
and only if $a^{(p-1)/2} = 1$, and that $a$ is not a square in $\F_p$
if and only if $a^{(p-1)/2} = -1$.
Euler's criterion is a consequence of
the well-known result that the multiplicative group  of nonzero
elements of $\F_p$ is cyclic of order~$p-1$. 

In fact, the multiplicative group $F^\times$ of any finite field~$F$ is cyclic.
Thus Euler's criterion generalizes to any finite field $F$ of odd order.
One can also use the cyclic nature of $F^\times$ to show that
every element of a finite field of even order is a square.
As a consequence, Lemma~\ref{conicpoint} generalizes
to arbitrary finite fields.}

\begin{lemma}\label{conicpoint}
Let $a$ and $b$ be nonzero elements of the field $\F_p$ where
$p$ is a prime. Then there exist $x_0, y_0\in \F_p$ such that
$a x_0^2+b y_0^2 = 1$.
\end{lemma}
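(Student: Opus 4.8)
The plan is to use a counting argument on the two sets $\{ax^2 : x \in \F_p\}$ and $\{1 - by^2 : y \in \F_p\}$ and show that they must overlap, giving the desired solution. First I would dispose of the characteristic-$2$ case separately: if $p=2$ then every element of $\F_2$ is a square (indeed $0^2=0$, $1^2=1$), so the equation $aX^2+bY^2=1$ becomes $aX+bY=1$ over $\F_2$, a nontrivial linear equation in the invertible elements $a,b$, which plainly has a solution. So I may assume $p$ is odd.

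For odd $p$, the key observation is that the squaring map $F \to F$, $t \mapsto t^2$ is two-to-one on $\F_p^\times$ with image the set of nonzero squares, so the set of squares in $\F_p$ (including $0$) has exactly $\frac{p-1}{2}+1 = \frac{p+1}{2}$ elements. Consequently the set $S_1 = \{ax^2 : x \in \F_p\}$ also has $\frac{p+1}{2}$ elements, since multiplication by the nonzero constant $a$ is a bijection of $\F_p$; likewise $S_2 = \{1 - by^2 : y \in \F_p\}$ has $\frac{p+1}{2}$ elements, as $y \mapsto 1-by^2$ is the composition of the squaring map with the bijection $u \mapsto 1-bu$. Since $|S_1| + |S_2| = p+1 > p = |\F_p|$, the two subsets of $\F_p$ cannot be disjoint. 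Hence there exist $x_0, y_0 \in \F_p$ with $ax_0^2 = 1 - by_0^2$, i.e.\ $ax_0^2 + by_0^2 = 1$, which is exactly what we want.

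The only mild subtlety — and what I'd flag as the ``main obstacle,'' though it is quite minor — is getting the cardinality count exactly right, in particular remembering to include $0$ among the squares so that the two sets have sizes summing to $p+1$ rather than $p-1$; if one forgets the zero one gets $|S_1|+|S_2| = p-1 < p$ and the pigeonhole fails. Everything else is routine: the bijectivity of $x \mapsto ax^2$ follows from $a \ne 0$, and the bijectivity of $u \mapsto 1-bu$ from $b \ne 0$. Alternatively, one could phrase the argument in the language of the footnote — invoking that $\F_p^\times$ is cyclic and Euler's criterion — but the pigeonhole count is cleaner and makes the inclusion of the zero element transparent. I would present the characteristic-$2$ case in one sentence and then the pigeonhole argument for odd $p$ as above.
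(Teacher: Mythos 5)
Your proof is correct, but it takes a different route from the paper. You use the classical pigeonhole count: the sets $\{ax^2 : x\in\F_p\}$ and $\{1-by^2 : y\in\F_p\}$ each have $\frac{p+1}{2}$ elements (squares including $0$, pushed through the bijections $u\mapsto au$ and $u\mapsto 1-bu$), so they must meet; your separate treatment of $p=2$ and your care to include $0$ among the squares are both right. The paper instead argues via Euler's criterion: if $y^2=f(x)$ with $f(x)=b^{-1}(1-ax^2)$ had no solution, then $f(t)^{(p-1)/2}=-1$ for every $t\in\F_p$, so the polynomial $f(X)^{(p-1)/2}+1$, of degree at most $p-1$, would have $p$ roots --- a contradiction. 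The two arguments are equally elementary; yours is the standard counting proof and needs nothing beyond the two-to-one nature of squaring, while the paper's polynomial root-counting argument is chosen to match the toolkit it reuses later (Euler's criterion and root counts reappear in the proof of Lemma~\ref{PA}), and it generalizes directly to arbitrary finite fields of odd order as noted in the paper's footnote. Either proof is acceptable.
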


\begin{proof}
If $p=2$, take $x_0=1$ and $y_0=0$.
If $p>2$,
we wish to solve $y^2=f(x)$ where $f(x)=b^{-1}(1-a x^2)$.
If there are no solutions, then  $f(t)$ is a nonsquare
for each $t\in \F_p$. By Euler's criterion,
  $f(t)^{(p-1)/2} = -1$
for all $t\in \F_p$. However,
this contradicts the fact that the
degree-($p-1$) polynomial $f(x)^{(p-1)/2} + 1$ has at
most $p-1$ roots.
\end{proof}

\begin{remark}
This result is due to Euler and arose
from his attempt to prove Fermat's claim that every
integer is the sum of four squares, a theorem
finally proved by Lagrange (see \cite[Chapter III \S XI]{Weil}).
For that theorem,
one uses the solvability
of  $-x^2 - y^2 = 1$ in $\F_p$.
\end{remark}

\begin{remark}
As we have seen, methods for parametrizing the unit circle
turn out to apply to conics defined over general fields.
This is an example of a
major theme of arithmetic geometry, that many ideas of geometry
carry over to other fields of interest to number theorists.
The reader might be amused to see
a further example.
Figure \ref{F1} displays the plane over~$\F_7$, which has
$7^2$ points, denoted by $+$ or $\bullet$, and the unit circle $x^2 + y^2 = 1$,
consisting of $8$ points, denoted by $\bullet$. In the graph
on the right, the line $L: y = x+3$ is displayed. Note that every line
in the affine plane over $\F_7$ contains $7$ points -- the segment
between, say, $(0,3)$ and $(1,4)$ is not part of the line: it is
only drawn to help us visualize the line. We also remark
that $L$ can also be written as $y = -6x+3$; this may change how we
draw segments between the points but, of course, not the
points on $L$. The line $L$ intersects the unit circle in exactly one
point, namely $(2,5)$, and hence is the tangent to the unit circle at
this point. Similarly, $x = 1$ is the tangent to the unit circle at
$(1,0)$. The line $y = 2x-1$ intersects the unit circle in exactly
two points: can you see which?

We leave it as an exercise to the reader to determine the interior
of the unit circle: these are defined to be the points that do not lie on any
tangent to the circle. For example, the points $(1,x)$ with
$1 \le x \le 6$ are exterior points since they lie on the tangent
at $(1,0)$.
\end{remark}

\begin{figure}[ht]
\begin{picture}(120,120)(30,0)
\put(-3.8,-2.5){$+$}
\put(-2.5,17.5){$\bullet$}
\put(-3.8,37.5){$+$}
\put(-3.8,57.5){$+$}
\put(-3.8,77.5){$+$}
\put(-3.8,97.5){$+$}
\put(-2.5,117.5){$\bullet$}
\put(17.5,-2.5){$\bullet$}
\put(16.2,17.5){$+$}
\put(16.2,37.5){$+$}
\put(16.2,57.5){$+$}
\put(16.2,77.5){$+$}
\put(16.2,97.5){$+$}
\put(16.2,117.5){$+$}
\put(36.2,-2.5){$+$}
\put(36.2,17.5){$+$}
\put(37.5,37.5){$\bullet$}
\put(36.2,57.5){$+$}
\put(36.2,77.5){$+$}
\put(37.5,97.5){$\bullet$}
\put(36.2,117.5){$+$}
\put(56.2,-2.5){$+$}
\put(56.2,17.5){$+$}
\put(56.2,37.5){$+$}
\put(56.2,57.5){$+$}
\put(56.2,77.5){$+$}
\put(56.2,97.5){$+$}
\put(56.2,117.5){$+$}
\put(76.2,-2.5){$+$}
\put(76.2,17.5){$+$}
\put(76.2,37.5){$+$}
\put(76.2,57.5){$+$}
\put(76.2,77.5){$+$}
\put(76.2,97.5){$+$}
\put(76.2,117.5){$+$}
\put(96.2,-2.5){$+$}
\put(96.2,17.5){$+$}
\put(97.5,37.5){$\bullet$}
\put(96.2,57.5){$+$}
\put(96.2,77.5){$+$}
\put(97.5,97.5){$\bullet$}
\put(96.2,117.5){$+$}
\put(117.5,-2.5){$\bullet$}
\put(116.2,17.5){$+$}
\put(116.2,37.5){$+$}
\put(116.2,57.5){$+$}
\put(116.2,77.5){$+$}
\put(116.2,97.5){$+$}
\put(116.2,117.5){$+$}
\end{picture}
\begin{picture}(120,120)(-20,0)
\put(-3.8,-2.5){$+$}
\put(-2.5,17.5){$\bullet$}
\put(-3.8,37.5){$+$}
\put(-3.8,57.5){$+$}
\put(0,60){\line(1,1){60}}
\put(-3.8,77.5){$+$}
\put(-3.8,97.5){$+$}
\put(-2.5,117.5){$\bullet$}
\put(17.5,-2.5){$\bullet$}
\put(16.2,17.5){$+$}
\put(16.2,37.5){$+$}
\put(16.2,57.5){$+$}
\put(16.2,77.5){$+$}
\put(16.2,97.5){$+$}
\put(16.2,117.5){$+$}
\put(36.2,-2.5){$+$}
\put(36.2,17.5){$+$}
\put(37.5,37.5){$\bullet$}
\put(36.2,57.5){$+$}
\put(36.2,77.5){$+$}
\put(37.5,97.5){$\bullet$}
\put(36.2,117.5){$+$}
\put(56.2,-2.5){$+$}
\put(56.2,17.5){$+$}
\put(56.2,37.5){$+$}
\put(56.2,57.5){$+$}
\put(56.2,77.5){$+$}
\put(56.2,97.5){$+$}
\put(56.2,117.5){$+$}
\put(80,0){\line(1,1){40}}
\put(76.2,-2.5){$+$}
\put(76.2,17.5){$+$}
\put(76.2,37.5){$+$}
\put(76.2,57.5){$+$}
\put(76.2,77.5){$+$}
\put(76.2,97.5){$+$}
\put(76.2,117.5){$+$}
\put(96.2,-2.5){$+$}
\put(96.2,17.5){$+$}
\put(97.5,37.5){$\bullet$}
\put(96.2,57.5){$+$}
\put(96.2,77.5){$+$}
\put(97.5,97.5){$\bullet$}
\put(96.2,117.5){$+$}
\put(117.5,-2.5){$\bullet$}
\put(116.2,17.5){$+$}
\put(116.2,37.5){$+$}
\put(116.2,57.5){$+$}
\put(116.2,77.5){$+$}
\put(116.2,97.5){$+$}
\put(116.2,117.5){$+$}
\end{picture}
\caption{The unit circle over $\F_7$, and its tangent at $(2,5)$.}\label{F1}
\end{figure}
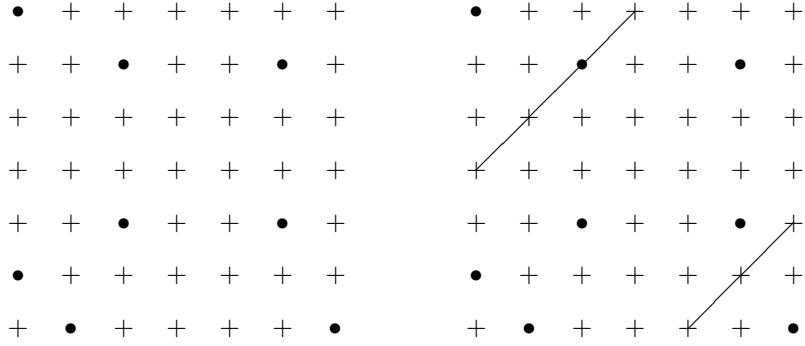


\section{Solutions modulo odd primes} \label{s3}

Let $p$ be an odd prime.
We now turn our attention to finding nontrivial solutions 
of the system
\begin{equation*}
a U^2 + c W^2 = d Z^2, \qquad UW= V^2
\end{equation*}
with values in $F=\F_p$.
Here $a, c, d\in \F_p$ are assumed nonzero.

Replacing $a$ and $c$ with $a d^{-1}$ and $c d^{-1}$
we can assume $d=1$.
The first equation $a U^2 + c W^2 =  Z^2$  can be parametrized as in
Section~\ref{s2}. This gives a family of solutions to the first equation, and
we need to determine that at least
one of these solutions also satisfies~$UW=V^2$.
The following is a key ingredient to doing so.\footnote{Everything in this section extends easily to
finite fields of odd order, not just prime fields.}

\begin{lemma}\label{PA}
Let $f, g \in \F_p[X]$ be nonzero
polynomials of degree at most two.
If $\big(\frac{f(t)}{p}\big) = \big(\frac{g(t)}{p}\big)$
for all $t \in \F_p$, or
if $\big(\frac{f(t)}{p}\big) = - \big(\frac{g(t)}{p}\big)$ for all $t 
\in \F_p$,
then $f$ and $g$ are associates.
\end{lemma}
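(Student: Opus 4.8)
The plan is to compare the multiset of values $\{f(t) : t \in \F_p\}$ with $\{g(t) : t \in \F_p\}$ by way of their quadratic characters, reducing everything to counting. First I would dispose of the degenerate cases: if $\deg f \le 1$ or $\deg g \le 1$, I claim both must actually be constant. Indeed, a degree-one polynomial $at+b$ takes every value in $\F_p$ exactly once, so among its values there is exactly one zero (giving Legendre symbol $0$) and $(p-1)/2$ each of squares and nonsquares; a nonzero constant has no zero among its values. Since the hypothesis forces the number of $t$ with $\big(\frac{f(t)}{p}\big)=0$ to equal the number with $\big(\frac{g(t)}{p}\big)=0$, $f$ and $g$ must have the same number of roots (counted without multiplicity) in $\F_p$, which rules out mismatched degrees in the low-degree cases and, once both are constant, makes them trivially associates. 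So I may assume $\deg f = \deg g = 2$.

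Next, the heart of the argument. For a quadratic $f(X) = \alpha X^2 + \beta X + \gamma$ with $\alpha \ne 0$, I would compute the character sum $S(f) := \sum_{t \in \F_p} \big(\frac{f(t)}{p}\big)$ using the standard evaluation: completing the square, $S(f) = \big(\frac{\alpha}{p}\big)\sum_{u}\big(\frac{u^2 - D}{p}\big)$ where $D$ is (a constant multiple of) the discriminant $\beta^2 - 4\alpha\gamma$, and this inner sum equals $p-1$ if $D = 0$ and $-1$ if $D \ne 0$. Hence $S(f) = (p-1)\big(\frac{\alpha}{p}\big)$ when $f$ has a double root and $S(f) = -\big(\frac{\alpha}{p}\big)$ otherwise; in particular $|S(f)| \le p-1$ with equality iff $f$ is a constant times a perfect square. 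Now in the first case of the hypothesis, $\big(\frac{f(t)}{p}\big) = \big(\frac{g(t)}{p}\big)$ for all $t$, so $S(f) = S(g)$. I would argue that $f$ has a double root iff $g$ does: if, say, $f$ has a double root but $g$ does not, then $|S(f)| = p-1$ while $|S(g)| = 1$, and since $p > 2$ these cannot be equal. If neither has a double root, then $f$ and $g$ each take the value $0$ exactly once, at the same point $t_0$ (by the zero-count remark above, and because $\big(\frac{f(t_0)}{p}\big) = 0$ forces $\big(\frac{g(t_0)}{p}\big) = 0$); then $f/\alpha$ and $g/\alpha'$ are monic quadratics sharing the simple root $t_0$ — but that isn't yet enough, so instead I would look at the product: $\big(\frac{f(t)g(t)}{p}\big) = \big(\frac{f(t)}{p}\big)^2 \in \{0,1\}$ for all $t$, so $f(t)g(t)$ is a square (or zero) for every $t$, which by Lemma~\ref{conicpoint}-style counting (the degree-four polynomial $(fg)^{(p-1)/2} - 1$ vanishes at the $\ge p - 2$ points where $fg \ne 0$, forcing $p-2 \le 2(p-1)$... this is too weak for small $p$, so I need the sharper route) — here I would instead directly use that $fg$ is a quartic all of whose values are squares, hence $fg$ is itself a square in $\F_p[X]$ up to a square constant (this is the standard fact that a polynomial over $\F_p$ whose values are all squares and whose degree is less than $p$ must be a square times a constant, provable by the character-sum bound $|\sum \big(\frac{h(t)}{p}\big)| \le (\deg h - 1)\sqrt p$ for $h$ not a square times a constant, valid once $(\deg h)^2 < p$, plus separate small-$p$ checks, or more elementarily via the argument that the nonsquare-valued case would force too many roots). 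From $fg$ being a perfect square in $\F_p[X]$ and both $f,g$ being squarefree quadratics, each irreducible factor of $f$ must also divide $g$, so $f$ and $g$ are associates.

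The case $\big(\frac{f(t)}{p}\big) = -\big(\frac{g(t)}{p}\big)$ for all $t$ reduces to the first case by a trick: fix any nonsquare $\epsilon \in \F_p^\times$ and apply the already-proven case to $f$ and $\epsilon g$, noting $\big(\frac{\epsilon g(t)}{p}\big) = -\big(\frac{g(t)}{p}\big) = \big(\frac{f(t)}{p}\big)$; then $f$ and $\epsilon g$ are associates, hence so are $f$ and $g$. The main obstacle I anticipate is the step asserting that a polynomial over $\F_p$ all of whose values are squares is (up to a constant) a perfect square — this is clean for large $p$ via Weil-type character bounds but the paper is committed to elementary methods, so I expect the author's proof either restricts attention to the cases that actually arise (where more is known about $f$ and $g$), or uses a slicker elementary argument comparing the value multisets directly; I would aim to mimic that elementary route, leaning only on the root-counting of polynomials and Euler's criterion as in Lemma~\ref{conicpoint}, rather than invoking Weil bounds.
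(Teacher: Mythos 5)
There is a genuine gap at the heart of your argument. After reducing to the product, everything hinges on the claim that a quartic $h=fg$ over $\F_p$ all of whose values are squares must be a constant times a perfect square in $\F_p[X]$. You do not establish this elementarily: the route you lean on is the Weil character-sum bound $\bigl|\sum_t \bigl(\frac{h(t)}{p}\bigr)\bigr| \le (\deg h - 1)\sqrt{p}$, which is far from the undergraduate toolkit this paper restricts itself to, is only decisive when $16 = (\deg h)^2 < p$, and leaves the ``separate small-$p$ checks'' unperformed; the ``more elementary'' alternative you gesture at is the root count for $(fg)^{(p-1)/2}-1$, which you yourself correctly observe is too weak, since that polynomial has degree $2(p-1)\ge p$ and so may vanish at all points without being zero. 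There are also slips in the case analysis: the claim that $\deg f\le 1$ or $\deg g\le 1$ forces both to be constant is false (take $f=g=X$), the assertion that if neither quadratic has a double root then each vanishes exactly once is wrong (a squarefree quadratic has $0$ or $2$ roots in $\F_p$), and the case in which both $f$ and $g$ have double roots is never completed. So as written the proposal does not prove the lemma.

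The missing idea is to compare $f^{(p-1)/2}$ and $g^{(p-1)/2}$ as polynomials rather than to work with the product $fg$. By Euler's criterion, $\bigl(\frac{f(t)}{p}\bigr)=f(t)^{(p-1)/2}$ and $\bigl(\frac{g(t)}{p}\bigr)=g(t)^{(p-1)/2}$ in $\F_p$ (including at zeros of $f$ or $g$), so the hypothesis $\bigl(\frac{f(t)}{p}\bigr)=\bigl(\frac{g(t)}{p}\bigr)$ for all $t$ says that $f^{(p-1)/2}-g^{(p-1)/2}$, a polynomial of degree at most $2\cdot\frac{p-1}{2}=p-1$, has $p$ roots; hence it is the zero polynomial, and $f^{(p-1)/2}=g^{(p-1)/2}$ identically. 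Unique factorization in $\F_p[X]$ (Lemma~\ref{assoclemma} with $n=(p-1)/2$) then gives that $f$ and $g$ are associates, with no case split on degrees, roots, or the size of $p$. The sign-flipped case is handled exactly as you propose, by replacing $g$ with $rg$ for a nonsquare $r$. The degree bound $\deg f,\deg g\le 2$ is what makes the difference have degree at most $p-1$; your product-based polynomial has twice that degree, which is precisely why your counting attempts stall.
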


\begin{remark}
The statement of this result uses the \emph{Legendre symbol} $\displaystyle\left(\frac{a}{p}\right)$.
If $a \in \F_p$ then the Legendre symbol can be defined as follows: 
\[
 \left(\frac{a}{p}\right) \, = \,
 \begin{cases}
 	\phantom{-}1 			&\text{if $a$ is a nonzero square in~$\F_p$,}\\
	-1			&\text{if $a$ is not a square in~$\F_p$,}\\
 	\phantom{-}0	\quad 	&\text{if $a=0$  in~$\F_p$.}
 \end{cases}
\]
The Legendre symbol provides a convenient notation for the expression of theorems such as quadratic
reciprocity or the simpler theorem that $\big(\frac{a b}{p}\big) = \big(\frac{a }{p}\big) \big(\frac{b}{p}\big)$.
This last equation is a consequence of
Euler's criterion:
$$
a^{(p-1)/{2}} \, = \, \left(\frac{a}{p}\right),
$$
where we view the values $0, 1, -1$ of the Legendre symbol as 
elements of~$\F_p$. Recall $p$ is an odd prime, so $0, 1, -1$ are
distinct.
\end{remark}

\begin{proof}
By Euler's criterion, $\big(\frac{f(t)}{p}\big)= f(t)^{(p-1)/2}$
and $\big(\frac{g(t)}{p}\big)= g(t)^{(p-1)/2}$.
So if $\big(\frac{f(t)}{p}\big) = \big(\frac{g(t)}{p}\big)$ for all 
$t \in \F_p$,
then every $t\in\F_p$ is a root of $f^{(p-1)/2}- g^{(p-1)/2}$.
Recall that a nonzero polynomial in $\F_p[T]$
of degree $d$ has at most $d$ roots since $\F_p$ is a field.
The polynomial 
$f^{(p-1)/2}- g^{(p-1)/2}$ has degree at most $p-1$, but has
$p$ roots.  We conclude that
 $f^{(p-1)/2}- g^{(p-1)/2}$ is the zero polynomial.

Since $\F_p[X]$ is a
unique factorization domain, and since $f^{(p-1)/2} = g^{(p-1)/2}$,
the polynomials $f$ and $g$ are associates by Lemma~\ref{assoclemma}.

If $\big(\frac{f(t)}{p}\big) = - \big(\frac{g(t)}{p}\big)$
for all $t \in \F_p$, pick a nonsquare $r$ in $\F_p$.
Observe that
$\big(\frac{f(t)}{p}\big) = \big(\frac{r g(t)}{p}\big)$ for all $t \in \F_p$,
so $f$ and $rg$ are associates by the conclusion of the previous case.
In particular, $f$ and $g$ are associates.
\end{proof}

\begin{thm}\label{T1}
Let $p$ be an odd prime and let $a, c, d\in \F_p$ be nonzero.
Then the system
\begin{equation*}
a U^2 + c W^2 = d Z^2, \qquad UW= V^2
\end{equation*}
has a nontrivial $\F_p$-solution.
\end{thm}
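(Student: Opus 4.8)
The plan is to follow the strategy already set up in Sections~\ref{s2} and~\ref{s3}: parametrize the conic defined by the first equation and then locate a value of the parameter at which the second equation $UW=V^2$ is also satisfied.

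First I would reduce to the case $d=1$ exactly as in the paragraph preceding the theorem: the system $ad^{-1}U^2 + cd^{-1}W^2 = Z^2$, $UW=V^2$ becomes, after multiplying the first equation by $d$, the system $aU^2+cW^2 = dZ^2$, $UW=V^2$, so a nontrivial $\F_p$-solution of the former is one of the latter; hence it suffices to treat $d=1$. By Lemma~\ref{conicpoint} (applied with the pair $a,c$ in place of $a,b$) there is a point $(x_0,y_0)$ with $ax_0^2+cy_0^2=1$, and then Lemma~\ref{qlemma} produces $q_1,q_2,q_3\in\F_p[T]$ with $aq_1^2+cq_2^2=q_3^2$, all nonzero, no two associates, and $\deg q_3 = 2$. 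For every $t\in\F_p$ the triple $(q_1(t),q_2(t),q_3(t))$ already satisfies the first equation, so it is enough to find $t_0\in\F_p$ for which $q_1(t_0)q_2(t_0)$ is a square in $\F_p$; then, choosing $v_0\in\F_p$ with $v_0^2 = q_1(t_0)q_2(t_0)$, the tuple $(U,V,W,Z)=(q_1(t_0),v_0,q_2(t_0),q_3(t_0))$ solves the whole system.

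The crux is producing such a $t_0$, and this is where Lemma~\ref{PA} together with the ``no two associates'' clause of Lemma~\ref{qlemma} does the work. If no such $t_0$ existed, then for every $t\in\F_p$ the value $q_1(t)q_2(t)$ would be a nonzero nonsquare; in particular $q_1(t),q_2(t)$ would both be nonzero and $\left(\frac{q_1(t)}{p}\right) = -\left(\frac{q_2(t)}{p}\right)$ for all $t$, so Lemma~\ref{PA} would force $q_1$ and $q_2$ to be associates, contradicting Lemma~\ref{qlemma}. Hence a suitable $t_0$ exists.

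The remaining point — which I expect to be the one genuinely fiddly step — is that this solution is nontrivial. Since $v_0^2 = q_1(t_0)q_2(t_0)$, the tuple vanishes only if $q_1(t_0)=q_2(t_0)=q_3(t_0)=0$, and this I would rule out by a short direct computation: from $q_3(t_0)=ct_0^2+a=0$ we get $t_0^2=-a/c$; substituting into $q_2(t_0)=-cy_0t_0^2-2ax_0t_0+ay_0=0$ and dividing by $2a$ (legitimate since $p$ is odd) gives $y_0=x_0t_0$; then $ax_0^2+cy_0^2 = ax_0^2+cx_0^2t_0^2 = ax_0^2-ax_0^2 = 0$, contradicting $ax_0^2+cy_0^2=1$. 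The main subtlety to keep in mind is that Lemma~\ref{PA} only yields a $t_0$ at which $q_1(t_0)q_2(t_0)$ is a square that may well be $0$, so nontriviality has to be argued separately rather than read off from $U=q_1(t_0)\neq 0$.
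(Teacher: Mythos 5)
Your proposal is correct and follows essentially the same route as the paper: reduce to $d=1$, parametrize via Lemmas~\ref{conicpoint} and~\ref{qlemma}, and use Lemma~\ref{PA} together with the fact that $q_1,q_2$ are not associates to find a suitable parameter value. The only divergence is at the end: the paper chooses $t$ with $\bigl(\frac{q_1(t)}{p}\bigr) \ne -\bigl(\frac{q_2(t)}{p}\bigr)$, which already forces $q_1(t),q_2(t)$ not both zero (since $0=-0$) and so yields nontriviality for free, whereas your weaker conclusion (that $q_1(t_0)q_2(t_0)$ is a square) makes the extra explicit computation you give necessary --- it is correct, just avoidable.
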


\begin{proof}
As mentioned above, we reduce to the case~$d=1$.
Parametrizing the conic $a U^2 + c W^2 =  Z^2$
as in Lemma \ref{qlemma} (using Lemma~\ref{conicpoint})
yields nonzero polynomials
 $q_1, q_2, q_3 \in \F_p[T]$ where
 $a q_1^2 + c q_2^2 = q_3^2$ and where $q_1$ and $q_2$ are not associates.

By Lemma~\ref{PA} and the fact that $q_1$ and $q_2$ are not associates,
there is a $t \in \F_p$ such that
$\big(\frac{q_1 (t)}{p}\big) \ne - \big(\frac{q_2(t)}{p}\big)$.
So $q_1(t)$ and $q_2(t)$ are not both zero
and $\big(\frac{q_1 (t) q_2(t)}{p}\big) \ne - 1$.
Thus $q_1 (t) q_2(t) = v^2$ for some $v\in\F_p$.
Hence $U=q_1(t), W= q_2(t), Z=q_3(t)$, $V=v$ is a nontrivial solution.
\end{proof}


\section{Solutions modulo prime powers} \label{s4}

Now we focus  on the $p$-local solvability of the system
\begin{equation}\label{msys}
a U^2 + c W^2 = d Z^2, \qquad UW= V^2
\end{equation}
where $a, c, d$ are nonzero integers.
A consequence of Theorem~\ref{T1} is that
if $p$ is an odd prime and if $p\nmid acd$
then this system
has a primitive solution modulo $p$. In this section we extend this result to
powers of~$p$. We also discuss the case of $p=2$. 

For the convenience of the reader, we state and prove the following
well-known result from elementary number theory.

\begin{prop} \label{henselpower}
Let $p$ be a prime and let $N$ and $r>0$ be integers such that
$p \nmid  r N$.
If $N$ is an $r$th power modulo $p$, then $N$ is an $r$th power
  modulo $p^{k}$ for all~$k\ge 1$.
\end{prop}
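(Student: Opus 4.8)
The plan is to argue by induction on $k$, the case $k = 1$ being exactly the hypothesis. This is the elementary substitute for the special case of Hensel's lemma one would normally invoke here (cf.\ Appendix~A). Assume $k \ge 1$ and that $a^r \equiv N \pmod{p^k}$ for some integer $a$; the goal is to produce an integer $b$ with $b^r \equiv N \pmod{p^{k+1}}$. First note that $p \nmid a$, since $p \mid a$ would force $p \mid a^r \equiv N \pmod p$, contradicting $p \nmid N$.

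Write $N - a^r = p^k m$ with $m \in \Z$, and look for $b$ in the form $b = a + p^k t$. The binomial theorem gives
$$ b^r = a^r + r\,a^{r-1} p^k t + \sum_{j=2}^{r} \binom{r}{j} a^{r-j} p^{jk} t^j. $$
For $j \ge 2$ we have $jk \ge 2k \ge k+1$ (here $k \ge 1$ is used), so every term in the sum is divisible by $p^{k+1}$, and hence $b^r \equiv a^r + r\,a^{r-1} p^k t \pmod{p^{k+1}}$. Consequently $b^r \equiv N \pmod{p^{k+1}}$ holds if and only if $r\,a^{r-1} p^k t \equiv p^k m \pmod{p^{k+1}}$, i.e.\ if and only if
$$ r\,a^{r-1}\, t \equiv m \pmod p. $$

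Since $p \nmid r$ and $p \nmid a$, the coefficient $r\,a^{r-1}$ is invertible modulo $p$, so this linear congruence has a solution $t$; for that $t$, the integer $b = a + p^k t$ satisfies $b^r \equiv N \pmod{p^{k+1}}$, which completes the induction. I do not expect a genuine obstacle in carrying this out: the only points requiring care are the inequality $jk \ge k+1$ for $j \ge 2$ (so that the binomial tail vanishes modulo $p^{k+1}$) and the fact that $r\,a^{r-1}$ is a unit modulo $p$, the latter being precisely where the hypothesis $p \nmid rN$ enters.
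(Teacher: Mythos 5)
Your proof is correct and is essentially identical to the paper's: both proceed by induction on $k$, write $N = a^r + p^k m$, take $b = a + p^k t$ with $t$ solving $r a^{r-1} t \equiv m \pmod p$ (valid since $p \nmid r a$), and use the binomial expansion modulo $p^{k+1}$. You simply spell out the details (why $p \nmid a$ and why the binomial tail vanishes) that the paper leaves implicit.
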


\begin{proof}
We proceed by induction on~$k$.
Suppose that $N\equiv a^r$ modulo $p^k$. Write $N = a^r + c p^k$, and
let $x$ be a solution to $r a^{r-1}x \equiv c$ modulo $p$.
Using the binomial expansion,
$$\left(a+xp^k\right)^r \equiv a^r + ra^{r-1} x p^k
\equiv a^r + c p^k  \pmod {p^{k+1}}.$$
\end{proof}

We now consider the main result of this section. 
A \emph{strong solution} modulo~$p^k$ to the system~(\ref{msys}) is a primitive 
solution $(u, v, w, z)$
to the associated congruences modulo~$p^k$ such that 
at least one of $au, cw, dz$ is nonzero modulo the prime~$p$. 

\begin{thm} \label{strongliftingtheorem}
Let $p$ be an odd prime, and let $a, c, d$ be nonzero integers. 
If 
\begin{equation*}
a U^2 + c W^2 = d Z^2, \qquad UW= V^2
\end{equation*}
has a strong solution modulo $p$, then it has a strong solution modulo $p^k$
for all~$k$. In particular, it is $p$-locally solvable.
\end{thm}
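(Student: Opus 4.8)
The plan is to prove this by induction on $k$, lifting a strong solution modulo $p^k$ to one modulo $p^{k+1}$, with the ``strong'' condition providing the unit that makes the lifting go through. Suppose $(u, v, w, z)$ is a strong solution modulo $p^k$, so $au^2 + cw^2 \equiv dz^2$ and $uw \equiv v^2$ modulo $p^k$, the tuple is primitive, and at least one of $au, cw, dz$ is nonzero modulo $p$. By symmetry assume $dz \not\equiv 0 \pmod p$; the cases where $au$ or $cw$ is a unit are handled by an entirely analogous argument (or by permuting the roles of the variables in the first equation). First I would note that, since $p \nmid dz$, in particular $p \nmid z$, so $z$ is invertible modulo every power of $p$.

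The key step is to adjust $z$ alone to repair the first congruence, then adjust $v$ alone to repair the second. Write $au^2 + cw^2 - dz^2 = p^k e$ for some integer $e$. I want to replace $z$ by $z' = z + p^k s$ so that $au^2 + cw^2 \equiv d(z')^2 \pmod{p^{k+1}}$; expanding, $d(z')^2 \equiv dz^2 + 2dz s\, p^k \pmod{p^{k+1}}$, so I need $2dz s \equiv e \pmod p$, which is solvable for $s$ precisely because $2dz$ is a unit modulo $p$ ($p$ is odd, $p \nmid dz$). This fixes the first equation modulo $p^{k+1}$ while changing $u, w$ not at all, hence not disturbing $uw \equiv v^2 \pmod{p^k}$. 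Next, write $uw - v^2 = p^k f$ and replace $v$ by $v' = v + p^k t$; then $(v')^2 \equiv v^2 + 2vt\, p^k \pmod{p^{k+1}}$, so I need $2vt \equiv f \pmod p$. The hard part — and the main obstacle to watch — is that this last congruence requires $v$ to be a unit modulo $p$, which is not part of the hypothesis. I would resolve this by a preliminary normalization: if $p \mid v$, then $p \mid uw$, so $p$ divides $u$ or $w$; if $p \mid u$ (say), then the first equation forces $cw^2 \equiv dz^2 \pmod p$, and since $dz^2 \not\equiv 0$ we get $p \nmid w$, and then $uw \equiv v^2 \pmod{p^k}$ with $p \mid u$ forces $p^2 \mid v^2$ hence $p \mid v$ consistently, but then primitivity is threatened only if $p$ divides all of $u,v,w,z$ — which it does not, since $p \nmid z$. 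In this branch one can instead lift by adjusting $u$ (a non-unit) is wrong; rather, observe $w$ is a unit, so solve $uw \equiv v^2$ for $u$: set $u' = u + p^k r$ with $w r \equiv f' \pmod p$ where $uw - v^2 = p^k f'$, using that $w$ is a unit. But changing $u$ disturbs the first equation, so one must order the two repairs correctly: repair $uw = v^2$ first by moving whichever of $u, w$ is a unit, then repair the first equation by moving $z$. Since $p \nmid z$ always, the $z$-repair is unconditional, so the clean argument is: \emph{first} fix $UW = V^2$ using a unit among $\{u, v, w\}$ — and at least one of these is a unit because $p \nmid uw$ would give a unit $w$, while $p \mid u$ or $p \mid w$ forces (via the first equation and $p \nmid z$) the other of $u, w$ to be a unit — \emph{then} fix the first equation by adjusting $z$.

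Finally I would check that the lifted tuple remains a strong solution: it is still primitive since $p \nmid z'$ (as $z' \equiv z \pmod{p^k}$ with $k \geq 1$), and $dz'$ is still nonzero modulo $p$ for the same reason, so the strong condition propagates. The base case $k = 1$ is the hypothesis. By induction the system has a strong solution — in particular a primitive solution — modulo $p^k$ for every $k$, and a real solution exists trivially (e.g.\ scale any nonzero solution of the first conic, which has real points since $aU^2 + cW^2 = dZ^2$ is an indefinite-or-definite ternary form always admitting nontrivial real points after adjusting signs — more simply, $UW = V^2$ and the first equation have a common real point by Theorem~\ref{T1}'s real analogue or by direct parametrization), so the system is $p$-locally solvable. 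The one genuinely delicate point, which I would state as a small lemma before the induction, is the dichotomy above: \emph{if $(u,v,w,z)$ is a strong solution modulo $p$ with $p \nmid dz$, then at least one of $u, w$ is a unit modulo $p$}; everything else is the routine Hensel-style single-variable lift already modeled by Proposition~\ref{henselpower}.
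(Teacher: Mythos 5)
Your Hensel-style induction is fine in the one case you actually carry out, namely $p\nmid dz$, but the reduction of the other cases to it ``by symmetry'' is where the argument breaks. The system is symmetric under exchanging $(a,U)$ with $(c,W)$, but not under exchanging $(d,Z)$ with either of them: $Z$ does not occur in $UW=V^2$ while $U$ and $W$ do, so ``permuting the roles of the variables in the first equation'' is not available. The hypothesis (plus the congruence $au^2+cw^2\equiv dz^2$) only guarantees that at least two of $au$, $cw$, $dz$ are units modulo $p$, and the case your write-up never covers is $au,cw$ units with $dz\equiv 0\pmod p$ (e.g.\ $p\mid d$, or $p\mid z$). This case is not hypothetical: the theorem is invoked in Proposition~\ref{locallysolvable} for odd primes $p\mid d$ with the strong solution $(m^2,m,1,0)$, where $z=0$. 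There your ``clean argument'' fails exactly at the sentence ``Since $p\nmid z$ always, the $z$-repair is unconditional'': what the repair needs is that $2dz$ be a unit, and it is not, so there is no variable outside the second equation left to absorb the error in the first congruence. The case can be rescued --- when $au$ and $cw$ are units, $uw\equiv v^2$ forces $v$ to be a unit too, so one can repair the first congruence by moving $u$ (or $w$) and then repair $UW=V^2$ by moving $v$, which does not feed back into the first equation --- but that is a genuinely different repair scheme from the one you describe, and it has to be stated and checked (including that primitivity and strongness propagate) for the induction to close. A side remark: your closing comment about real solutions is both unnecessary ($p$-local solvability as defined in the paper concerns only solutions modulo $p^k$) and incorrect as stated, since $aU^2+cW^2=dZ^2$ can be definite.

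For comparison, the paper's proof avoids this case analysis altogether: since at least one of $au_0$, $cw_0$ is a unit, it scales the mod-$p$ solution so that $u_0=1$, observes that $a^{-1}(dz^2-cv^4)\equiv 1\pmod p$ is a fourth power, lifts it to a fourth power $m^4$ modulo $p^k$ by Proposition~\ref{henselpower}, and writes down the strong solution $(m^2,mv,v^2,z)$ directly --- no induction on the system itself, and no dependence on whether $dz$ is a unit.
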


\begin{proof}
Let $(u_0, v_0, w_0, z_0)$ be a strong solution modulo $p$. Since $a u_0^2 + c w_0^2 = d z_0^2$,
at least two of $a u_0, c w_0, d z_0$ must be nonzero modulo $p$. 
By symmetry we can assume that $au_0$
is nonzero modulo $p$. Fix a power $p^k$ of $p$. 
Since $p\nmid a$ and $p\nmid u_0$, we can choose inverses
$a^{-1}, u_0^{-1} \in \Z$ modulo $p^k$.

Let $v = v_0 \, u_0^{-1},  \; w = w_0 \, u_0^{-1},$ and $z = z_0\,  u_0^{-1}$. 
Then $(1, v, w, z)$ also solves the system modulo~$p$. 
So $w \equiv v^2$ modulo $p$, and hence $a + c v^4 \equiv dz^2$ modulo $p$.
Since $a^{-1} \left( d z^2 - c v^4\right) \equiv 1$ modulo $p$, and since $1$ is a fourth power,
Proposition~\ref{henselpower} guarantees the existence of an $m\in \Z$
such that $a^{-1} \left( d z^2 - c v^4\right) \equiv m^4$ modulo~$p^k$.
In other words, $a m^4 + c v^4 \equiv d z^2$ modulo~$p^k$, so
$(m^2, m v, v^2, z)$ is a solution modulo~$p^k$.
It is a strong solution since $m$ is nonzero modulo~$p$.
\end{proof}

Theorem~\ref{strongliftingtheorem} and Theorem~\ref{T1}
yield the following:

\begin{cor} \label{plocalcor}
If $p$ is an odd prime such that $p\nmid acd$
then the system
\begin{equation*}
a U^2 + c W^2 = d Z^2, \qquad UW= V^2
\end{equation*}
is $p$-locally solvable: it has primitive  solutions modulo $p^k$ for all~$k$.
\end{cor}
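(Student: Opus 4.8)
The plan is to assemble the corollary directly from Theorem~\ref{T1} and Theorem~\ref{strongliftingtheorem}, so the only real work is to upgrade the nontrivial $\F_p$-solution produced by the former into a \emph{strong} solution modulo $p$ in the sense required by the latter. First I would reduce $a, c, d$ modulo $p$; since $p \nmid acd$ they remain nonzero in $\F_p$, so Theorem~\ref{T1} applies and yields a nontrivial solution $(u_0, v_0, w_0, z_0) \in \F_p^4$ of the system. Viewing these residues as integers, this is already a primitive solution modulo $p$: primitivity modulo $p$ for a $4$-tuple means only that the tuple is not congruent to $(0,0,0,0)$ modulo $p$, which is exactly nontriviality in $\F_p$.

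Next I would verify the strong-solution condition. Suppose $u_0 \equiv w_0 \equiv z_0 \equiv 0 \pmod p$. Then the second equation $UW = V^2$ forces $v_0^2 \equiv 0$, hence $v_0 \equiv 0 \pmod p$, contradicting nontriviality. Therefore at least one of $u_0, w_0, z_0$ is nonzero modulo $p$, and since $p \nmid acd$, the corresponding one of $a u_0, c w_0, d z_0$ is nonzero modulo $p$. Thus $(u_0, v_0, w_0, z_0)$ is a strong solution modulo $p$.

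Finally, I would invoke Theorem~\ref{strongliftingtheorem} — which applies because $p$ is an odd prime and $a, c, d$ are nonzero integers — to conclude that the system has a strong, in particular primitive, solution modulo $p^k$ for every $k \ge 1$. This is precisely the assertion of $p$-local solvability.

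I do not expect a genuine obstacle here: the corollary is essentially a bookkeeping combination of the two theorems. The only points requiring any care are the observation that a nontrivial $\F_p$-point of the system cannot be supported entirely on the $V$-coordinate (immediate from $UW = V^2$), and the identification of ``primitive modulo $p$'' with ``not congruent to $(0,0,0,0)$ modulo $p$''; both follow at once from the shape of the equations and the definitions already in place.
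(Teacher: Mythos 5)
Your proposal is correct and follows essentially the same route as the paper, which obtains the corollary by combining Theorem~\ref{T1} with Theorem~\ref{strongliftingtheorem}; the paper leaves the bridging step implicit, and you supply it correctly by noting that a nontrivial $\F_p$-solution cannot have $u\equiv w\equiv z\equiv 0$ (else $v^2=uw$ forces $v\equiv 0$), so under $p\nmid acd$ it is automatically a strong solution modulo $p$.
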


For $p=2$ the situation is more subtle. For example, the system
\begin{equation*}
U^2 + 3 W^2 = 7 Z^2, \qquad UW= V^2
\end{equation*}
has solution $(1, 1, 1, 2)$ modulo $2$. In fact, $(1, 1, 1, 2)$ is a solution modulo $2^3$.
However, the system has no primitive solution modulo $2^4$.
The following exercise provides a shortcut for verifying the
nonexistence of solutions modulo $16$.

\begin{exercise}\label{exerciseonmod16}
Consider the system $aU^2 + c W^2 = d Z^2, \; \; UW= V^2$ where $a, c, d \in\Z$ are odd.
Show that if there is a primitive solution
modulo $16$, then there is a solution $(u, v, w, z)$ with $u, v, w\in\{0, 1\}$ and $z\in\{0, 1, 2, 3\}$.
\end{exercise}

A difficulty with extending Theorem~\ref{strongliftingtheorem}
to $p=2$ is the failure of Proposition~\ref{henselpower}
to generalize: the integer $3$ is a fourth power
modulo $p=2$ but not modulo~$2^k$ if $k>1$. The following provides the needed
 variant to Proposition~\ref{henselpower}.

\begin{prop}\label{fourthpowermod2}
If $N \equiv 1$ modulo $2^4$, then $N$ is a fourth power modulo $2^k$
for all~$k\ge1$.
\end{prop}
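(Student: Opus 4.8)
The plan is to prove this by induction on $k$, exactly mirroring the argument of Proposition~\ref{henselpower}, but starting the induction at $k=4$ rather than $k=1$, since that is the level at which fourth powers stabilize modulo powers of~$2$. For $1\le k\le 4$ the statement is immediate: if $N\equiv 1\pmod{2^4}$ then $N\equiv 1\pmod{2^k}$ as well, and $1=1^4$ is a fourth power. So the real content is the inductive step from $k$ to $k+1$ for $k\ge 4$.

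For the inductive step, suppose $N\equiv a^4\pmod{2^k}$ with $k\ge 4$; we may clearly take $a$ odd. Write $N=a^4+c\,2^k$. We seek $x\in\Z$ so that $(a+x\,2^{k-2})^4\equiv N\pmod{2^{k+1}}$. Expanding by the binomial theorem,
$$(a+x\,2^{k-2})^4 = a^4 + 4a^3 x\,2^{k-2} + \binom{4}{2}a^2x^2\,2^{2(k-2)} + \cdots = a^4 + a^3 x\,2^{k} + (\text{terms divisible by } 2^{2k-3}),$$
and since $k\ge 4$ we have $2k-3\ge k+1$, so all the higher terms vanish modulo $2^{k+1}$. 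Thus $(a+x\,2^{k-2})^4\equiv a^4 + a^3x\,2^{k}\pmod{2^{k+1}}$, and we need $a^3 x\,2^k\equiv c\,2^k\pmod{2^{k+1}}$, i.e. $a^3 x\equiv c\pmod 2$. Since $a$ is odd, $x\equiv c\pmod 2$ solves this, completing the induction.

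The one subtlety — and the only place requiring a little care — is the exponent shift: one must perturb $a$ by a multiple of $2^{k-2}$ rather than $2^{k-1}$ (which would not help, since $4$ is even) or $2^{k-3}$ (which would be too crude once we track the cross terms). The bookkeeping that $2(k-2)\ge k+1$ for $k\ge 4$ is exactly what forces the induction to start at $k=4$, and it also explains why the hypothesis $N\equiv 1\pmod{16}$ (rather than merely $N\equiv 1\pmod 2$) is genuinely needed, in contrast to the odd-prime case of Proposition~\ref{henselpower}. I do not anticipate any serious obstacle beyond getting this exponent arithmetic right; everything else is a routine binomial expansion.
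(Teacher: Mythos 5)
Your proof is correct and follows essentially the same route as the paper's: induction starting at $k=4$, perturbing the root by a multiple of $2^{k-2}$, and using the binomial expansion (with the even binomial coefficient $\binom{4}{2}=6$ making the quadratic term vanish modulo $2^{k+1}$). The only cosmetic differences are that you spell out the trivial base cases $k\le 4$ and solve $a^3x\equiv c\pmod 2$ for $x$, where the paper simply takes $x=c$ using $a^3\equiv 1\pmod 2$.
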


\begin{proof}
We use induction for~$k\ge 4$.
Suppose $N\equiv a^4$ modulo~$2^k$
where $k\ge 4$. Write $N = a^4 + c 2^k$.
Using the binomial expansion and the fact that $a^3\equiv 1$ modulo $2$,
$$(a+c 2^{k-2})^4 \equiv a^4 + 4a^{3} c 2^{k-2}  \equiv a^4 + c 2^k  \pmod {2^{k+1}}.$$
\end{proof}

Using this we can prove the following. Its proof is similar to that of Theorem~\ref{strongliftingtheorem}.

\begin{thm} \label{strongliftingtheorem2}
Let $a, c, d$ be nonzero integers. 
If the system
\begin{equation*}
a U^2 + c W^2 = d Z^2, \qquad UW= V^2
\end{equation*}
has a strong solution modulo $2^4$, then it has a strong solution modulo $2^k$
for all~$k$.
\end{thm}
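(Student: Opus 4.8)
The plan is to mimic the proof of Theorem~\ref{strongliftingtheorem} almost verbatim, substituting Proposition~\ref{fourthpowermod2} for Proposition~\ref{henselpower} at the one place where a fourth-power lifting argument is invoked. First I would take a strong solution $(u_0, v_0, w_0, z_0)$ modulo $2^4$. Since $a u_0^2 + c w_0^2 = d z_0^2$ holds modulo $2^4$, at least two of $a u_0, c w_0, d z_0$ are nonzero modulo $2$ — here the argument is slightly different from the odd case because squaring is not injective mod $2$, so I would argue directly: if two of them vanished mod $2$ the third would too, forcing $\gcd$ divisible by $2$ (using that $a,c,d$ are odd), contradicting primitivity. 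By the symmetry between the roles of $aU^2$ and $cW^2$ I can assume $a u_0$ is odd, hence both $a$ and $u_0$ are odd, so $u_0$ is invertible modulo $2^k$ for every $k$.

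Next, fix $2^k$ and set $v = v_0 u_0^{-1}$, $w = w_0 u_0^{-1}$, $z = z_0 u_0^{-1}$, where the inverse is taken modulo $2^k$. Then $(1, v, w, z)$ solves the system modulo $2^4$, so in particular $w \equiv v^2$ and $a + c v^4 \equiv d z^2$ modulo $2^4$. Therefore $a^{-1}(dz^2 - cv^4) \equiv 1$ modulo $2^4$, and Proposition~\ref{fourthpowermod2} supplies an $m \in \Z$ with $a^{-1}(dz^2 - cv^4) \equiv m^4$ modulo $2^k$. Rearranging gives $a m^4 + c v^4 \equiv d z^2$ modulo $2^k$, so $(m^2, mv, v^2, z)$ solves the system modulo $2^k$, and it is a strong solution since $m$ is odd (as $m^4$ is odd modulo $2^4$). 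The final sentence records that $p$-local solvability at $2$ follows, since strong solutions modulo $2^k$ for all $k$ are in particular primitive solutions modulo $2^k$ for all $k$.

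The one genuinely new wrinkle — and the step I expect to need the most care — is verifying that a strong solution forces $a u_0$ (or by symmetry $c w_0$) to be odd while keeping track of the hypothesis that $a, c, d$ are all odd; in the odd-prime proof this came for free from "at least two of $au_0, cw_0, dz_0$ are nonzero mod $p$," but mod $2$ one must check that the congruence $a u_0^2 + c w_0^2 \equiv d z_0^2 \pmod 2$ together with primitivity and oddness of $a,c,d$ actually rules out having both $u_0$ and $w_0$ even. (If $u_0, w_0$ are even then $d z_0^2$ is even, so $z_0$ is even, contradicting primitivity; and the definition of strong solution already guarantees one of $au_0, cw_0, dz_0$ is odd. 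So the real content is just: among $u_0, w_0, z_0$ at least two are odd, hence we may as well assume $u_0$ is odd after possibly swapping the roles of the first two terms.) Everything else is the routine binomial bookkeeping already carried out in Theorem~\ref{strongliftingtheorem}, so I would not grind through it a second time but simply point to that proof. Note also that unlike Theorem~\ref{strongliftingtheorem2}'s odd-prime analogue, here there is no companion to Corollary~\ref{plocalcor}: having a strong solution modulo $2^4$ is a genuine hypothesis that must be checked case by case, which is presumably why the theorem is stated in conditional form.
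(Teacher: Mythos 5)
Your proposal is correct and matches the paper exactly: the paper gives no separate argument for this theorem, stating only that its proof is similar to that of Theorem~\ref{strongliftingtheorem} with Proposition~\ref{fourthpowermod2} standing in for Proposition~\ref{henselpower}, which is precisely your plan, and your execution of the scaling and fourth-power lifting is sound. One small repair: the theorem does not assume $a, c, d$ odd, so drop the parenthetical appeals to that hypothesis---if two of $au_0, cw_0, dz_0$ vanish modulo $2$ then so does the third, which contradicts the definition of a strong solution directly (no primitivity argument needed), and from there your main line (at least two of $au_0, cw_0, dz_0$ odd; by the $U \leftrightarrow W$, $a \leftrightarrow c$ symmetry take $au_0$ odd, hence $a$ and $u_0$ odd) goes through for arbitrary nonzero $a, c, d$.
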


In some cases there is no distinction between primitive
and strong solutions:

\begin{lemma}\label{lemma.primitive.strong}
Suppose $p$ is a prime, $k\ge 2$, and $a, c, d\in\Z$ are such that $p^2\nmid acd$.
Then every primitive solution to $(\ref{msys})$ modulo $p^k$ is a strong solution.
\end{lemma}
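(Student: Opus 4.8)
The plan is to argue by contradiction: suppose $(u,v,w,z)$ is a primitive solution to (\ref{msys}) modulo $p^k$ (with $k\ge 2$) that is \emph{not} strong, meaning $p\mid au$, $p\mid cw$, and $p\mid dz$. Since $p^2\nmid acd$, the prime $p$ divides at most one of $a,c,d$. First I would split into cases according to whether $p$ divides $a$, $c$, or $d$, or none of them; in each case the hypothesis $p\mid au$, etc., forces $p$ to divide most of $u,v,w,z$, and then primitivity (the gcd of $u,v,w,z$ is prime to $p$) will be violated.

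In more detail: suppose first that $p\nmid acd$. Then $p\mid au,cw,dz$ immediately gives $p\mid u$, $p\mid w$, $p\mid z$. From $UW=V^2$ modulo $p^k$ and $k\ge 2$ we get $p^2\mid uw$, hence $p^2\mid v^2$, hence $p\mid v$. So $p$ divides all of $u,v,w,z$, contradicting primitivity. Now suppose exactly one of $a,c,d$ is divisible by $p$; say $p\mid a$ (the other two cases are symmetric, with the case $p\mid d$ handled the same way via the first equation). Since $p^2\nmid acd$ we have $p\nmid c$ and $p\nmid d$, so $p\mid cw$ forces $p\mid w$ and $p\mid dz$ forces $p\mid z$. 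Again $UW=V^2$ modulo $p^k$ with $k\ge 2$ gives $p^2\mid uw$; since $p\mid w$ this does not yet bound $v$, so instead I would use $w\equiv 0$: then $v^2=uw\equiv 0$ modulo $p$, so $p\mid v$. It remains to show $p\mid u$. For this, reduce the first equation modulo $p$: $a u^2+c w^2\equiv d z^2$, and since $p\mid a$, $p\mid w$, $p\mid z$, we get $0\equiv 0$, which gives nothing. So instead reduce modulo $p^2$ (legitimate since $k\ge 2$): $au^2+cw^2\equiv dz^2\pmod{p^2}$; here $p\mid a$ but $p^2\nmid a$, and $p^2\mid w^2$, $p^2\mid z^2$, so $au^2\equiv 0\pmod{p^2}$, forcing $p\mid u^2$ hence $p\mid u$. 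Thus $p$ divides $u,v,w,z$, contradicting primitivity. The case $p\mid d$ is identical after moving $dZ^2$ to the other side.

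The main obstacle is the bookkeeping in the case $p\mid a$ (or $p\mid d$): one must be careful that a single reduction modulo $p$ is insufficient to conclude $p\mid u$, and that passing to modulo $p^2$ — which is exactly what the hypothesis $k\ge 2$ permits — together with $p^2\nmid a$ does the job. I would write the argument cleanly by first disposing of the congruence $p\mid v$ (which uses only $k\ge 2$ and $UW=V^2$ once $p$ divides $u$ or $w$), then treating the three symmetric sub-cases $p\mid a$, $p\mid c$, $p\mid d$ in parallel, and finally the sub-case $p\nmid acd$. Each sub-case ends with ``$p$ divides all four coordinates,'' contradicting primitivity, which completes the proof.
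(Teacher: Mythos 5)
Your proof is correct and takes essentially the same route as the paper's: in both, the decisive step is to reduce $aU^2+cW^2\equiv dZ^2$ modulo $p^2$ (which is where $k\ge 2$ enters) and play the resulting divisibility off against the hypothesis $p^2\nmid acd$. The only difference is organizational — the paper fixes a coordinate prime to $p$ and contradicts $p^2\nmid acd$ directly, while you case on which coefficient $p$ divides and end by contradicting primitivity; these are the same computation read in opposite directions.
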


\begin{proof}
Let $(u, v, w, z)$ be a primitive solution modulo~$p^k$
that is not a strong solution. We derive a contradiction in the case where $p\nmid u$; the other
cases are similar. Since $(u, v, w, z)$ is not a strong solution, $p\mid a$.
Thus $p\nmid cd$. So $p$ divides both $w$ and~$z$ since $(u, v, w, z)$ is not a strong solution.
Looking at $a u^2 + c w^2 \equiv d z^2$ modulo~$p^2$ gives us that
$a u^2 \equiv 0$ modulo $p^2$. Thus $p^2\mid a$, a contradiction.
\end{proof}

\begin{cor}\label{characterizationeasycase}
Let $p$ be a prime, and let $a, c, d$ be integers such that $p^2 \nmid acd$.
If $p$ is odd then the system $(\ref{msys})$ is $p$-locally solvable if 
and only if it possesses a strong solution modulo $p$. If $p=2$, the 
system is $p$-locally solvable if and only if it possesses a strong 
solution modulo $2^4$.
\end{cor}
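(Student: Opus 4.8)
The plan is to assemble the corollary from three results already in hand: the lifting Theorems~\ref{strongliftingtheorem} and~\ref{strongliftingtheorem2}, which promote a strong solution modulo a single prime power to strong solutions modulo every power of~$p$, and Lemma~\ref{lemma.primitive.strong}, which says that under the hypothesis $p^2\nmid acd$ every primitive solution modulo~$p^k$ with $k\ge 2$ is automatically strong. Both implications then reduce to short bookkeeping arguments. Throughout I would treat the odd case and the case $p=2$ in parallel, the only difference being whether the relevant modulus is $p$ or $2^4$.

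For the ``if'' direction, suppose first that $p$ is odd and that~(\ref{msys}) has a strong solution modulo~$p$. Theorem~\ref{strongliftingtheorem} then produces strong solutions modulo~$p^k$ for every~$k$; since a strong solution is by definition primitive, this is exactly $p$-local solvability. If $p=2$ and there is a strong solution modulo~$2^4$, Theorem~\ref{strongliftingtheorem2} likewise yields strong---hence primitive---solutions modulo~$2^k$ for all~$k$, so the system is $2$-locally solvable.

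For the ``only if'' direction, assume~(\ref{msys}) is $p$-locally solvable, so it has a primitive solution modulo~$p^k$ for every~$k$. Take $k=2$ when $p$ is odd and $k=4$ when $p=2$; in either case $k\ge 2$, so Lemma~\ref{lemma.primitive.strong} (applicable because $p^2\nmid acd$) shows this primitive solution is in fact a strong solution modulo~$p^2$, respectively modulo~$2^4$. In the case $p=2$ this is already the desired conclusion. In the odd case I would finish by reducing the strong solution $(u,v,w,z)$ modulo~$p^2$ down to a solution modulo~$p$: the defining congruences continue to hold modulo~$p$; the ``strong'' condition that one of $au,cw,dz$ be nonzero modulo~$p$ is literally a statement about residues modulo~$p$ and so survives; and that same condition forces one of $u,w,z$ to be a unit modulo~$p$, which keeps the reduced tuple primitive. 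Hence there is a strong solution modulo~$p$, as needed.

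There is no genuinely hard step here; the only point that needs a moment's care is the last one---verifying that passing from modulus $p^2$ to modulus $p$ preserves both primitivity and strongness---together with checking that the exponent fed into Lemma~\ref{lemma.primitive.strong} is at least~$2$ in each case. Everything else is a direct appeal to the lifting theorems.
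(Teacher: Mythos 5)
Your proposal is correct and follows essentially the same route as the paper: the lifting Theorems~\ref{strongliftingtheorem} and~\ref{strongliftingtheorem2} give the ``if'' direction, while the ``only if'' direction extracts a primitive solution modulo a power of $p$ with exponent at least $2$, upgrades it to a strong solution via Lemma~\ref{lemma.primitive.strong}, and (for odd $p$) observes that the same tuple is a strong solution modulo $p$. The only cosmetic difference is your choice of exponent ($p^2$ for odd $p$ rather than the paper's uniform $p^4$), which changes nothing of substance.
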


\begin{proof}
One direction follows from Theorems~\ref{strongliftingtheorem} and~\ref{strongliftingtheorem2}.
For the other direction, suppose (\ref{msys}) is $p$-locally solvable.
So there is a primitive solution modulo~$p^4$.
By Lemma~\ref{lemma.primitive.strong}, there is a strong solution modulo~$p^4$. 
For the case where $p$ is odd, observe that such a solution is also a strong solution modulo~$p$.
\end{proof}

The general case where $p^2$ is allowed to divide $acd$ will be considered in 
Section~\ref{s6}.


\section{Counterexamples to the Hasse principle}\label{s5}

The goal of this section is to identify counterexamples to the Hasse principle.  These
are systems that are locally solvable but not globally solvable: they
lack
nontrivial $\Z$-solutions.

We start with the question of local solvability.

\begin{prop}\label{locallysolvable}
Suppose
\begin{enumerate}
\item
$q$ and $d$ are relatively prime nonzero integers and $q$ is positive,
\item
$q \equiv 1$ modulo $16$,
\item
$d$ is a square modulo $p$ for all primes $p\mid q$, and
\item
$q$ is a fourth power modulo~$p$ for all odd primes $p\mid d$.
\end{enumerate}
Then the following system is locally solvable:
\begin{equation*}
U^2 - q  W^2 = d Z^2, \qquad UW= V^2
\end{equation*}
\end{prop}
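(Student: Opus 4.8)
The plan is to verify the three conditions for local solvability separately: real solvability, $p$-local solvability for odd primes $p$, and $2$-local solvability. Throughout, write the system as $U^2 - qW^2 = dZ^2$, $UW = V^2$, so $a = 1$, $c = -q$.

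Real solvability is immediate: since $q > 0$, take $W = 0$, $V = 0$, and then $U^2 = dZ^2$ has a nontrivial real solution regardless of the sign of $d$ (if $d > 0$ take $U = \sqrt{d}$, $Z = 1$; if $d < 0$ take $U = 1$, $Z = 1/\sqrt{-d}$, or more simply note $d$ can be scaled away over $\R$). Actually the cleanest phrasing: over $\R$ one can always solve $U^2 - qW^2 = dZ^2$ nontrivially since $q > 0$ lets us realize any value of $dZ^2$.

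For an odd prime $p$, I would split into cases according to how $p$ meets $q$ and $d$. If $p \nmid qd$, then $p \nmid acd$ and Corollary~\ref{plocalcor} applies directly. If $p \mid q$, then by hypothesis (3) $d$ is a square mod $p$, say $d \equiv e^2$; then $(u,v,w,z) = (e, 0, 0, 1)$ gives $u^2 - qw^2 = e^2 \equiv dz^2$ and $uw = 0 = v^2$ modulo $p$, and this is a strong solution since $dz^2 \equiv d \not\equiv 0$ (as $\gcd(q,d)=1$ forces $p \nmid d$); then Theorem~\ref{strongliftingtheorem} lifts it. If $p \mid d$ (so $p \nmid q$ by coprimality), then by hypothesis (4) $q$ is a fourth power mod $p$, say $q \equiv m^4$; take $(u,v,w,z) = (m^2, m, 1, 0)$, which satisfies $u^2 - qw^2 = m^4 - q \equiv 0 \equiv dz^2$ and $uw = m^2 = v^2$ modulo $p$, and it is strong since $u^2 = m^4 \equiv q \not\equiv 0$ modulo $p$; again Theorem~\ref{strongliftingtheorem} finishes. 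These cases are exhaustive since $q$ and $d$ are coprime.

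For $p = 2$, by Theorem~\ref{strongliftingtheorem2} it suffices to exhibit a strong solution modulo $2^4$. Here hypothesis (2), $q \equiv 1 \pmod{16}$, does the work: take $(u,v,w,z) = (1, 1, 1, 0)$. Then $uw = 1 = v^2$, and $u^2 - qw^2 = 1 - q \equiv 0 \pmod{16}$, which equals $dz^2 = 0$; and it is strong since $u^2 = 1$ is a unit mod $2$. So the system is $2$-locally solvable. The main obstacle — really the only nonroutine point — is making sure the case division for odd primes is genuinely exhaustive and that in each case the constructed solution is \emph{strong} (i.e.\ some coordinate among $au$, $cw$, $dz$ is a unit mod $p$), which is where coprimality of $q$ and $d$ gets used; the lifting itself is handed to us by the theorems of Section~\ref{s4}.
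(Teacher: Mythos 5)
Your handling of the arithmetic — which is the substance of the proposition — is exactly the paper's argument: the same case split (odd $p\nmid qd$ via Corollary~\ref{plocalcor}; $p\mid q$ with the strong solution $(e,0,0,1)$ where $e^2\equiv d$; odd $p\mid d$ with $(m^2,m,1,0)$ where $m^4\equiv q$; and $(1,1,1,0)$ modulo $16$ for $p=2$), the same use of $\gcd(q,d)=1$ to certify that each solution is strong, and the same appeal to Theorems~\ref{strongliftingtheorem} and~\ref{strongliftingtheorem2} for lifting.

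The one step that does not work as written is real solvability when $d<0$. Setting $W=V=0$ reduces the first equation to $U^2=dZ^2$, which for $d<0$ has only the trivial real solution, and your proposed $(U,Z)=(1,1/\sqrt{-d})$ gives $U^2=1$ while $dZ^2=-1$. Your fallback remark that $U^2-qW^2$ takes every real value (true, since $q>0$ makes the form indefinite) also quietly ignores the second equation, which requires $UW=V^2\ge 0$. The fix is easy and is what the paper does: put the nonzero value on the $q$ side and kill the $d$ side, e.g.\ $(u,v,w,z)=\bigl(\sqrt{q},\,\sqrt[4]{q},\,1,\,0\bigr)$, which satisfies both equations for either sign of $d$; alternatively, for $d<0$ one can take $(0,0,\sqrt{-d/q},1)$. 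With that repair your proof is complete and coincides with the paper's.
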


\begin{proof}
Observe that $(u, v, w, z)=\left(q^{1/2}, q^{1/4}, 1, 0\right)$ is a real solution.
For all $p\nmid 2 q d$, the system is $p$-locally solvable by Corollary~\ref{plocalcor}.
Observe that $(u, v, w, z)=\left(1, 1, 1, 0\right)$
is a strong  solution modulo 16.
By Theorem~\ref{strongliftingtheorem2}, the system is $2$-locally solvable.

Suppose $p\mid q$ (so $p$ is odd). Let $m$ be such that $m^2\equiv d$ modulo $p$.
Then $(u, v, w, z)=(m, 0, 0, 1)$ is a solution modulo $p$. Since $d$ and $q$ are
relatively prime, $p\nmid d$. Thus the solution is strong.
By Theorem~\ref{strongliftingtheorem}, the system is $p$-locally solvable.

Suppose $p\mid d$ is odd. Let $m$ be such that $m^4\equiv q$ modulo $p$.
Then $(u, v, w, z)=(m^2,m, 1, 0)$ is a solution modulo $p$. 
The solution is strong since $p\nmid q$.
By Theorem~\ref{strongliftingtheorem}, the system is $p$-locally solvable.
\end{proof}

Now we will find systems that have no nontrival $\Z$-solutions. 
Our examples will rely on the following key lemma:

\begin{lemma}\label{redZR} 
Let $d$ be a nonzero square-free integer, and let $q\equiv 1$ modulo $8$ 
be a prime not dividing $d$.
If the system $U^2 - q  W^2 = d Z^2, \;\; UW= V^2$
has a nontrivial $\Z$-solution, 
then $d$ is a fourth power modulo~$q$.
\end{lemma}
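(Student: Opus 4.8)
The plan is to take a nontrivial $\Z$-solution $(u,v,w,z)$, clear common factors so it is primitive, and then analyze it modulo powers of $q$. First I would dispose of degenerate cases: if $z=0$ then $u^2 = qw^2$, and since $q$ is prime (indeed square-free) this forces $u=w=0$, hence $v=0$, contradicting nontriviality; so $z\ne 0$. Similarly if $w = 0$ then $v^2 = uw = 0$, so $v=0$ and $u^2 = dz^2$; since $d$ is square-free this forces $z = 0$ unless $d=1$, but then $d$ is trivially a fourth power mod $q$ — actually I should keep $w\ne 0$ as the generic case and handle $w=0$ or $u=0$ separately and quickly. The substantive case is $uvwz$ all nonzero.

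Next I would pass to the equation $U^2 - qW^2 = dZ^2$ modulo $q$, which reads $u^2 \equiv dz^2 \pmod q$. If $q \nmid z$ then $d \equiv (uz^{-1})^2$ is a square mod $q$, which is a start but not yet a \emph{fourth} power. To upgrade, the key is to use the second equation $uw = v^2$: since $q\equiv 1 \pmod 8$, $2$ is a square mod $q$, and the structure $U^2 - qW^2$ together with $UW = V^2$ is exactly what lets one rewrite things as a quartic. Concretely, from $uw = v^2$ one expects that $u$ and $w$ are, up to squares and the prime $q$, themselves squares; I would argue using primitivity and $\gcd$ considerations that one can write $u = \pm r s^2$, $w = \pm r t^2$ style relations, or more cleanly: substitute to get $u^2 - q w^2 = d z^2$ with $uw$ a square, and observe that modulo $q$ we have $u^2 \equiv dz^2$, so $d \equiv u^2 z^{-2}$, and then I need $u z^{-1}$ itself to be a square mod $q$ — which should follow because $v^2 = uw$ and $w \equiv u^{-1} v^2$, combined with $u^2 - qw^2 = dz^2 \equiv 0$... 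I think the right move is: $u \equiv \pm \sqrt q\, w$ cannot happen mod $q$ since $q\equiv 0$; rather work modulo $q^2$ or use that $d z^2 = u^2 - qw^2$ shows $q \mid (u - ?)$. Let me instead follow the cleaner route: set $X = u$, $Y = $ something so that $d Z^2 = X^4 - qY^4$ type identity emerges from eliminating $V, W$ via $W = V^2/U$, giving $U^3 W = U^2 V^2$ hence... the honest elimination is $U\cdot UW = U V^2$, i.e. substituting $W = V^2/U$ into the first equation and clearing denominators yields $U^3 - qUV^2\cdot(V^2/U)\cdots$; the paper's Section~\ref{s6} presumably shows this system is equivalent to $X^4 - qY^4 = dZ^2$, so in the proof I would perform that substitution directly: from $uw=v^2$ and $u \ne 0$, multiply $u^2 - qw^2 = dz^2$ by $u^2$ to get $u^4 - q(uw)^2 = d(uz)^2$, i.e. $u^4 - qv^4 = d(uz)^2$. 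Now reduce mod $q$: $u^4 \equiv d(uz)^2 \pmod q$. If $q \nmid u$ then $q\nmid v^4 \cdot$(stuff) — wait, need $q\nmid u$: if $q\mid u$ then $q \mid v^2$ so $q\mid v$, and then $q^2 \mid u^4 - qv^4 + \cdots$, forcing $q \mid uz$ hence (as $q\nmid z$ from the square-free argument, or handle $q\mid z$) $q\mid u$... this threatens primitivity, giving a descent/contradiction. So in the main case $q\nmid u$, and then $d \equiv u^4 (uz)^{-2} = (u^2 (uz)^{-1})^2 \cdot u^0$... hmm, $u^4/(uz)^2 = u^2/z^2$, so $d \equiv (u/z)^2$, only a square again.

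So the genuinely clever part — and the main obstacle — is extracting the \emph{fourth} power, not just the square, and this is where $q \equiv 1 \pmod 8$ must be used in a more refined way than "$2$ is a square." I would use the identity $u^4 - qv^4 = d(uz)^2$ and reduce modulo $q$ to get $d(uz)^2 \equiv u^4$; but I also have a second relation to exploit. The trick: consider the equation modulo $q$ more carefully using that $u^4 - qv^4 = d(uz)^2$ means near a root of $T^4 - q$, but $q \equiv 0$; instead I believe one writes $d (uz)^2 + q v^4 = u^4$ and factors in $\Z[i]$ or uses that $q \equiv 1 \pmod 8$ to know $q = \pi\bar\pi$ splits in $\Z[i]$, reducing mod $\pi$: then $q \equiv 0 \pmod \pi$ gives $d(uz)^2 \equiv u^4 \pmod \pi$, and since $\F_{\pi} = \F_q$ has $q \equiv 1 \pmod 8$ so that $-1$ and $2$ are fourth-power-friendly, one can take square roots twice. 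Honestly, the cleanest expected argument: from $d(uz)^2 \equiv u^4 \pmod q$, we get $d \equiv (u^2/(uz))^2$, a square, say $d \equiv e^2$; then \emph{also} use primitivity to show $uz$ is itself a square times a factor forcing $u^2/(uz) = u/z$ to be a square mod $q$. To get $u/z$ a square: note $u \equiv \pm v^2 w^{-1}\cdot$? From $uw = v^2$: $u/z \cdot w/z = v^2/z^2 = (v/z)^2$, and one shows $w/z$ is a square mod $q$ using the first equation mod $q$ again ($u^2 \equiv d z^2$, and separately... ). I will structure the proof as: (1) reduce to primitive solution with $q \nmid u$; (2) derive $u^4 - qv^4 = d(uz)^2$; (3) show $q\nmid uz$, so mod $q$, $d \equiv (u^2/uz)^2 = (u/z)^2$; (4) show $q \mid$ nothing forces $u \equiv$ (square) $\cdot z \pmod q$ by a parity/valuation argument on $uw = v^2$ combined with $q\equiv 1 \pmod 8$; (5) conclude $d$ is a fourth power. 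I expect step (4) — pinning down that $u/z$, not merely $(u/z)^2 = d$, has even "order" in the cyclic group $\F_q^\times$ — to be the crux, and it is surely where the hypotheses $d$ square-free and $q \equiv 1 \pmod 8$ both get spent; I would likely import the equivalence with $X^4 - qY^4 = dZ^2$ from Section~\ref{s6} and read off the fourth-power conclusion directly from $u^4 - q v^4 = d(uz)^2 \equiv u^4 \pmod q$ by dividing: $d \equiv u^4/(uz)^2$, and if additionally $\gcd(u,z)$ can be controlled so that $uz \equiv u^2 \cdot(z/u)$ with $z/u$ a square, then $d \equiv u^2/(z/u)\cdots$ — I'll work the exact bookkeeping in the write-up, but the architecture above is the plan.
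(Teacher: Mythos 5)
You have a genuine gap, and it sits exactly where you yourself flagged it: your ``step (4).'' Your reductions are fine as far as they go --- passing to a primitive solution, deriving $u^4 - qv^4 = d(uz)^2$, and reducing modulo $q$ only yields $d \equiv (u/z)^2 \pmod q$, i.e.\ that $d$ is a \emph{square} mod $q$, which is strictly weaker than the lemma. The entire content of the statement is the upgrade to a fourth power, and for that you offer only speculation (working in $\Z[i]$, splitting of $q$, a ``parity/valuation argument''), none of which is carried out and none of which is the mechanism that actually works. What is missing is the following pair of ideas from the paper's proof. First, primitivity together with $d$ square-free forces $u,w,z$ to be pairwise coprime, and then $uw=v^2$ with $\gcd(u,w)=1$ makes $u^2$ a genuine fourth power in $\Z$ (not just mod $q$). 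Second --- and this is the crux you never supply --- one shows $z$ itself is a square mod $q$ by looking at the \emph{prime factorization of $z$}: for each odd prime $p\mid z$, reducing $u^2 - qw^2 = dz^2$ modulo $p$ gives $u^2\equiv qw^2$, and since $p\nmid w$ this says $\bigl(\frac{q}{p}\bigr)=1$; quadratic reciprocity (using $q\equiv 1 \pmod 4$) flips this to $\bigl(\frac{p}{q}\bigr)=1$; and since $q\equiv 1\pmod 8$ also gives $\bigl(\frac{2}{q}\bigr)=\bigl(\frac{-1}{q}\bigr)=1$, multiplicativity of the Legendre symbol over the factorization of $z$ yields $\bigl(\frac{z}{q}\bigr)=1$. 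Hence $z^2$ is a nonzero fourth power mod $q$, and $u^2\equiv dz^2 \pmod q$ with $u^2$ a fourth power forces $d$ to be a fourth power mod $q$.

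Note also how the hypotheses are actually spent, since your proposal misallocates them: $d$ square-free is used only to get the pairwise coprimality of $u,w,z$ (so that $u^2$ is a fourth power and $q\nmid z$), while $q\equiv 1\pmod 8$ is used not as ``$2$ is a square mod $q$'' in some structural factorization sense, but precisely to handle the prime $2$ and the sign in the factorization of $z$ when multiplying Legendre symbols, with reciprocity handling the odd prime factors. Without reducing modulo the primes dividing $z$ and invoking reciprocity, no amount of manipulation of the single congruence $u^4\equiv d(uz)^2 \pmod q$ can distinguish ``square'' from ``fourth power,'' so your outline cannot be completed along the lines you sketch.
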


\begin{proof}
Since the system is homogeneous, it has a primitive solution $(u, v, w, z)$. 
Observe that
$u$ and $w$ must be relatively prime since $d$ is square-free: if $p\mid u$ 
and $p\mid w$ then $p\mid v$ and $p^2 \mid d z^2$, so $p^2 \mid d$, a contradiction.
A similar argument shows that $u$ and $z$ are relatively prime, and $w$ and $z$ are relatively prime.
Also, since $u^2 w^2 =v^4$ and $u, w$ are relatively prime,  $u^2$ and $w^2$ must be
fourth powers.

Let $p$ be an odd prime dividing $z$. 
Modulo $p$ we have $u^2 \equiv q w^2$. Let $w^{-1}$ be an inverse
to $w$ modulo~$p$. So $q \equiv \left( u w^{-1} \right)^2$ modulo $p$.
In terms of the Legendre symbol: $(\frac q p)  = 1$.
Since $q\equiv 1$ modulo $4$, quadratic reciprocity tells us that
$(\frac p q)=1$.

Thus $(\frac p q)=1$ for all odd $p\mid z$. Since $q\equiv 1$ modulo $8$ 
we also have $(\frac 2 q)=1$ and  $(\frac {-1} q)=1$. 
By the multiplicativity of the Legendre symbol it follows that 
$(\frac z q)=1$. Thus $z^2$ is a nonzero fourth power modulo $q$.

We have that $u^2 \equiv d z^2$ modulo $q$. We know that $u^2$ and $z^2$
are fourth powers modulo~$q$. It follows that $d$ is a fourth power 
modulo $q$.
\end{proof}

Now consider the system of homogeneous Diophantine equations
\begin{equation}\label{system4}
U^2 - q  W^2 = d Z^2, \qquad UW= V^2
\end{equation}
where
\begin{enumerate}
\item $q$ is a prime such that $q \equiv 1$ modulo $16$,
\item $d$ is nonzero, square-free, and not divisible by $q$,
\item $d$ is a square, but not a fourth power,  modulo~$q$, and
\item $q$ is a fourth power modulo~$p$ for every odd $p$ dividing $d$.
\end{enumerate}

Proposition~\ref{locallysolvable} and
Lemma~\ref{redZR} together gives us our main result.

\begin{thm}\label{system4thm}
The system~\emph{(\ref{system4})} is locally solvable but not globally solvable: it has no nontrivial
$\Z$-solutions. 
\end{thm}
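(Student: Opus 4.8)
The plan is to combine the two results that have just been established. Theorem~\ref{system4thm} asserts two things about the system~(\ref{system4}): that it is locally solvable, and that it has no nontrivial $\Z$-solutions. Both parts follow almost immediately from the hypotheses (1)--(4) imposed on $q$ and $d$, once one checks that these hypotheses feed correctly into Proposition~\ref{locallysolvable} and Lemma~\ref{redZR}.

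First I would verify local solvability by checking that conditions (1)--(4) above imply conditions (1)--(4) of Proposition~\ref{locallysolvable}. Since $q$ is a prime not dividing the nonzero integer $d$, and $q$ is positive, condition (1) of the Proposition holds; condition (2) is immediate since $q\equiv 1$ modulo $16$; condition (3) of the Proposition asks that $d$ be a square modulo every prime $p\mid q$, but the only such prime is $q$ itself, and hypothesis (3) above says $d$ is a square modulo $q$; condition (4) of the Proposition is verbatim our hypothesis (4). Hence Proposition~\ref{locallysolvable} applies and the system is locally solvable.

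Next I would rule out global solutions by appealing to Lemma~\ref{redZR}. Its hypotheses are met: $d$ is nonzero and square-free by (2), and $q\equiv 1$ modulo $16$ implies in particular $q\equiv 1$ modulo $8$, and $q$ is a prime not dividing $d$. So if the system~(\ref{system4}) had a nontrivial $\Z$-solution, Lemma~\ref{redZR} would force $d$ to be a fourth power modulo $q$. But hypothesis (3) above says precisely that $d$ is \emph{not} a fourth power modulo $q$. This contradiction shows there is no nontrivial $\Z$-solution.

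There is no real obstacle here; the only thing to be careful about is the bookkeeping of which numbered conditions correspond to which, and the observation that the divisibility/squarefree hypotheses are slightly stronger here (prime $q$, $q\equiv 1 \bmod 16$) than the minimal hypotheses needed by each of the two ingredients, so both apply without strain. One might also remark, for completeness, that such $q$ and $d$ actually exist — for instance one can take $q=17$ together with a suitable $d$, recovering the Lind--Reichardt example — but the theorem as stated is purely conditional on (1)--(4) and needs nothing beyond the two cited results.
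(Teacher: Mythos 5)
Your proof is correct and follows exactly the route the paper intends: the paper itself derives Theorem~\ref{system4thm} by combining Proposition~\ref{locallysolvable} and Lemma~\ref{redZR}, and your verification that hypotheses (1)--(4) of the system match the hypotheses of those two results is exactly the bookkeeping the paper leaves implicit.
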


We end with a few specific examples of~(\ref{system4}).

\begin{example}
Lind and Reichardt's example, the first known counterexample
to the Hasse principle, is the
following special case of Theorem~\ref{system4thm}:
$$
U^2 - 17  W^2 = 2 Z^2, \qquad UW= V^2.
$$
This is a counterexample since $2$ is a square, but not a fourth power,
modulo~$17$.
\end{example}

\begin{example}
More generally, let $q$ be a prime such that $q \equiv 1$ modulo $16$
and such that $2$ is not a fourth power modulo~$q$.
In this case  $( \frac{2}{q} ) = 1$, so
$$ U^2 - q  W^2 = 2 Z^2, \qquad UW= V^2 $$
gives a counterexample to the Hasse principle.\footnote{Actually,
we need only assume $q \equiv 1$ modulo $8$
since $(1, 1, 1, 2)$ is a strong solution modulo~$16$ if $q\equiv 9$ modulo $16$.
Also we note that
this family of examples is infinite.
In fact, the density
of primes $q$ with $q\equiv 1$ modulo $8$ such that $2$
is not a fourth power modulo~$q$ is $1/8$.
This can be seen by applying the Chebotarev density theorem to
the extension $\Q(\sqrt[4]{2}, i)/\Q$, whose Galois group
is the dihedral group of order~$8$.}
\end{example}

\begin{example}
For an example where $d\ne 2$, consider
$$
U^2 - 17  W^2 = 19 Z^2, \qquad UW= V^2.
$$
\end{example}


\section{Further Issues}\label{s6}

We conclude by addressing two issues raised earlier. The first concerns 
the extension of Corollary~\ref{characterizationeasycase} to the case where $p^2 \mid abc$.
The second concerns the relationship between the systems studied in this
paper and the Diophantine equation $a X^4 + b X^2 Y^2 + c Y^4 = d Z^2$ studied by others.

\subsection*{Local Solvability}
Corollary~\ref{characterizationeasycase} gives necessary and sufficient 
conditions for the \hbox{$p$-local} solvability of
the system \hbox{$a U^2 + c  W^2 = d Z^2$}, $\;UW= V^2$
when $p^2 \nmid abc$. These conditions give a computationally effective
procedure for deciding $p$-local solvability.
We will now discuss an effective procedure for deciding $p$-local 
solvability even when $p^2 \mid abc$. With Corollary~\ref{plocalcor} 
this implies that local solvability as a whole is effectively decidable 
(deciding $\R$-solvability is easy).

We begin with some terminology. Given nonzero integers $a, c, d$ we 
refer to \hbox{$a U^2 + c  W^2 = d Z^2$}, $\;UW= V^2$
as the \emph{system~$(a, c, d)$}. We write 
$(a,c,d) \sim (a',c',d')$ if the two systems are both 
$p$-locally solvable or neither is. 

\begin{lemma} \label{fs}
Let $a, c, d\in\Z$ be nonzero integers and let $p$ be a prime.
Then 
\begin{align*}
(a,   c,  d) & \sim (c,   a,  d) \sim (pa, pc, pd) \sim 
\bigl( a p^2, c p^2, d     \bigr) \\
 & \sim \bigl( a,     c,     d p^2 \bigr) \sim
   \bigl( a p^4, c,     d     \bigr) \sim \bigl( a,     c p^4, d     \bigr).
\end{align*}
\end{lemma}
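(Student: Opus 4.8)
The plan is to establish each equivalence by exhibiting, in both directions, a way to convert a primitive (indeed strong) solution modulo $p^k$ for one system into a primitive solution modulo $p^k$ (or $p^{k-c}$ for suitable $c$, which suffices since $p$-local solvability quantifies over all $k$) for the other. The symmetry $(a,c,d)\sim(c,a,d)$ is trivial: swapping $U$ and $W$ preserves both $aU^2+cW^2=dZ^2$ and $UW=V^2$. For $(a,c,d)\sim(pa,pc,pd)$, note the two systems have literally the same solution sets, since $p(aU^2+cW^2)=p(dZ^2)$ is equivalent to $aU^2+cW^2=dZ^2$ as a congruence mod $p^k$ after multiplying/dividing — more carefully, a primitive solution mod $p^k$ of one gives a solution mod $p^k$ of the other that is still primitive, because the second equation $UW=V^2$ is untouched and forces primitivity to be detected in $(U,V,W,Z)$ jointly; I would just remark the first equations are scalar multiples.

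For the scaling relations in the coefficients, the key device is the substitution trick already used in Theorem~\ref{strongliftingtheorem}-style arguments: multiply a variable by a power of $p$ to absorb a $p^2$ or $p^4$ factor. Concretely, $(ap^2, cp^2, d)\sim(a,c,d)$: given a solution $(u,v,w,z)$ of $aU^2+cW^2=dZ^2,\ UW=V^2$ modulo $p^{k}$, the tuple $(pu, pv, pw, z)$ — wait, one must check $UW=V^2$ scales correctly — $(pu)(pw)=p^2uw=p^2v^2=(pv)^2$, good, and $a(pu)^2 p^{?}\dots$; I would instead observe $ap^2\cdot u^2 + cp^2\cdot w^2 = p^2 dz^2$ does not match $dZ^2$, so the right substitution is to solve $ap^2U^2+cp^2W^2=dZ^2$ by taking $Z=pz$ and reading off $aU^2+cW^2=dz^2$; conversely one needs $p\mid z$ in any solution of the $(ap^2,cp^2,d)$ system (forced mod $p$ since $dz^2\equiv 0$ and $p\nmid d$ in the relevant reduced case — but in general $p$ may divide $d$, so here one genuinely needs to track how many $p$'s divide each coefficient and may lose a bounded power of $p$ in $k$, which is harmless). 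Similarly $(a,c,dp^2)\sim(a,c,d)$ uses $Z\mapsto Z/p$ or $Z\mapsto pZ$; and $(ap^4,c,d)\sim(a,c,d)$, $(a,cp^4,d)\sim(a,c,d)$ use the fourth-power substitution $U=u^2,\ V=uv$ (so that $UW=V^2$ becomes $u^2W=(uv)^2=u^2v^2$, i.e. $W=v^2$, and $ap^4U^2=ap^4u^4=(a)(p^2u^2)^2$ absorbs into $a\tilde U^2$ with $\tilde U=p^2u^2$) — this is exactly the structure exploited in the proof of Theorem~\ref{strongliftingtheorem}.

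The honest way to write this is to handle one representative nontrivial case, say $(ap^2,cp^2,d)\sim(a,c,d)$ and $(ap^4,c,d)\sim(a,c,d)$, in detail and note the others follow by the same method plus the already-proven symmetry $(a,c,d)\sim(c,a,d)$ (which converts the $cp^4$ case to the $ap^4$ case, and lets us relabel in the $p^2$ cases). For each I would: (1) take a primitive solution mod $p^N$ for large $N$; (2) apply the substitution to produce a solution of the other system mod $p^{N'}$ with $N'\ge N - O(1)$; (3) check the image is still primitive, or can be made primitive by dividing out a common $p$-power (using the footnote observation that a nontrivial tuple with gcd prime to $p$ is as good as a primitive one); (4) since $N$ was arbitrary, conclude $p$-local solvability transfers. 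The main obstacle is bookkeeping: keeping straight which substitution goes in which direction, verifying $UW=V^2$ survives each substitution (it does, but one must check), and making sure the unavoidable loss of a bounded number of powers of $p$ in the modulus is genuinely harmless — which it is, precisely because $p$-local solvability is the statement "for all $k$", so a cofinal family of $k$ suffices. I do not expect any deep difficulty, only careful casework; I would organize it as a short table of the six substitutions with a one-line verification of each.
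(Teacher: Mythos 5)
Your proposal takes essentially the same route as the paper's proof: for each pair of systems you scale the variables by suitable powers of $p$ to carry a primitive solution modulo $p^k$ to a solution of the other system with a bounded shift in the exponent, restore primitivity by dividing out the common power of $p$, and conclude because $p$-local solvability quantifies over all $k$ (the paper writes out only the implication from $(a,c,d)$ to $(ap^4,c,d)$ via $(u,v,w,z)\mapsto(u,\,pv,\,p^2w,\,p^2z)$ and declares the remaining implications similar). Your rough spots --- the claim that $(a,c,d)$ and $(pa,pc,pd)$ have ``literally the same'' solution sets modulo $p^k$, the $Z\mapsto Z/p$ gesture, and the worry when $p\mid d$ --- are precisely the bounded-loss bookkeeping you already flag as harmless, so the plan is sound and matches the paper.
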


\begin{proof}
We show that the $p$-local sovability of $(a, c, d)$ implies that of 
$\bigl( a p^4, c, d \bigr)$. The proofs of the other implications are similar.

Suppose $(u, v, w, z)$ is a primitive solution
modulo~$p^{k}$ to the system $(a, c, d)$.
Then  $( u, p v, p^2 w,  p^2 z)$ is a solution modulo $p^{k+2}$ to
the system $\bigl( a p^4, c, d \bigr)$. If this solution is not primitive,  
divide each coordinate 
by the largest common power of~$p$ 
(either $p$ or $p^2$) to obtain a primitive solution modulo $p^{k-2}$ 
to the system~$\bigl( a p^4, c, d \bigr)$.
\end{proof}

We can use Lemma~\ref{fs} to reduce any given system to one where
 $p \nmid a$, $p^4 \nmid c$, and  $p^2 \nmid d$.
 To this end, repeatedly apply $(pa,pc,d) \sim (pa,pc,p^2d) \sim (a,c,pd)$
 until we get a system with
$p\nmid a$ or $p \nmid c$. Since $(a, c, d) \sim (c, a, d)$ we can
 assume $p\nmid a$. Since
$(a,p^4c,d) \sim (a,c,d)$ and $(a,c,p^2d) \sim (a,c,d)$ 
we can assume $p^4 \nmid c$ and 
$p^2 \nmid d$.

We can go further.

\begin{lemma}
Let $(a,c,d)$ be a system with $p \nmid a$, $p^4 \nmid c$ and 
$p^2 \nmid d$. If $c = p^2 c_0$, and $d = p d_0$ where $p \nmid c_0d_0$,
then the system is not $p$ locally-solvable. In all other cases, $(a,b,c)$ is
equivalent to a system in which at most one coefficient is divisible by~$p$.
\end{lemma}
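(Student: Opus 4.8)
The plan is a case analysis on the exponents $v_p(c)\in\{0,1,2,3\}$ and $v_p(d)\in\{0,1\}$ with which $p$ divides $c$ and $d$; recall that $v_p(a)=0$ by hypothesis. If $v_p(d)=0$, then only $c$ among the three coefficients can be divisible by $p$, and if $v_p(d)=1$ while $v_p(c)=0$, then only $d$ is; in both situations the conclusion already holds, with no transformation. So the real work lies in the three subcases $v_p(d)=1$ with $v_p(c)\in\{1,2,3\}$.

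In the subcases $v_p(c)=1$ and $v_p(c)=3$ I would simply chain together the equivalences of Lemma~\ref{fs}. When $v_p(c)=1$, write $c=pc_0$, $d=pd_0$ with $p\nmid ac_0d_0$; then $(a,pc_0,pd_0)\sim(ap^4,pc_0,pd_0)\sim(ap^3,c_0,d_0)$, the first step multiplying the first coordinate by $p^4$ and the second dividing all three coordinates by $p$. When $v_p(c)=3$, write $c=p^3c_0$; then $(a,p^3c_0,pd_0)\sim(ap^2,p^5c_0,pd_0)\sim(ap^2,pc_0,pd_0)\sim(ap,c_0,d_0)$, using successively $(a,c,d)\sim(ap^2,cp^2,d)$, then $(a,cp^4,d)\sim(a,c,d)$, then $(pa,pc,pd)\sim(a,c,d)$. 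In each case the final system has exactly one coefficient divisible by $p$. The only thing to verify here is that every individual step really is one of the six moves recorded in Lemma~\ref{fs}.

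The remaining subcase $v_p(c)=2$, $v_p(d)=1$ — so $c=p^2c_0$, $d=pd_0$ with $p\nmid ac_0d_0$ — is the one I expect to be the main obstacle, since here the claim is that the system $(a,p^2c_0,pd_0)$ is \emph{not} $p$-locally solvable; I would prove this by showing it admits no primitive solution modulo $p^3$. Suppose $(u,v,w,z)$ were such a solution. Reducing $au^2+p^2c_0w^2\equiv pd_0z^2$ modulo $p$ and using $p\nmid a$ forces $p\mid u$; substituting $u=pu_1$ and cancelling a factor $p$ gives $p(au_1^2+c_0w^2)\equiv d_0z^2\pmod{p^2}$, whence $p\mid z$ because $p\nmid d_0$; substituting $z=pz_1$ and cancelling once more gives $au_1^2+c_0w^2\equiv 0\pmod p$. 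Meanwhile $uw\equiv v^2\pmod{p^3}$ forces $p\mid v$, and then $u_1w\equiv 0\pmod p$; since $p$ now divides each of $u,v,z$, primitivity forces $p\nmid w$, hence $p\mid u_1$, and the congruence $au_1^2+c_0w^2\equiv 0\pmod p$ collapses to $c_0w^2\equiv 0\pmod p$, contradicting $p\nmid c_0w$. This descent never uses that $p$ is odd, so it also covers $p=2$; note too that working modulo $p^2$ instead would not suffice, since $(0,p,1,p)$ is a primitive solution there. Assembling the subcases proves the lemma; the one delicate point is bookkeeping — keeping track of exactly how many factors of $p$ survive each cancellation.
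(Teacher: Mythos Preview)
Your proof is correct and follows essentially the same approach as the paper. The non-solvability argument for the case $c=p^2c_0$, $d=pd_0$ is the same descent modulo $p^3$ (the paper phrases it via direct divisibility rather than substitution-and-cancellation, but the logic is identical), and your reduction chains for $v_p(c)\in\{1,3\}$ differ from the paper's only in the order in which the moves of Lemma~\ref{fs} are applied; both land on systems with exactly one coefficient divisible by~$p$.
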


\begin{proof}
Suppose a system
$(a, p^2c_0, pd_0)$ with
$p\nmid a c_0 d_0$
 has a primitive solution $(u, v, w, z)$ modulo $p^3$.
From $a u^2 + p^2 c_0 w^2 \equiv p d_0 z^2$ modulo~$p^3$, 
it follows that $p \mid u$, which in turn implies $p\mid z$.
From $p\mid u$ and $u w \equiv v^2$ modulo $p^3$, we get 
$p \mid v$, so $p\nmid w$ (the solution is primitive)
and $p^2 \mid u$. Hence $p^3 \mid p^2 c_0 w^2$, a contradiction.

Finally,
$(a,p^3c,pd) \sim (p^4a,p^3c,pd) \sim (p^2a,p c, p d)$ 
$\sim (pa,c,d) \sim (c,pa,d)$, and
$(a,pc,pd) \sim (ap^4,pc,pd) \sim (a p^3, c, d) \sim (c,p^3a,d)$. 
\end{proof}

The following lemmas give effective tests for
$p$-local solvability for the remaining reduced systems.

\begin{lemma}
The systems $(a,c,d)$, $(a,cp,d)$, $(a,c,dp)$, and $(a,cp^3,d)$ 
with $p \nmid acd$  are $p$-locally solvable if and only if they have 
strong solutions modulo $p$ if $p$ is odd, or modulo $16$ if $p=2$.
\end{lemma}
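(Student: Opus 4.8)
The plan is to treat the four listed systems together, since each has $p \nmid acd$ (using that $p \nmid p^j$ for the constants $c, cp, dp, cp^3$ is vacuous — the hypothesis $p\nmid acd$ is about the \emph{reduced} coefficients). One direction is immediate: if a system is $p$-locally solvable then in particular it has a primitive solution modulo $p^4$ (or modulo $16$ when $p=2$); since the relevant product of coefficients is not divisible by $p^2$ when $p \nmid acd$ — wait, here the coefficients are $a, cp, d$ etc., so $p \mid acd$-product in three of the four cases. So Lemma~\ref{lemma.primitive.strong} does not directly apply. Instead I would argue directly that a primitive solution modulo $p^4$ must be a strong solution, by the same kind of valuation bookkeeping used in the proofs of Lemma~\ref{lemma.primitive.strong} and the two reduction lemmas above: in each of the four cases the $p$-adic valuations of the three coefficients are too small to allow all of $au, cw, dz$ (with the appropriate coefficient) to vanish modulo $p$ without forcing the solution to be non-primitive. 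Then a strong solution modulo $p^4$ restricts to a strong solution modulo $p$ (when $p$ is odd) or is already modulo $16$ (when $p=2$).

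For the converse — a strong solution modulo $p$ (resp. modulo $16$) lifts to $p$-local solvability — the cleanest route is to invoke the equivalences of Lemma~\ref{fs} to reduce each of the three "twisted" systems to a system with $p \nmid acd$, namely $(a,c,d)$ itself, via $(a,cp,d)$: note $(a, cp, d)$... hmm, Lemma~\ref{fs} gives $(a, cp^4, d) \sim (a, c, d)$ but not $(a, cp, d) \sim (a,c,d)$. So this reduction is not available for $cp$ or $cp^3$ or $dp$. Therefore the honest approach is to mimic the proof of Theorem~\ref{strongliftingtheorem} directly in each case. Given a strong solution, by symmetry and the structure $aU^2 + cW^2 = dZ^2$, $UW = V^2$, one normalizes so that the coefficient-times-variable that is a unit becomes $1$, uses $UW = V^2$ to eliminate $W$, and is left with an equation of the shape (unit) $= $ (unit) $\cdot$ (fourth power) $+ \cdots$ modulo $p$; then Proposition~\ref{henselpower} (for $p$ odd) or Proposition~\ref{fourthpowermod2} (for $p=2$, starting from modulo $16$) lifts the relevant unit to a fourth power modulo $p^k$, yielding a strong — hence primitive — solution modulo $p^k$.

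So the key steps, in order, are: (1) for each of the four systems check that $p \nmid (\text{each nonzero coefficient's prime-to-}p\text{ part})$ is consistent with the hypothesis and identify which variable carries a unit coefficient in any primitive solution modulo $p^4$; (2) prove primitive-mod-$p^4$ implies strong-mod-$p^4$ by a short valuation argument case by case, then descend to mod $p$ (or stay at mod $16$); (3) for the converse, rerun the normalization-and-lifting argument of Theorem~\ref{strongliftingtheorem}/Theorem~\ref{strongliftingtheorem2} in each of the four cases, applying Proposition~\ref{henselpower} or Proposition~\ref{fourthpowermod2} as appropriate.

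The main obstacle will be step (2): making sure that in each of the twisted cases a primitive solution modulo $p^4$ genuinely cannot fail to be strong. The danger is a case like $(a, cp^3, d)$ where $p \mid cW^2$ automatically, so strongness must come from $aU^2$ or $dZ^2$ being a unit; one must check that if both $aU^2$ and $dZ^2$ vanished mod $p$ then, using $aU^2 + cp^3 W^2 \equiv dZ^2$ modulo $p^4$ together with $UW \equiv V^2$, primitivity is violated — the exponent $p^4$ (and, for $p=2$, the modulus $16 = 2^4$) is exactly large enough to run this argument, but it is tight, and one should double-check the $p=2$ instances by hand since the reduction steps there behave slightly differently (this is why the modulus $16$ rather than $2$ appears in the statement).
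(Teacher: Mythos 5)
Your overall strategy---a valuation argument at modulus $p^4$ (respectively $16$) for the forward direction and fourth-power lifting for the converse---is the same as the paper's, but you make the job harder than necessary in two places because of misread hypotheses. First, Lemma~\ref{lemma.primitive.strong} and Corollary~\ref{characterizationeasycase} require only $p^2\nmid acd$ for the product of the \emph{actual} coefficients of the system, not $p\nmid acd$; for $(a,cp,d)$ and $(a,c,dp)$ with $p\nmid acd$ the coefficient product is divisible by $p$ exactly once, so the corollary applies verbatim and disposes of the first three systems at a stroke---this is exactly what the paper does, leaving only $(a,cp^3,d)$ to be treated by hand. Second, for the converse you do not need to rerun the proofs of Theorems~\ref{strongliftingtheorem} and~\ref{strongliftingtheorem2}: both are stated for arbitrary nonzero integers $a,c,d$ with no coprimality-to-$p$ assumption, so a strong solution modulo $p$ (or modulo $16$) of any of the four systems, twisted coefficients and all, lifts directly; no reduction via Lemma~\ref{fs} and no re-derivation is needed.

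The one step you leave as a sketch is in fact the heart of the lemma, so let me confirm it closes: for $(a,cp^3,d)$, if $(u,v,w,z)$ is a primitive solution modulo $p^4$ that is not strong, then $p\mid u$ and $p\mid z$ (the middle term $cp^3w$ vanishes modulo $p$ automatically), hence $p\mid v$ from $uw\equiv v^2$, hence $p\nmid w$ by primitivity, hence $p^2\mid u$, hence $dz^2\equiv cp^3w^2\equiv 0$ modulo $p^3$, hence $p^2\mid z$, and then $cp^3w^2\equiv 0$ modulo $p^4$ forces $p\mid cw^2$, a contradiction. So your instinct that the exponent $4$ (and the modulus $16$ when $p=2$) is exactly what is needed is correct, and with the two simplifications above your plan collapses to the paper's proof.
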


\begin{proof}
This follows from Corollary~\ref{characterizationeasycase} if the system 
is one of the first three. So we consider the system
$(a, c p^3, d)$ where $a, c, d$ are prime to~$p$.

First suppose $p$-locally solvability, and let $(u, v, w, z)$
be a primitive solution modulo~$p^4$. If $(u, v, w, z)$
is not strong, then $p\mid u$ and $p\mid z$.
This forces $p\nmid w$ since $(u, v, w, z)$
is primitive. Since $p\mid v$, we have $p^2\mid u$.
So, $d z^2\equiv 0$ modulo~$p^3$. Thus
$p^2\mid z$. Modulo $p^4$ we now have $c p^3 w^2\equiv 0$, contradicting
the fact that $c$ and $w$ are prime to $p$.  Thus $(u, v, w, z)$ must
be a strong solution modulo $p^4$.

So if the system is $p$-locally solvable it possesses a strong solution modulo $p^4$, and
hence modulo $p$. The converse follows from Theorems~\ref{strongliftingtheorem}
and~\ref{strongliftingtheorem2}.
\end{proof}

\begin{lemma}
The system $(a, c p^2, d)$ with $p \nmid acd$ is $p$-locally solvable 
if and only if 
\begin{enumerate}
\item[(i)] it has a strong solution modulo~$m$, or 
\item[(ii)] the system $( a p^2, c, d )$ has a strong 
           solution modulo $m$. 
\end{enumerate}
Here $m=p$ if  $p$ is odd, but $m=16$ if $p=2$.
\end{lemma}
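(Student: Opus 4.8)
The plan is to derive both directions from the lifting theorems (Theorems~\ref{strongliftingtheorem} and~\ref{strongliftingtheorem2}), the equivalences recorded in Lemma~\ref{fs}, and a direct inspection of a primitive solution modulo a high power of~$p$, in the style of the preceding lemmas in this section. For the ``if'' direction: both lifting theorems permit arbitrary nonzero coefficients, so if (i) holds I would apply Theorem~\ref{strongliftingtheorem} (for odd~$p$) or Theorem~\ref{strongliftingtheorem2} (for $p=2$) to the system $(a,cp^2,d)$ itself to get strong solutions modulo $p^k$ for every~$k$, hence $p$-local solvability. If (ii) holds, the same theorems applied to $(ap^2,c,d)$ make that system $p$-locally solvable, and then Lemma~\ref{fs} transfers this to $(a,cp^2,d)$ via $(ap^2,c,d)\sim(ap^4,cp^2,d)\sim(a,cp^2,d)$ (the first equivalence being an instance of $(\alpha,\beta,d)\sim(\alpha p^2,\beta p^2,d)$, the second of $(\alpha p^4,\beta,d)\sim(\alpha,\beta,d)$).

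For the ``only if'' direction, assume $p$-local solvability and fix a primitive solution $(u,v,w,z)$ of $(a,cp^2,d)$ modulo $p^k$ for a conveniently large~$k$, say $k=6$. I would split on whether $p\mid u$. If $p\nmid u$ then $au\not\equiv 0\pmod p$, so reducing modulo~$m$ already gives a strong solution modulo~$m$ and (i) holds. If $p\mid u$, a short divisibility chase does it: primitivity forces $p\nmid w$ (otherwise $p$ would divide all of $u,v,w,z$); then $uw\equiv v^2$ gives $p\mid v$ and hence $p^2\mid u$; feeding $p^2\mid u$ into $au^2+cp^2w^2\equiv dz^2$ gives $p\mid z$; finally, writing $u=p^2u'$, $v=pv'$, $z=pz'$ and dividing both congruences by~$p^2$ produces a solution $(u',v',w,z')$ of the system $(ap^2,c,d)$ modulo $p^{k-2}=p^4$, which is strong since $cw\not\equiv 0\pmod p$. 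Reducing modulo~$m$ then yields (ii). Note that the dichotomy $p\mid u$ versus $p\nmid u$ is exhaustive, so one of (i), (ii) always results.

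I expect the exponent bookkeeping in the second case to be the only delicate part: one must start high enough (taking $k=6$ leaves $p^4$ after the two divisions by $p^2$, which covers both $m=p$ and $m=16$), and check that each reduction step preserves primitivity and strength — which it does, because $p\nmid u$ in the first case and $p\nmid w$ in the second are visibly preserved under reduction. Everything else is routine and parallels the computations already made earlier in this section.
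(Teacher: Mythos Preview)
Your proof is correct and follows essentially the same route as the paper's: the same lifting theorems for the ``if'' direction (with an equivalent, slightly different chain through Lemma~\ref{fs}), and the same analysis of a primitive solution modulo~$p^6$ with the same divisibility chase for the ``only if'' direction. The only cosmetic differences are the order in which you deduce $p\mid z$ versus $p^2\mid u$, and which pair of equivalences from Lemma~\ref{fs} you invoke.
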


\begin{proof}
Suppose $p$-local solvability holds, and let $(u, v, w, z)$ be a 
primitive solution modulo $p^6$. If $p\nmid u$, the solution is 
strong. Suppose $p\mid u$. This implies that $v^2 \equiv 0$ and 
$d z^2 \equiv 0$ modulo $p$. So $u, v, z$ are zero modulo~$p$.
Since $(u, v, w, z)$ is primitive, $p\nmid w$. Since
$p\mid v$, we get $p^2 \mid u$. 
Observe that $( u/p^2, v/p, w, z/p)$ is a strong solution modulo $m$ to
the system~$(a p^2, c, d)$.

Conversely, if (i) the system 
$( a, c p^2, d)$ has a strong solution modulo $m$,
then it is $p$-locally solvable by Theorems~\ref{strongliftingtheorem}
and~\ref{strongliftingtheorem2}.
If (ii) the system $( a p^2, c, d )$
has a strong solution modulo $m$,
then the system $( a p^2, c, d )$
is $p$-locally solvable by Theorems~\ref{strongliftingtheorem}
and~\ref{strongliftingtheorem2}.
Since $(a p^2, c, d) \sim (a p^2, c p^4, d) \sim (a, c p^2, d)$ the
system $(a, c p^2, d)$ is $p$-locally solvable as claimed.
 \end{proof}

\begin{exercise}
If $p$ is an odd prime then the test for $p$-local solvability can be
made very explicit. Suppose $p \nmid acd$.
The system $(a, c, d)$ is $p$-locally solvable and the system
$(a, c p^2, dp)$ is not $p$-locally solvable.
Show that the systems $(a, c p, d)$ and $(a, c p^3, d)$ 
are $p$-locally solvable if and only if $ad$ is a square modulo $p$. 
Show that the system $(a, c, d p)$ is $p$-locally solvable if and only if
$-a c^3$ is a fourth power modulo~$p$.
Finally, show that the system $(a, c p^2, d)$ is $p$-locally solvable 
if and only if $a d$ or $c d$ is a square modulo $p$.
\end{exercise}

\subsection*{Relationship with the Quartic}
We now relate the systems studied in this
paper with the nonhomogeneous quartic equation
\begin{equation}\label{singleequation}
a X^4 + b X^2 Y^2 + c Y^4 = d Z^2.
\end{equation}
The main body of this paper treats the $b=0$ case of the system
\begin{equation}\label{system2}
a U^2 + b V^2 + c W^2 = d Z^2, \qquad UW= V^2,
\end{equation}
and the appendices  consider the general case which is important in the study of elliptic curves.
We now show that the solvability of~(\ref{singleequation}) and 
the solvability of~(\ref{system2}) are equivalent
in many situations. In particular, $\Z$-solvability, $\R$-solvability, and $\F_p$-solvability
are covered by the following:

\begin{lemma}
Let $a, b, c, d\in R$ where $R$ is an integral domain. Then
the system~\emph{(\ref{system2})} has a nontrivial $R$-solution
if and only if~\emph{(\ref{singleequation})}
has a nontrivial $R$-solution.
\end{lemma}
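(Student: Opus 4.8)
The plan is to exhibit explicit substitutions turning a solution of one equation into a solution of the other, using the hypothesis that $R$ is an integral domain only to keep track of nontriviality.

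First I would treat the ``if'' direction. Given a nontrivial $R$-solution $(x,y,z)$ of (\ref{singleequation}), set $U = x^2$, $V = xy$, $W = y^2$, $Z = z$. Then $UW = (xy)^2 = V^2$ and $aU^2 + bV^2 + cW^2 = ax^4 + bx^2y^2 + cy^4 = dz^2 = dZ^2$, so (\ref{system2}) is satisfied. To see the solution is nontrivial: if $z \ne 0$ then $Z \ne 0$, while if $z = 0$ then some $x$ or $y$ is nonzero and, $R$ having no zero divisors, the corresponding square is nonzero.

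Next the ``only if'' direction. Given a nontrivial $R$-solution $(u,v,w,z)$ of (\ref{system2}), the key identity is obtained by squaring $uw = v^2$, namely $v^4 = u^2 w^2$. If $u \ne 0$ I take $(X,Y,Z) = (u, v, uz)$ and compute
\[
aX^4 + bX^2Y^2 + cY^4 = au^4 + bu^2v^2 + cv^4 = au^4 + bu^2v^2 + cu^2w^2 = u^2\bigl(au^2 + bv^2 + cw^2\bigr) = u^2 dz^2 = d(uz)^2,
\]
which is a nontrivial solution of (\ref{singleequation}) since $X = u \ne 0$. If instead $w \ne 0$ I take $(X,Y,Z) = (v, w, wz)$ and argue symmetrically, using $av^4 = au^2w^2$ to factor out $w^2$ from $av^4 + bv^2w^2 + cw^4 = w^2(au^2+bv^2+cw^2) = d(wz)^2$; nontriviality comes from $Y = w \ne 0$. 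In the remaining case $u = w = 0$, the relation $v^2 = uw = 0$ forces $v = 0$, so the first equation collapses to $dz^2 = 0$; nontriviality of $(u,v,w,z)$ then gives $z \ne 0$, hence $d = 0$ (integral domain), and $(0,0,1)$ is a nontrivial solution of $aX^4 + bX^2Y^2 + cY^4 = 0 = dZ^2$.

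The one point requiring care — and the only real obstacle — is that in the ``only if'' direction $u$ and $w$ need not be squares in $R$, so the substitution $U = X^2$, $W = Y^2$ cannot simply be inverted; instead one multiplies the first equation through by $u^2$ (or $w^2$) and uses $v^4 = u^2w^2$. Everything else is a routine case analysis on which of $u, w$ vanish, all of it governed by the absence of zero divisors.
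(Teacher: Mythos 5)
Your proposal is correct and follows essentially the same route as the paper: the same substitution $(x^2, xy, y^2, z)$ in one direction, and the same two candidate solutions $(u, v, uz)$ and $(v, w, wz)$ obtained by multiplying through by $u^2$ or $w^2$ in the other. The only cosmetic difference is that the paper dispatches the $d=0$ case at the outset, whereas you absorb it into the case $u=w=0$.
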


\begin{proof}
If $d=0$ then both (\ref{system2}) and (\ref{singleequation}) 
have obvious solutions, so suppose $d\ne 0$.

If $(x_0, y_0, z_0)$ is a nontrivial solution to (\ref{singleequation})
then  $(x^2_0, x_0 y_0, y_0^2, z_0)$
is a nontrivial solution to~(\ref{system2}).

If $(u_0, v_0, w_0, z_0)$ is a nontrivial solution
to~(\ref{system2}), then
both  $(u_0, v_0, z_0 u_0)$ and $(v_0, w_0, z_0 w_0)$
are solutions to (\ref{singleequation}). At least one
is nontrivial since $d\ne 0$.
\end{proof}

The questions of modulo-$p^k$ solvability (for $k>1$) and $p$-local solvability
are covered by the following.
For simplicity we assume $p^2\nmid d$. 

\begin{lemma}\label{redpk}
Let $p$ be a prime and $k>1$.
Suppose that $a, b, c, d\in\Z$ are such that $p^2 \nmid d$.
Then the system  \emph{(\ref{system2})} has a
primitive solution modulo $p^k$
if and only if the equation \emph{(\ref{singleequation})}
has a primitive solution modulo $p^k$.
\end{lemma}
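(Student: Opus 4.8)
The statement is very close to the previous lemma, which handled the $R$-solution case; the only new subtlety is that "primitive solution modulo $p^k$" imposes a GCD condition that must be tracked through the two conversions. So the plan is to mimic the proof of the previous lemma, being careful about primitivity, and to use the hypothesis $p^2 \nmid d$ to force one of the two candidate solutions of (\ref{singleequation}) to be primitive. Concretely: for one direction, given a primitive solution $(x_0, y_0, z_0)$ modulo $p^k$ to (\ref{singleequation}), pass to $(x_0^2, x_0 y_0, y_0^2, z_0)$ as a solution to (\ref{system2}); for the other direction, given a primitive solution $(u_0, v_0, w_0, z_0)$ modulo $p^k$ to (\ref{system2}), consider the two triples $(u_0, v_0, z_0 u_0)$ and $(v_0, w_0, z_0 w_0)$, each a solution to (\ref{singleequation}) modulo $p^k$ (after noting $d(z_0u_0)^2 = u_0^2 \cdot dz_0^2 = u_0^2(au_0^2+bv_0^2+cw_0^2)$ and using $u_0 w_0 \equiv v_0^2$; similarly for the other).

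\textbf{Tracking primitivity.} First I would observe that if $(x_0,y_0,z_0)$ is primitive modulo $p^k$, i.e. not all three are divisible by $p$, then $(x_0^2, x_0y_0, y_0^2, z_0)$ is also primitive: if $p \nmid z_0$ we are done, and if $p \mid z_0$ then $p \nmid x_0$ or $p \nmid y_0$, whence $p$ does not divide $x_0^2$ or $y_0^2$. The reverse direction is where the real work lies. Given $(u_0,v_0,w_0,z_0)$ primitive modulo $p^k$, I want at least one of $(u_0, v_0, z_0u_0)$, $(v_0, w_0, z_0w_0)$ to be primitive modulo $p^k$. Suppose not: then $p$ divides all of $u_0, v_0, z_0u_0$ and all of $v_0, w_0, z_0w_0$. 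This gives $p \mid u_0$, $p\mid v_0$, $p\mid w_0$; since $(u_0,v_0,w_0,z_0)$ is primitive we must have $p \nmid z_0$. But then from $p\mid z_0u_0$ and $p\mid z_0 w_0$ we only re-derive $p \mid u_0, p\mid w_0$, which is consistent — so I need to use the equation. From $au_0^2 + bv_0^2 + cw_0^2 \equiv dz_0^2 \pmod{p^k}$ with $k \ge 2$ and $p \mid u_0, v_0, w_0$, the left side is $\equiv 0 \pmod{p^2}$, so $dz_0^2 \equiv 0 \pmod{p^2}$; since $p \nmid z_0$ this forces $p^2 \mid d$, contradicting the hypothesis. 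Hence one of the two triples is primitive modulo $p^k$, completing that direction.

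\textbf{The congruence identities.} The remaining routine step is verifying that the triples really satisfy (\ref{singleequation}) modulo $p^k$. For $(x_0^2, x_0y_0, y_0^2, z_0)$ this is immediate from substituting into $aX^4 + bX^2Y^2 + cY^4$. For $(u_0, v_0, z_0 u_0)$: compute $a u_0^4 + b u_0^2 v_0^2 + c v_0^4 \equiv a u_0^4 + b u_0^2 v_0^2 + c (u_0 w_0)^2 = u_0^2(au_0^2 + bv_0^2 + cw_0^2) \equiv u_0^2 \cdot dz_0^2 = d(z_0u_0)^2 \pmod{p^k}$, using $v_0^2 \equiv u_0 w_0 \pmod{p^k}$; the computation for $(v_0, w_0, z_0w_0)$ is symmetric. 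I do not expect any obstacle in these verifications.

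\textbf{Main obstacle.} The only genuinely delicate point is the primitivity bookkeeping in the reverse direction, and specifically seeing that one must invoke $p^2 \nmid d$ together with $k \ge 2$ — exactly the two standing hypotheses. Everything else is a transcription of the previous lemma's proof from "$R$-solution" to "solution modulo $p^k$." I would write the proof in two short paragraphs mirroring the structure above, flagging where each hypothesis is used.
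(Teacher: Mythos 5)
Your proposal is correct and follows essentially the same route as the paper: convert via $(x_0^2, x_0y_0, y_0^2, z_0)$ in one direction and via the two triples $(u_0, v_0, z_0u_0)$, $(v_0, w_0, z_0w_0)$ in the other, using $k\ge 2$ and $p^2\nmid d$ to show not all of $u_0, v_0, w_0$ can be divisible by $p$, so at least one triple is primitive. The extra detail you supply (the explicit congruence computation and the forward-direction primitivity check) is exactly what the paper leaves implicit.
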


\begin{proof}

If $(x_0, y_0, z_0)$ is a primitive solution to (\ref{singleequation})
modulo~$p^k$ then  $(x^2_0, x_0 y_0, y_0^2, z_0)$
is a primitive solution to~(\ref{system2}) modulo~$p^k$.

If $(u_0, v_0, w_0, z_0)$ is a primitive solution
to~(\ref{system2}) modulo~$p^k$,
then $(u_0, v_0, z_0 u_0)$ and $(v_0, w_0, z_0 w_0)$
are both solution to (\ref{singleequation})
modulo~$p^k$. 
We claim that at least one of $u_0, v_0$ or $w_0$ must be prime to~$p$.
Otherwise, by assumption $z_0$ is prime to~$p$, and
since $d z_0^2\equiv a u_0^2 + b v_0^2 + c w_0^2$ modulo $p^k$,
it follows that $p^2 \mid d$, a contradiction.
With this claim, we see that at least one of the solutions is primitive.
\end{proof}


\section*{Appendix A: The p-adic integers and Hensel's lemma} 

In Section~\ref{s3} we defined $p$-local solvability
in terms of solvability modulo $p^k$. As alluded to in the introduction,
the usual definition of  $p$-local solvability refers to
$p$-adic solutions rather than solutions modulo $p^k$.
In this appendix we sketch an argument that our definition is equivalent to the $p$-adic 
definition. Then we introduce a basic tool, Hensel's lemma, which is
a standard method for finding $\Z_p$-solutions. Both the $p$-adic 
integers and Hensel's lemma will be used in Appendix~B where we discuss 
an important generalization of the
systems considered above. This appendix and the next are designed for readers 
with some familiarity with the $p$-adic numbers.

We begin with a quick review of the $p$-adic numbers.
A $p$-adic integer is a sequence $(a_1, a_2, a_3, \ldots)$
with the following properties for each $k$: (i) $a_k\in \Z/ p^k \Z$, and (ii) the image of $a_{k+1}$
under the natural projection $\Z/ p^{k+1} \Z \rightarrow \Z / p^k \Z$
is equal to~$a_k$.
The set~$\Z_p$ of such sequences forms an integral domain with
addition and multiplication defined componentwise.
Its field of fractions is
denoted by~$\Q_p$.
There is a natural injective ring homomorphism $\Z \rightarrow \Z_p$
defined by sending $a$ to $(a_1, a_2, \ldots )$ where $a_k$ is
the image of $a$ in $\Z / p^k\Z$. Thus we can identify~$\Z$
with a subring of~$\Z_p$ and $\Q$ with a subfield of~$\Q_p$.
The multiplicative structure of $\Z_p$ is simple:
every
nonzero element is uniquely of the form $u p^m$ where $u$
is a unit in~$\Z_p$. The units of $\Z_p$ are the
elements $(a_1, a_2, \ldots)$ such that $a_1$ is a unit in~$\Z / p \Z$.
For an introduction
to the $p$-adic numbers with prerequisites similar to those of the
current paper see~\cite{Gouvea}.

\begin{prop}
For a system of homogeneous equations with coefficients in~$\Z$, the following
are equivalent:
\begin{enumerate}
\item The system has primitive solutions modulo $p^k$ for all $k$.
\item The system has a nontrivial $\Z_p$-solution.
\item The system has a nontrivial $\Q_p$-solution.
\end{enumerate}
\end{prop}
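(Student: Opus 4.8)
The plan is to prove the chain of implications $(2)\Rightarrow(3)$, $(3)\Rightarrow(1)$, and $(1)\Rightarrow(2)$. The first is essentially immediate: a nontrivial $\Z_p$-solution is a nontrivial $\Q_p$-solution, since $\Z_p\subseteq\Q_p$.

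For $(3)\Rightarrow(1)$, I would start with a nontrivial $\Q_p$-solution and first \emph{clear denominators} to get a nontrivial $\Z_p$-solution. Using the structure of $\Q_p^\times$ recalled just above the proposition — every nonzero element of $\Q_p$ is uniquely $up^m$ with $u\in\Z_p^\times$ and $m\in\Z$ — one multiplies the whole solution vector by $p^{-m}$ where $m$ is the minimum of the valuations of the coordinates; this makes all coordinates lie in $\Z_p$ with at least one a unit, i.e.\ a \emph{primitive} $\Z_p$-solution (homogeneity of each equation is what lets us rescale). Then, given such a primitive $\Z_p$-solution $(a^{(1)},\ldots,a^{(m)})$ with each $a^{(j)}=(a^{(j)}_1,a^{(j)}_2,\ldots)$, I would reduce modulo $p^k$: the $k$-th components $(a^{(1)}_k,\ldots,a^{(m)}_k)$ give an $m$-tuple in $\Z/p^k\Z$ that solves the congruences (since reduction $\Z_p\to\Z/p^k\Z$ is a ring homomorphism and the equations have $\Z$-coefficients), and it is primitive because some coordinate was a unit in $\Z_p$, hence a unit mod $p^k$. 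Lifting these components back to integers gives a primitive solution modulo $p^k$ for each $k$.

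The main work is $(1)\Rightarrow(2)$, and this is the step I expect to be the real obstacle, because one must assemble a coherent sequence out of solutions that need not be compatible a priori. For each $k$ we have a primitive solution $\mathbf{s}_k\in(\Z/p^k\Z)^m$, but $\mathbf{s}_{k+1}$ need not reduce to $\mathbf{s}_k$. The standard fix is a \emph{compactness / König's-lemma} argument: build a tree whose nodes at level $k$ are the primitive solutions mod $p^k$ that arise as the reduction mod $p^k$ of primitive solutions mod $p^n$ for infinitely many $n$ (equivalently, organize the reductions of all the $\mathbf{s}_n$, $n\ge k$, into a finitely-branching infinite tree). Since there are only finitely many residues mod each $p^k$, the set of solutions mod $p^k$ is finite, so by the pigeonhole principle infinitely many of the $\mathbf{s}_n$ reduce to a common value mod $p$; restricting to that infinite subfamily and repeating gives, inductively, a value $b_k\in\Z/p^k\Z$ such that infinitely many $\mathbf{s}_n$ reduce to $b_k$ and $b_{k+1}$ maps to $b_k$. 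One must check two things along the way: that each $b_k$ (being the reduction of an actual solution mod $p^n$, $n\ge k$) genuinely satisfies the congruences mod $p^k$; and that primitivity is preserved — here I would note that primitivity mod $p^n$ for $n\ge 1$ forces some coordinate to be nonzero mod $p$, so the limiting $b_1$ is nonzero mod $p$, hence the resulting $p$-adic point $\mathbf{b}=(b_1,b_2,\ldots)\in\Z_p^m$ has a coordinate that is a unit and in particular is nontrivial. Finally $\mathbf{b}$ is a $\Z_p$-solution because each equation, evaluated at $\mathbf{b}$, vanishes in every $\Z/p^k\Z$, hence is $0$ in $\Z_p=\varprojlim\Z/p^k\Z$. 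This completes the cycle and proves all three conditions equivalent.
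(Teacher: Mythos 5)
Your proposal is correct and follows essentially the same route as the paper: the easy implications are handled by rescaling by a power of $p$ and componentwise reduction, and the main step $(1)\Rightarrow(2)$ is the same compactness argument for assembling a coherent sequence out of a priori incompatible solutions. The only cosmetic difference is bookkeeping: you extract nested infinite subfamilies by pigeonhole (K\"onig's lemma), whereas the paper greedily selects at each level a primitive solution that is ``infinitely extendable,'' i.e., one that lifts to a primitive solution modulo $p^\lambda$ for every $\lambda \ge k$ --- two phrasings of the same idea.
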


\begin{proof}
The conditions $(2)$ and $(3)$ are equivalent since the system is homogeneous. 
Suppose $(2)$ holds with
 nontrivial $\Z_p$-solution $(x_1, \ldots, x_m)$. Let $p^\lambda$ be the 
largest power of $p$ dividing all the~$x_i$. By dividing each $x_i$ by $p^\lambda$ we can
assume that at least one coordinate is a unit in~$\Z_p$.
Write $x_i= (a_{i 1}, a_{i 2}, \ldots )$. Then, for each $k$, the 
$n$-tuple $(a_{1 k}, \ldots, a_{m k})$ yields
a solution modulo $p^k$. Since there is a coordinate~$x_i$ that is a $\Z_p$-unit,
the corresponding
$a_{ik}$ is a $\Z/ p^k \Z$-unit. So $(a_{1 k}, \ldots, a_{m k})$ is a primitive
solution modulo~$p^k$. We conclude that $(2)\Rightarrow(1)$.

Finally suppose that $(1)$ holds. 
Let $m$ be the number of variables.
To produce a $\Z_p$-solution it is sufficient to produce
 $\mathbf{c_k}=(c_{k 1}, \ldots, c_{k m}) \in\Z^m$
for each~$k$ such that
(i) $\mathbf{c_k}$ is a primitive solution modulo $p^k$
and (ii)~$\mathbf{c_{k+1}}$ is congruent (componentwise) to $\mathbf{c_k}$
modulo $p^k$.
To facilitate the construction, we also consider the condition (iii)
$\mathbf{c_k}$ is infinitely extendable in the following sense:
for all $\lambda\ge k$ there is a primitive solution
modulo~$p^\lambda$
congruent modulo $p^k$ to $\mathbf{c_k}$.

Let $\mathbf{c_1}$ be a primitive solution modulo~$p$ that satisfies (iii).
If no such solution exists then either there is no primitive solution modulo $p$ contradicting $(1)$,
or there are a finite number of modulo-$p$-distinct solutions but where each is not infinitely extendible.
By choosing $\lambda$ large enough, we get a modulus $p^\lambda$ with no 
primitive solutions, also contradicting $(1)$. Thus $\mathbf{c_1}$ exists.
Now suppose $\mathbf{c_1}, \ldots, \mathbf{c_u}$ have been selected for which
 (i) holds for $k \le u$,
(ii) holds for $k<u$, and 
(iii) holds for $k \le u$.
Choose $\mathbf{c_{u+1}}$ to be any primitive solution
modulo $p^{u+1}$  that reduces modulo $p^u$ to $\mathbf{c_u}$ and for which (iii) holds with $k=u+1$.
Such $\mathbf{c_{u+1}}$ exists: otherwise (iii) would fail for $\mathbf{c_u}$.
This construction yields a sequence $\mathbf{c_1}, \mathbf{c_2}, \mathbf{c_3}, \ldots $ 
satisfying (i), (ii), and~(iii) for all $k$. So $(1)\Rightarrow (2)$.
\end{proof}

\emph{Hensel's lemma} refers to a family of results that allows us to
``lift''  modulo $p^k$ solutions 
to $\Z_p$-solutions.
Here is a basic version for polynomials.

\begin{prop}\label{hensel}
Let $f \in \Z_p [T]$ be a polynomial
with derivative~$f'$. If $t \in \Z_p$ is
such that $f(t)\equiv 0$ modulo $p$ but $f'(t) \not \equiv 0$ modulo $p$,
then there is a unique $u \in \Z_p$ such that $f(u)=0$ and such that
$u \equiv t$ modulo $p$.
\end{prop}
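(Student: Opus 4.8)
The plan is to prove Hensel's lemma (Proposition~\ref{hensel}) by Newton's method: constructing the root $u$ as the limit of a Cauchy sequence in $\Z_p$, where each successive approximation $t_k$ satisfies $f(t_k)\equiv 0$ modulo $p^k$. First I would recall that $\Z_p$ is the inverse limit of the $\Z/p^k\Z$, so producing $u$ amounts to producing a coherent sequence $t_1, t_2, \ldots$ with $t_k\in\Z/p^k\Z$, $t_{k+1}\equiv t_k$ modulo $p^k$, and $f(t_k)\equiv 0$ modulo $p^k$; equivalently I can work with integer (or $\Z_p$) representatives $t_k$ with $f(t_k)\equiv 0 \pmod{p^k}$ and $t_{k+1}\equiv t_k \pmod{p^k}$. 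Set $t_1 = t$ (more precisely its image), which works by hypothesis.

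The inductive step is the heart of the argument. Suppose $t_k\in\Z_p$ with $f(t_k)\equiv 0$ modulo $p^k$; I seek $t_{k+1} = t_k + s p^k$ with $s\in\Z_p$. By the finite Taylor/binomial expansion of a polynomial, $f(t_k + s p^k) = f(t_k) + f'(t_k)\, s p^k + (\text{terms divisible by } p^{2k})$, so modulo $p^{k+1}$ (using $2k\ge k+1$ since $k\ge 1$) we get $f(t_k+sp^k)\equiv f(t_k) + f'(t_k) s p^k$. Writing $f(t_k) = c p^k$ with $c\in\Z_p$, I need $f'(t_k) s \equiv -c$ modulo $p$. Here I would note that $t_k\equiv t$ modulo $p$ (this is itself part of the induction, easily maintained since $t_{k+1}\equiv t_k$), so $f'(t_k)\equiv f'(t)\not\equiv 0$ modulo $p$, hence $f'(t_k)$ is a unit in $\Z_p$ (a $p$-adic integer is a unit iff its reduction mod $p$ is nonzero), and $s := -c\, f'(t_k)^{-1}$ works. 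This gives existence of the sequence and hence of $u = \lim t_k$ with $f(u) = 0$ and $u\equiv t$ modulo $p$; here one observes $f$ is continuous, or more simply that $f(u)\equiv f(t_k)\equiv 0$ modulo $p^k$ for every $k$, forcing $f(u)=0$ in $\Z_p$.

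For uniqueness, suppose $u, u'$ both satisfy $f(u)=f(u')=0$ and $u\equiv u'\equiv t$ modulo $p$. Let $p^\ell$ be the exact power of $p$ dividing $u'-u$ (if $u'\ne u$); writing $u' = u + w$ with $w = p^\ell\cdot(\text{unit})$, the Taylor expansion gives $0 = f(u') = f(u) + f'(u)w + w^2(\cdots) = f'(u)w + w^2(\cdots)$, so $f'(u) w \equiv -w^2(\cdots)$. Dividing by $w$, we find $f'(u)\equiv 0$ modulo $p^\ell$, in particular modulo $p$ — but $f'(u)\equiv f'(t)\not\equiv 0$ modulo $p$, a contradiction. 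Hence $u' = u$.

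I expect the main obstacle to be purely expository rather than mathematical: stating the finite Taylor expansion of a polynomial over $\Z_p$ cleanly (the identity $f(x+h) = f(x) + f'(x)h + h^2 g(x,h)$ for some polynomial $g$ with $\Z_p$-coefficients) and being careful that the ``error'' terms genuinely carry a factor $p^{2k}$, together with the bookkeeping that $t_k\equiv t$ modulo $p$ is preserved throughout so that $f'(t_k)$ stays a unit. None of this is deep, but it must be set up precisely; everything else is the standard Newton-iteration / contraction argument.
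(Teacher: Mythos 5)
Your proposal is correct, but there is nothing in the paper to compare it with: the paper states Proposition~\ref{hensel} without proof, treating it as a standard fact (pointing the reader to the literature on $p$-adic numbers), and only \emph{uses} it in Appendix~B. Your Newton-iteration argument is the classical proof and is complete in outline: the induction producing $t_k$ with $f(t_k)\equiv 0 \pmod{p^k}$ and $t_{k+1}\equiv t_k \pmod{p^k}$ is exactly right, the key points being the polynomial Taylor identity $f(x+h)=f(x)+f'(x)h+h^2g(x,h)$ with $g$ having $\Z_p$-coefficients (so the error term at step $k$ is divisible by $p^{2k}$, and $2k\ge k+1$), the fact that $t_k\equiv t \pmod p$ is preserved so that $f'(t_k)$ remains a unit of $\Z_p$ (consistent with the paper's description of the units of $\Z_p$), and the observation that the coherent sequence $(t_k)$ defines $u\in\Z_p$ with $u\equiv t_k \pmod{p^k}$, whence $f(u)\equiv 0 \pmod{p^k}$ for all $k$ and so $f(u)=0$. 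Your uniqueness argument is also sound; the one point worth making explicit when you write it up is that the exact power $p^\ell$ dividing $u'-u$ has $\ell\ge 1$ precisely because $u\equiv u'\equiv t \pmod p$, so the conclusion $f'(u)\equiv 0 \pmod{p^\ell}$ really does contradict $f'(u)\equiv f'(t)\not\equiv 0 \pmod p$. The expository issues you flag (setting up the Taylor expansion and the bookkeeping of congruences) are indeed the only things left to write out; there is no gap.
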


\begin{remark}
There are refinements
that deal with the case $f'(t) \equiv 0$ modulo $p$.
\end{remark}


\section*{Appendix B: connections to Elliptic Curves}

In this appendix we assume some familiarity with elliptic curves 
defined over~$\Q$.
Consider the system
\begin{equation}\label{ecappendix}
a U^2 + b V^2 + c W^2 = d Z^2, \qquad UW= V^2
\end{equation}
with $a, b, c, d \in \Z$ such that
 $a, c, d,$ and $b^2 - 4ac$ are nonzero.
Up to this point we have concentrated on the case $b=0$,
which is rich enough to yield 
simple counterexamples to the Hasse principle.
In general, (\ref{ecappendix})  defines a nonsingular projective curve of genus~$1$
given as the intersection of quadric surfaces.\footnote{This curve is
a double cover of the projective planar conic $a U^2 + b UW+ c W^2 = d Z^2$.
By counting ramification points of this cover, and using the Riemann-Hurwitz formula,
one can verify that the genus of the curve is indeed one.
The curve defined by~(\ref{ecappendix}) is an elliptic curve defined over~$\Q$
if and only if~(\ref{ecappendix}) possesses a nontrivial $\Z$-solution.}

Such genus-$1$ curves arise naturally in the $2$-descent procedure
used to find generators and the rank for the group of rational points~$E(\Q)$
of an elliptic curve~$E$.
The system   (\ref{ecappendix}) is adapted
to the case where $E$ is defined over $\Q$ and possesses at least one
$\Q$-rational 2-torsion point, and
the question of the existence of nontrivial  $\Z$-solutions of~(\ref{ecappendix}) 
plays an important role in
the $2$-descent  procedure. 

Another connection between system~(\ref{ecappendix}) and elliptic curves
occurs when (\ref{ecappendix}) is a
counterexample to the Hasse principle.
In that case,  (\ref{ecappendix}) represents an element of
order $2$  of the Tate--Shafarevich group $\sha_E$ of the elliptic
curve~$E$ defined by the equation
$
y^2 = x^3 - 2bd x^2 + (b^2-4ac)d^2 x.
$
For example, 
Lind and Reichardt's  counterexample,
which we studied in the form
$$
U^2 - 17  W^2 = 2 Z^2, \qquad UW= V^2,
$$
represents  an element of order~$2$ in
$\sha_E$ where $E$ is defined by 
$y^2 = x^3 - 2^4 \,17 x$ (which can be transformed into the form
$y^2 = x^3 - 17 x$).
The Tate--Shafarevich group is conjecturally finite, and is
tied to another important conjecture: the conjecture of Birch and
Swinnerton-Dyer.

Much of the study of local solvability considered in the main body of the paper extends to
the system~(\ref{ecappendix}).   For example, we can
generalize Corollary~\ref{plocalcor} as follows:

\begin{thm}\label{T2bx}
The system~\emph{(\ref{ecappendix})} is $p$-locally solvable
for all primes $p \nmid 2acd (b^2 - 4ac)$.
\end{thm}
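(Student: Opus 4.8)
The plan is to mimic the argument used for the $b=0$ case (Corollary~\ref{plocalcor}), which rested on two pillars: first, that the system has a nontrivial $\F_p$-solution whenever $p\nmid acd$ (Theorem~\ref{T1}), and second, that such a solution lifts to all powers of $p$ (Theorem~\ref{strongliftingtheorem}). For the general system~(\ref{ecappendix}) the hypothesis $p\nmid 2acd(b^2-4ac)$ is exactly what is needed to make both steps work. First I would establish the $\F_p$-analogue of Theorem~\ref{T1}: for $p$ an odd prime with $p\nmid acd(b^2-4ac)$, the system $aU^2+bV^2+cW^2=dZ^2$, $UW=V^2$ has a nontrivial $\F_p$-solution. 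After scaling to $d=1$, substituting $UW=V^2$ into the first equation and setting $U=W t$ (or more carefully parametrizing) reduces this to solving $aW^2+bW^2t+cW^2t^2$-type relations; cleaner is to note that on the locus $UW=V^2$ we may write $(U,V,W)=(s^2,st,t^2)$, so the first equation becomes $as^4+bs^2t^2+ct^4=dZ^2$, a quartic. One then parametrizes the conic $aX^2+bXY+cY^2=dZ^2$ (nonsingular since $b^2-4ac\ne 0$ in $\F_p$) using Lemma~\ref{qlemma}-style parametrization, obtaining polynomials $q_1,q_2,q_3$, and applies Lemma~\ref{PA} to find $t$ with $\big(\frac{q_1(t)q_2(t)}{p}\big)\ne -1$, making $q_1(t)q_2(t)$ a square — exactly the argument of Theorem~\ref{T1}, with $q_1q_2=V^2$ playing the role of the second equation. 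The nondegeneracy condition $b^2-4ac\not\equiv 0$ replaces the two distinct nonzero coefficients in the $b=0$ case to guarantee $q_1,q_2$ are not associates.

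Next I would promote this $\F_p$-solution to a strong solution modulo $p$ and lift it. One checks that the $\F_p$-solution produced can be taken with $aU\not\equiv 0$ or $cW\not\equiv 0$ or $dZ\not\equiv 0$ mod $p$ (immediate since $p\nmid acd$ and the solution is nontrivial, so some coordinate among $U,W,Z$ is a unit, hence some one of $aU,cW,dZ$ is nonzero). Then I would invoke a general-$b$ version of Theorem~\ref{strongliftingtheorem}: given a strong solution modulo $p$ with, say, $aU_0$ a unit, rescale so $U_0=1$, reduce the first equation (using $W\equiv V^2$) to $a+bV^2+cV^4\equiv dZ^2$, observe $a^{-1}(dZ^2-bV^2-cV^4)\equiv 1$ mod $p$, apply Proposition~\ref{henselpower} to write this as $m^4$ modulo $p^k$, and conclude $(m^2,mV,V^2,Z)$ is a strong solution modulo $p^k$. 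This is word-for-word the proof of Theorem~\ref{strongliftingtheorem} with the harmless extra term $bV^2$ carried along, so I would state it as a lemma and remark that the proof is identical.

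The main obstacle, and the only place the full strength of the hypothesis is used, is the $\F_p$-existence step: one must be sure the parametrization of $aX^2+bXY+cY^2=dZ^2$ behaves well, i.e.\ that the conic is smooth over $\F_p$ (needs $p\ne 2$ and $b^2-4ac\not\equiv 0 \pmod p$) and has an $\F_p$-point to start the parametrization (Lemma~\ref{conicpoint}, suitably generalized, or directly: a smooth conic over a finite field always has a point). A subtle point is checking that the resulting $q_1,q_2$ are not associates in $\F_p[T]$ so that Lemma~\ref{PA} applies — here the argument in Lemma~\ref{qlemma} needs the nondegeneracy of the quadratic form, which is precisely $b^2-4ac\ne 0$. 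Once the $\F_p$-point and the non-associate property are secured, everything else is the mechanical transcription of Sections~\ref{s3} and~\ref{s4}. I would therefore organize the proof as: (1) reduce to $d=1$ and odd $p$; (2) parametrize the conic, cite Lemma~\ref{PA}, produce a nontrivial $\F_p$-solution of the system; (3) note it is strong; (4) apply the $b$-general lifting lemma to conclude $p$-local solvability.
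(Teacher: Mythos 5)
Your first step (nontrivial $\F_p$-solutions when $p\nmid 2acd(b^2-4ac)$) matches the paper: the paper completes the square, applies Lemma~\ref{qlemma} to $aX^2+\bigl(c-\frac{b^2}{4a}\bigr)Y^2=Z^2$ (this is where $b^2-4ac\not\equiv 0$ enters), and then uses Lemma~\ref{PA} exactly as you describe. The gap is in your lifting step. Your proposed ``general-$b$ version of Theorem~\ref{strongliftingtheorem}'' does not work as stated: after normalizing $U_0=1$ and choosing $m$ with $a m^4\equiv dZ^2-bV^2-cV^4 \pmod{p^k}$, your candidate $(m^2,mV,V^2,Z)$ does satisfy $UW=V^2$, but the first equation evaluates to $am^4+b\,m^2V^2+cV^4$, whereas you only control $am^4+bV^2+cV^4$. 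The discrepancy $bV^2(m^2-1)$ need not vanish modulo $p^k$: $m$ is constrained only by $m^4\equiv 1\pmod p$, so $m^2-1$ may not even be divisible by $p$ (e.g.\ $m^2\equiv -1$), and even when it is, it is divisible by $p$ only, not $p^k$. The reason the trick works in Theorem~\ref{strongliftingtheorem} is precisely that with $b=0$ the rescaling $V\mapsto mV$ is invisible in the first equation; with the cross term present it is not ``harmless,'' and this is exactly the step that fails.

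Relatedly, your claim that the hypothesis $p\nmid b^2-4ac$ is used only in the $\F_p$-existence step is not correct for any repair along the paper's lines: the discriminant is needed again in the lift. The paper normalizes so that $w_0$ is a unit, sets $w=1$, $u\equiv v^2$, and splits into cases. If $z\not\equiv 0\pmod p$, one lifts $z$ by Hensel's lemma (Proposition~\ref{hensel}) applied to $dT^2-(av^4+bv^2+c)$ --- this case could also be handled by square-root extraction as in Proposition~\ref{henselpower}. But if $z\equiv 0\pmod p$, then $v$ is a root modulo $p$ of $f(T)=aT^4+bT^2+c$, and one needs Lemma~\ref{specialhenselx}: the condition $p\nmid 2ac(b^2-4ac)$ guarantees $f'(v)\not\equiv 0\pmod p$, so Hensel produces $t\in\Z_p$ with $f(t)=0$ and $(t^2,t,1,0)$ is a $\Z_p$-solution. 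This $z\equiv 0$ case is exactly the one your fourth-power-extraction scheme cannot reach, so your outline needs to be replaced (or supplemented) by a Hensel-type argument on the quartic, with the discriminant hypothesis doing real work there.
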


We end this appendix with the proof of the above theorem.
The idea is to first show that there is a solution modulo~$p$, and then use
Hensel's lemma to derive a $\Z_p$-solution.

One way to prove the existence of solutions modulo $p$
is to use a theorem of
F.~K.~Schmidt (also proved by Ch\^atelet) according to which any
smooth curve of genus $1$ defined over a finite field~$\F_q$
has an $\F_q$-rational point.
Similarly, one can appeal to the Riemann hypothesis for curves over a finite field,
proved by A.~Weil. In this appendix we instead provide an elementary proof.

\begin{lemma}\label{appendix.modplemma}
Let $p$ be an odd prime, and consider
\begin{equation*}
a U^2 + b V^2 + c W^2 = d Z^2, \qquad UW= V^2
\end{equation*}
with $a, b, c, d \in \F_p$, 
and with $a c d (b^2 - 4ac) \ne 0$.
This system  has a nontrivial $\F_p$-solution.
\end{lemma}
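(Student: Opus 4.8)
The plan is to follow the same strategy that worked in the $b=0$ case (Theorem~\ref{T1}), namely: parametrize the conic $aU^2 + bUW + cW^2 = dZ^2$, obtain a one-parameter family of solutions given by polynomials in $\F_p[T]$, and then use a Legendre-symbol counting argument to find a parameter value at which the extra condition $UW = V^2$ can be met. First I would reduce to the case $d = 1$ by scaling $a, b, c$ by $d^{-1}$ (permissible since $d \ne 0$ and $p$ is odd, so $d$ is invertible); note the hypothesis $acd(b^2-4ac) \ne 0$ guarantees $a, c \ne 0$ and the conic is nonsingular after this reduction. The conic $aU^2 + bUW + cW^2 = Z^2$ is not quite of the form handled by Lemma~\ref{qlemma}, but since $\Char \F_p \ne 2$ one can complete the square: writing $U' = U + (b/2a)W$ turns the quadratic form into $a U'^2 + ((4ac-b^2)/4a) W^2 = Z^2$, which has nonzero coefficients, so Lemma~\ref{conicpoint} provides a starting point and Lemma~\ref{qlemma} yields polynomials. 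Substituting back linearly, I get nonzero polynomials $p_1, p_2, p_3 \in \F_p[T]$, each of degree $\le 2$, with $a p_1^2 + b p_1 p_2 + c p_2^2 = p_3^2$, where $U = p_1$, $W = p_2$, $Z = p_3$ traces out conic solutions.

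Next I would analyze $p_1$ and $p_2$. The key point I need is that $p_1$ and $p_2$ are \emph{not associates} in $\F_p[T]$; this should follow from the argument in Lemma~\ref{qlemma} (if they were associates, then $a p_1^2 + b p_1 p_2 + c p_2^2$ would be a constant times $p_1^2$, forcing $p_3$ to be a constant times $p_1$ as well, and one checks this is incompatible with the linear terms produced by the parametrization — or more robustly, associated $p_1, p_2$ would mean the conic is degenerate, contradicting $b^2 - 4ac \ne 0$). Given that $p_1$ and $p_2$ are nonzero and non-associate, Lemma~\ref{PA} says it is \emph{not} the case that $\big(\frac{p_1(t)}{p}\big) = -\big(\frac{p_2(t)}{p}\big)$ for all $t \in \F_p$. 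Hence there exists $t \in \F_p$ with $\big(\frac{p_1(t)}{p}\big) \ne -\big(\frac{p_2(t)}{p}\big)$, which means $p_1(t)$ and $p_2(t)$ are not both zero and their product $p_1(t) p_2(t)$ is a square (its Legendre symbol is $0$ or $1$). Choosing $v$ with $v^2 = p_1(t) p_2(t)$, the tuple $U = p_1(t)$, $V = v$, $W = p_2(t)$, $Z = p_3(t)$ satisfies both equations and is nontrivial since $U, W$ are not both zero. This is exactly the proof of Theorem~\ref{T1}, now with $b \ne 0$ absorbed by completing the square.

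The main obstacle I anticipate is the non-associate claim for $p_1$ and $p_2$ after the change of variables: completing the square and substituting back is a linear change $U = U' - (b/2a)W$, so $p_1$ is a fixed $\F_p$-linear combination of the polynomials $q_1, q_2$ coming from Lemma~\ref{qlemma}, while $p_2 = q_2$. I would need to check that this linear combination does not make $p_1$ proportional to $p_2$ — equivalently, that $q_1$ is not an $\F_p$-multiple of $q_2$, which is part of the conclusion of Lemma~\ref{qlemma} (no two of $q_1, q_2, q_3$ are associates when $\Char F \ne 2$). Since the substitution only mixes $q_1$ with $q_2$ and leaves $q_2$ alone, $p_1 \propto p_2$ would force $q_1 \propto q_2$, which is false; so the claim goes through. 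A secondary, purely bookkeeping point is confirming $\deg p_i \le 2$ and that $p_3 \ne 0$, both immediate from Lemma~\ref{qlemma}. I would also remark that everything here uses only $p$ odd and the nondegeneracy $b^2 - 4ac \ne 0$, matching the hypotheses.
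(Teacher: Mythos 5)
Your proposal is correct and follows essentially the same route as the paper: reduce to $d=1$, complete the square to apply Lemma~\ref{qlemma} to $aX^2 + \bigl(c - \tfrac{b^2}{4a}\bigr)Y^2 = Z^2$, note that the back-substituted polynomial $q_1 - \tfrac{b}{2a}q_2$ is nonzero and not an associate of $q_2$ (since $q_1$ and $q_2$ are not associates), and then invoke Lemma~\ref{PA} to find a parameter value where the product is a square. The only point to make explicit is that the new coefficient $\tfrac{4ac-b^2}{4a}$ is nonzero, which your hypothesis $b^2 - 4ac \ne 0$ supplies, exactly as in the paper.
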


\begin{proof}
By multiplying the first equation by $d^{-1}$ we reduce to the case where $d=1$.
We now use the technique of completing the square on
$f(X, Y) = a X^2 + bXY + c Y^2.$
Let $q_1, q_2, q_3 \in \F_p[T]$ be as in
Lemma \ref{qlemma}
applied to $a X^2 + \left( c - \frac{b^2} {4a} \right) Y^2 = Z^2$.
Thus $a q_1^2 + \left( c - \frac{b^2} {4a} \right) q_2^2 = q^2_3$. So,
if $q'_1 =  q_1 - \frac{b}{2a} q_2$ then
$$
\hbox{$f(q'_1, q_2)=
a\left(  q_1 - \frac{b}{2a} q_2 \right)^2 +
b \left(  q_1 - \frac{b}{2a} q_2 \right)q_2 + c q_2^2
=
a q_1^2 +\left(c - \frac{b^2} {4 a}\right) q_2^2
=
q_3^2$.}
$$

Since $q_1$ and $q_2$ are not associates, $q'_1$ is nonzero, and
$q'_1$ and $q_2$ cannot be associates. So, by Lemma~\ref{PA},
there is a $t \in \F_p$ such that
$\big(\frac{q'_1 (t)}{p}\big) \ne - \big(\frac{q_2(t)}{p}\big)$.
So $q'_1(t) q_2(t) = s^2$ for some $s\in\F_p$,
and $q'_1(t)$ and  $q_2(t)$ are not both~$0$.
In particular, $\bigl(q_1'(t), s, q_2(t), q_3(t) \bigr)$ is a 
nontrivial solution.
\end{proof}

The above gives us modulo-$p$ solutions to (\ref{ecappendix}). To
use this to produce $\Z_p$-solutions we need a
special case of Hensel's lemma:

\begin{lemma}\label{specialhenselx}
Let $p$ be a prime not
dividing $2 a c (b^2-4ac)$ where $a, b,c \in\Z$.
If $f(T)=a T^4 + b T^2 + c$ has a root modulo $p$,
then $f$ has a root in $\Z_p$.
\end{lemma}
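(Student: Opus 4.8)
The plan is to apply the basic version of Hensel's lemma, Proposition~\ref{hensel}, to the polynomial $f(T) = aT^4 + bT^2 + c \in \Z_p[T]$. The hypothesis gives us a root $t \in \Z_p$ with $f(t) \equiv 0$ modulo $p$, so the only thing to check is the nondegeneracy condition $f'(t) \not\equiv 0$ modulo $p$. Computing, $f'(T) = 4aT^3 + 2bT = 2T(2aT^2 + b)$, so I must rule out $t \equiv 0$, $2aT^2 + b \equiv 0$ at $t$, and (when $p = 2$) the factor $2$.

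First I would dispose of $p = 2$ as a degenerate-looking case: if $p \nmid 2ac(b^2-4ac)$ then in particular $p \ne 2$, so $p$ is odd and the factor $2$ in $f'$ is a unit in $\Z_p$. Next I would show $t \not\equiv 0$ modulo $p$: if $t \equiv 0$ then $f(t) \equiv c \equiv 0$ modulo $p$, contradicting $p \nmid c$ (since $p \nmid ac$). Finally I would show $2a t^2 + b \not\equiv 0$ modulo $p$. Suppose otherwise, so $t^2 \equiv -b/(2a)$ modulo $p$ (using that $2a$ is a unit). Substituting into $f(t) \equiv 0$: $a t^4 + b t^2 + c \equiv a(t^2)^2 + b t^2 + c \equiv a \cdot \frac{b^2}{4a^2} - \frac{b^2}{2a} + c \equiv \frac{b^2}{4a} - \frac{b^2}{2a} + c \equiv -\frac{b^2}{4a} + c \equiv \frac{4ac - b^2}{4a} \pmod p$. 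Since $4a$ is a unit modulo $p$, this forces $b^2 - 4ac \equiv 0$ modulo $p$, contradicting $p \nmid (b^2 - 4ac)$. Hence $f'(t) = 2t(2at^2 + b)$ is a product of units in $\Z/p\Z$, so $f'(t) \not\equiv 0$ modulo $p$, and Proposition~\ref{hensel} produces the desired root in $\Z_p$.

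The argument is entirely routine; there is no real obstacle beyond correctly bookkeeping which of $2$, $a$, $c$, and $b^2 - 4ac$ is used to eliminate each bad case, and noting that the completing-the-square manipulation that appears in the computation of $f(t)$ makes sense precisely because $2a$ (equivalently $4a$) is invertible modulo the odd prime $p$. The only subtlety worth a sentence is that the relation $t^2 \equiv -b/(2a)$ should be manipulated as an identity in $\Z/p\Z$ (or lifted to $\Z_p$), which is legitimate since all the denominators appearing are prime to $p$.
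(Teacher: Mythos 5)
Your proposal is correct and follows essentially the same route as the paper: both verify the Hensel nondegeneracy condition $f'(t)\not\equiv 0 \pmod p$ by using $p\nmid c$ to rule out $t\equiv 0$, using $p$ odd to discard the factor $2$, and using the hypothesis $p\nmid (b^2-4ac)$ to exclude $2at^2+b\equiv 0$ (the paper phrases this by multiplying $f(t)\equiv 0$ by $-4a$ rather than dividing, but it is the same computation), and then both invoke Proposition~\ref{hensel}.
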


\begin{proof}
Let $t\in\Z$ be such that $f(t) \equiv 0$ modulo $p$.
Suppose $f'(t) \equiv 0$ modulo~$p$
where $f' = 4 a T^3 + 2 b T$.
In other words, $- 4 a t^3 \equiv 2 b t$ modulo~$p$.
Observe that $t\not\equiv 0$ modulo $p$ since $f(0)=c$ and $p\nmid c$.
Also $p$ is odd.
Thus $ - 2 a t^2 \equiv b$ modulo $p$.
So
$$
0\equiv -  (4 a) a t^4 - (4 a) b t^2 - (4 a) c \equiv  - b^2 + 2b^2 - 4ac
\equiv  b^2 - 4ac \pmod p
$$
contradicting our assumption. Thus $f'(t) \not\equiv 0$ modulo $p$.
The result now follows from Hensel's lemma (Proposition~\ref{hensel}).
\end{proof}

\begin{proof}[Proof of Theorem~\ref{T2bx}.]
Let $(u_0, v_0, w_0, z_0)$ be a 
primitive solution to the system (\ref{ecappendix}) modulo~$p$ 
(Lemma~\ref{appendix.modplemma}).
If $p$ divides both $u_0$ and $w_0$, it must divide $v_0$ and $z_0$ as well,
contradicting the assumption that the solution is primitive. By symmetry between $U$ and $W$
we can assume that $w_0$ is prime to $p$. In particular $w_0$ is a unit in $\Z_p$.

Let $u = u_0 \, w_0^{-1},  \; v = v_0 \, w_0^{-1},$ and $z = z_0\,  w_0^{-1}$ in $\Z_p$. 
Then $(u, v, 1, z)$ also solves (\ref{ecappendix}) modulo~$p$. 
So $u \equiv v^2$, and hence $a v^4 + b v^2 +  c \equiv dz^2$, modulo $p$.

We first consider the case where $z \equiv 0$ modulo $p$.  In this case
$v$ is a root, modulo~$p$, of the polynomial 
$f(T) = a T^4 + b T^2 + c$.
By Lemma~\ref{specialhenselx}, there is a $t \in\Z_p$ such that $f(t) = 0$.
Observe that $\left( t^2, t, 1, 0\right)$ is a $\Z_p$-solution to (\ref{ecappendix}).

Now suppose $z\not \equiv 0$ modulo $p$. Then $z$ is a root, modulo~$p$, of the polynomial 
$f(T) = d\, T^2 - (a v^4 + b v^2 + c)$. Observe that $f'(z) = 2 d z \not\equiv 0$ modulo $p$.
By Hensel's lemma (Proposition~\ref{hensel}) there is a $t\in \Z_p$ so that
$f(t)=0$. In particular $( v^2, v, 1, t)$ is a $\Z_p$-solution to (\ref{ecappendix}).
\end{proof}

\begin{remark}
For more on elliptic curves over~$\Q$, consult~\cite{Cassels},
\cite{ST}, and \cite{Sil}. At a more advanced level see~\cite{Maz}.
\end{remark}



\begin{thebibliography}{99}

\bibitem{Cassels} J.~W.~S.~Cassels,
{\em Lectures on Elliptic Curves},
Cambridge University Press 1991
%

\bibitem{CR} J.E. Cremona, D. Rusin,
{\em Efficient solution of rational conics},
Math. Comp. {\bf 72} (2003), no. 243, 1417--1441
%

\bibitem{Davenport} H.~Davenport,
{\em The Higher Arithmetic: An Introduction
to the Theory of Numbers},
Dover 1983 (reprint of the original edition of 1952)
%

\bibitem{Gauss} C.F. Gauss,
{\em Disquisitiones Arithmeticae},
1801
%

\bibitem{Gouvea} F.~Q.~Gouv\^ea,
{\em $p$-adic Numbers},
Springer-Verlag 1997 (second edition)
%

\bibitem{IR} K. Ireland, M. Rosen,
{\em A Classical Introduction to Modern Number Theory},
Springer-Verlag 1990 (second edition)
%

\bibitem{KR} J. Kraft, M. Rosen,
{\em Eisenstein reciprocity and $n$-th power residues},
Amer.~Math.~Monthly {\bf 88} (1981), 269--270
%

\bibitem{LRL} F. Lemmermeyer,
{\em Reciprocity Laws: From Euler to Eisenstein},
Springer-Verlag 2000
%

\bibitem{Lind} C.-E. Lind,
{\em Untersuchungen \"uber die rationalen Punkte der ebenen
kubischen Kurven vom Geschlecht Eins},
Diss. Univ. Uppsala 1940
%

\bibitem{Maz} B. Mazur,
{\em On the passage from local to global in number theory},
Bull. Amer. Math. Soc. (N.S.) {\bf 29} (1993), no. 1, 14--50
%

\bibitem{Rei} H. Reichardt,
{\em Einige im Kleinen \"uberall l{\"o}sbare, im Gro{\ss}en unl{\"o}sbare
   diophantische Glei\-chungen},
J. Reine Angew. Math. {\bf 184} (1942), 12--18
%

\bibitem{Selmer} E. Selmer,
{\em The diophantine equation $ax^3+by^3+cz^3=0$},
Acta Math. {\bf 85} (1951), 203--362
%

\bibitem{Ser} J.-P. Serre,
{\em A Course in Arithmetic},
Springer-Verlag 1973
%

\bibitem{Sil} J.~H.~Silverman,
{\em The Arithmetic of Elliptic Curves},
Springer-Verlag 1986
%

\bibitem{ST} J.~H.~Silverman, J.~Tate,
{\em Rational Points on Elliptic Curves},
Springer-Verlag 1992
%

\bibitem{Weil} A. Weil,
{\em Number Theory: An Approach through History},
Birkh\"auser 1984
%

\end{thebibliography}
\end{document}